\numberwithin{equation}{section}
\newtheorem{Theorem}{Theorem}[section]
\newtheorem{Proposition}[Theorem]{Proposition}
\newtheorem{Lemma}[Theorem]{Lemma}
\newtheorem{Corollary}[Theorem]{Corollary}
\newtheorem{Definition}[Theorem]{Definition}
\newtheorem{Remark}[Theorem]{Remark}
\numberwithin{equation}{section}
\begin{document}

\baselineskip=16pt

\title{On the moduli space of coherent systems of type $(2,c_1,c_2,2)$ on projective plane.}

\author{O. Mata-Guti\'errez}

\address{Departamento de Matem\'aticas\newline
Centro Universitario de Ciencias Exactas e Ingenier\'ias \newline
Universidad de Guadalajara\\
\newline  Avenida Revoluci\'on 1500\\ Guadalajara, Jalisco, M\'exico.}
\email{osbaldo.mata@academicos.udg.mx}

\author{L. Roa-Leguizam\'on}

\address{Universidad de Los Andes, Departamento de matem\'aticas \newline Carrera 1 \#18A-12, 111 711   \newline Bogota, Colombia.}
\email{leonardo.roa@cimat.mx}

\author{H. Torres-L\'opez}

\address{CONACyT - U. A. Matem\'aticas, U. Aut\'onoma de
Zacatecas
\newline  Calzada Solidaridad entronque Paseo a la
Bufa, \newline C.P. 98000, Zacatecas, Zac. M\'exico.}

\email{hugo@cimat.mx}

\thanks{}

\subjclass[2010]{}

\keywords{moduli space of coherent systems, Segre Invariant.}

\date{\today}

\maketitle

\begin{abstract}
We study the moduli space of coherent systems in $\mathbb{P}^2$ using the Segre invariant. We obtain necessary conditions for the existence of $\alpha$-semistable coherent systems $(E,V)$ of type $(2,c_1,c_2, k),$ with $k \geq 2$. Afterwards, we give numerical conditions to the nonemptiness of the moduli space
and compute the critical values depending of the Chern
classes. Finally, we give some topological properties of the flips.
\end{abstract}

\section{Introduction}

Let $X$ be a smooth and irreducible algebraic surface. A coherent system on $X$ of type
$(n,c_1,c_2, k)$ is a pair $(\mathcal{E},V)$ where
$\mathcal{E}$ is a coherent sheaf of rank $n$ on $X$
with $c_i= c_i(\mathcal{E})\in H^{2i}(X,\mathbb{Z})$, $i=1,2$ and
$V\subseteq H^{0}(X,\mathcal{E})$ a linear subspace of
dimension $k$. In \cite{Potier}, Le Potier introduces the notion of coherent systems on a projective variety
and define  semistability. 
Subsequently,  by fixing a polynomial $\alpha$ and the
type $(n,c_1,c_2, k)$, Le Potier constructs the 
moduli space $\mathbf{S}_{\alpha}(n,c_1,c_2, k)$
of $\alpha$-semistable coherent systems, which is a
projective variety. Later, Min He provided a
smoothness criterion, studied critical values, and 
determined  topological and geometric properties 
of the moduli space $\mathbf{S}_{\alpha}(n,c_1,c_2, k)$
(see \cite{Min}).  
Both of these constructions were inspired by Simpson's
construction of the moduli space of coherent sheaves \cite{Simpson}.

The geometry of the moduli space
$\mathbf{S}_{\alpha}(n, c_1 , c_2, k)$ depends on $\alpha$.
However, it is known that there exists a finite number of values 
$\alpha_{1},\alpha_{2},\ldots , \alpha_{r}$ 
called  critical values such that if $\alpha$ and $\alpha'$
are in the interval $(\alpha_{i}, \alpha_{i+1})$,
then $\mathbf{S}_{\alpha}(n,c_1,c_2, k)= \mathbf{S}_{\alpha'}(n,c_1,c_2, k)$.  On the other hand, if $\alpha' \leq \alpha_{i} < \alpha$,
then $\mathbf{S}_{\alpha}(n,c_1,c_2, k)$ and $\mathbf{S}_{\alpha'}(n,c_1,c_2, k)$
are birational. The difference between consecutive moduli spaces $\mathbf{S}_{\alpha}(2,c_1,c_2,1)$ was studied by Min He (see \cite{Min}); however, little is known about the moduli space of $\alpha$-stable coherent systems of type 
$\mathbf{S}_{\alpha}(2,c_1,c_2,k)$  with $k> 1$.

In \cite{Min}, Min He studied topological and geometric properties of the moduli space of
coherent systems on the projective plane of type $(2,c_1,c_2, 1)$
and showed that the moduli space $\mathbf{S}_{\alpha}(2,c_1,c_2,1)$ is projective and irreducible.For this, 
 Min He used the Bertini's  scheme as an essential tool.
He determined the subschemes $\Sigma_{+}$ and $\Sigma_{-}$ of the moduli space
$\mathbf{S}_{\alpha_{i}}(n,c_1,c_2, k)$ where $\Sigma_{+}$
consists of points in $\mathbf{S}_{\alpha_{i}}(n,c_1,c_2, k)$
which are not  $\alpha$-stable if $\alpha< \alpha_i$ and 
$\Sigma_{-}$ consists of points in 
$\mathbf{S}_{\alpha_{i-1}}(n,c_1,c_2, k)$
that are not stable $\alpha$ if $\alpha> \alpha_i$.

When $X$ is a complex smooth and irreducible algebraic curve, the moduli space
$\mathbf{S}_{\alpha}(n,c_1,k)$ has been extensively studied
by several authors (see, for instance, \cite{BGMMN}, 
\cite{Bradlow},\cite{Brambila},\cite{New}).  
Much is known about topological and geometric properties
of the moduli space $\mathbf{S}_{\alpha}(n,c_1,k)$ when $k \leq n$. 
For the case $k > n$ little is known and there are several open
questions related to the structure of the moduli space (see \cite{New} for more details).

The aim of this paper is to study the moduli space of
coherent systems of type $(2,c_1,c_2,2)$ on the
projective plane $\mathbb{P}^2$. 
We will focus on coherent systems 
$(\mathcal{E},V)$  where $\mathcal{E}:=E$ is
a vector bundle.  According to the properties of GIT, 
it is known that the conditions for being locally free
and stable are open. 
Therefore,  we consider the open subscheme
$\mathbf{M}_{\alpha}(n,c_1,c_2, k)\subseteq \mathbf{S}_{\alpha}(n,c_1,c_2,k)$
which denotes the moduli space of $\alpha$-stable 
coherent systems $(E,V)$
of type $(n,c_1,c_2, k)$ on $\mathbb{P}^{2}$.   
Similarly, we denote by
$\widetilde{\mathbf{M}}_{X,\alpha}(n,c_1, c_2, k) \subset \mathbf{S}_{X,\alpha}(n,c_1, c_2, k)$
the open subscheme of $\alpha$-semistable coherent systems $(E,V)$.

Restricting us to this kind of coherent systems we will
use Cayley-Bacharach property 
(see Theorem \ref{Cayley-Bacharach}) 
and the Segre invariant for rank two vector bundles on
$\mathbb{P}^2$ to get topological and geometric properties
of the open subscheme $\mathbf{M}_{\alpha}(2,c_1,c_2,2) \subset \mathbf{S}_{\alpha}(2,c_1,c_2,2)$. We also describe the different
critical values and the geometry of flips.  

According to Theorem \ref{invariantedesegre} and Theorem \ref{invariantedesegrenegativo}  any rank two vector bundle
$E$ on $\mathbb{P}^2,$ with Chern classes $c_1$ and $c_2$, 
can be fit in an exact sequence
\begin{equation}\label{ext}
    0\rightarrow \mathcal{O}_{\mathbb{P}^2}(r-s)\rightarrow E\rightarrow \mathcal{O}_{\mathbb{P}^2}(r+s-t)\otimes I_Z\rightarrow 0,
\end{equation} 
where $\mathcal{O}_{\mathbb{P}^2}(r-s)$ is the maximal subbundle of $E$.
By fixing a polynomial $\alpha(m)=am+b\in \mathbb{Q}[m]$, 
we will use the extension (\ref{ext}) to study coherent systems
$(E,V)$ of type $(2,c_1,c_2,2)$ and describe the
$\alpha$-stability of $(E,V)$ in terms of its maximal
coherent subsystem (see Definition \ref{Def-maximalsub} and Corollary \ref{estabilidadconmaximal}). Also, we obtain necessary conditions for the existence of $\alpha$-stable coherent systems $(E,V)$ of type $(2,c_1,c_2, k),$ with $k\geq 2.$ The result is presented as follows.

\begin{Theorem}(Theorem \ref{nonemptinesspar})
 Let $\alpha(m)=am+b\in \mathbb{Q}[m]$ with $a\in \mathbb{Q}_{+}$ and let $c_1=2r-t>0$, $t \in \{0,1\}$. 
Then  $\mathbf{M}_{\alpha}(2,2r-t,c_2,2)\neq \emptyset$ if one of the following conditions is satisfied:
\begin{enumerate}
    \item If $c_2\ge r^2-t+2.$  
    \item If $r^2-rt\leq c_2 <r^2-t+2 ,$ $r\geq 2+t$ and $a>\frac{t}{2}.$
    \item  If $s_0<0$ is an integer such that $t+2-r\leq s_0,$ $\frac{t}{2}-s_0< a$ and $r^2-s_0^2+(s_0-r)t\leq c_2.$ 

\item If $c_2=2r-t, 2r-t-1$ and $a>r-1-\frac{t}{2}$.

\end{enumerate}
\end{Theorem}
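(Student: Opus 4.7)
The plan is to prove each of the four cases by exhibiting an explicit $\alpha$-stable coherent system $(E,V)$ of type $(2, 2r-t, c_2, 2)$, built from the extension (\ref{ext}). By Corollary \ref{estabilidadconmaximal}, $\alpha$-stability of $(E,V)$ can be tested via a single inequality against the maximal coherent subsystem, so the strategy reduces to: (i) choose an integer $s$ and a zero-dimensional subscheme $Z \subset \mathbb{P}^2$ of length $\ell(Z) = c_2 - r^2 + s^2 + (r-s)t$ so that $E$ has the prescribed Chern classes; (ii) produce $V \subseteq H^0(E)$ of dimension two with controlled intersection $W := V \cap H^0(\mathcal{O}_{\mathbb{P}^2}(r-s))$; (iii) verify the resulting numerical inequality.

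For each choice of $s$, I would take a general extension class in $\mathrm{Ext}^1(\mathcal{O}_{\mathbb{P}^2}(r+s-t) \otimes I_Z,\, \mathcal{O}_{\mathbb{P}^2}(r-s))$ so that $\mathcal{O}_{\mathbb{P}^2}(r-s)$ becomes the maximal subbundle of $E$; existence is guaranteed by Theorems \ref{invariantedesegre} and \ref{invariantedesegrenegativo}. A direct Hilbert polynomial computation gives $2 P_{\mathcal{O}(r-s)}(m) - P_E(m) = (t - 2s)m + \mathrm{const}$, so the $\alpha$-stability inequality against $(\mathcal{O}_{\mathbb{P}^2}(r-s), W)$ reads asymptotically as $a > t/2 - s$ when $\dim W = 0$, as $s > t/2$ when $\dim W = 1$, and as $a < s - t/2$ when $\dim W = 2$. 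For all four cases I would choose $V$ with $\dim W = 0$, so the key inequality becomes $a > t/2 - s$.

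Cases (1), (2) and (3) then follow by selecting $s$ optimally in each regime. In case (1), the hypothesis $c_2 \geq r^2 - t + 2$ is generous enough that $s$ may be taken $\geq 1$, making $a > t/2 - s$ automatic for every $a > 0$. In case (2), the tighter range $r^2 - rt \leq c_2 < r^2 - t + 2$ forces $s = 0$ (larger values of $s$ either make the relevant $\mathrm{Ext}^1$ vanish, returning the split bundle, or push the maximal subbundle up to $\mathcal{O}(r)$), giving exactly $a > t/2$. In case (3), one takes $s = s_0 < 0$ via Theorem \ref{invariantedesegrenegativo}, producing $a > t/2 - s_0$; the side hypothesis $t + 2 - r \leq s_0$ ensures $r + s_0 - t \geq 2$, so that $\mathcal{O}_{\mathbb{P}^2}(r + s_0 - t)$ carries enough global sections to accommodate a $V$ with $\dim W = 0$.

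Case (4) is the main obstacle, corresponding to the borderline value $s_0 = t - r + 1$, which sits just outside the range of case (3). Then $r + s_0 - t = 1$, so the quotient is $\mathcal{O}_{\mathbb{P}^2}(1) \otimes I_Z$, the stability threshold becomes $a > t/2 - s_0 = r - 1 - t/2$, and the requirement $h^0(\mathcal{O}_{\mathbb{P}^2}(1) \otimes I_Z) \geq 2$ (needed for a two-dimensional $V$ with $\dim W = 0$) forces $\ell(Z) \leq 1$, so that $c_2 \in \{2r - t - 1,\, 2r - t\}$ as in the hypothesis. The delicate step is verifying that two independent sections of $\mathcal{O}_{\mathbb{P}^2}(1) \otimes I_Z$ actually lift to $H^0(E)$ despite the tight cohomology; this is controlled by the connecting map in the long exact sequence of (\ref{ext}), and the Cayley-Bacharach property (Theorem \ref{Cayley-Bacharach}) is the natural tool to certify such a lifting and complete the construction.
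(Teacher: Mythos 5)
Your overall strategy --- realize $E$ as an extension of $\mathcal{O}_{\mathbb{P}^2}(r+s-t)\otimes I_{Z}$ by $\mathcal{O}_{\mathbb{P}^2}(r-s)$ for a suitable $s$, then test $\alpha$-stability only against the maximal coherent subsystem via Corollary \ref{estabilidadconmaximal} --- is exactly the paper's, and your numerical thresholds in cases (2)--(4) agree with its computations. But there is a genuine gap: you never arrange for the quotient $\mathcal{O}_{\mathbb{P}^2}(r+s-t)\otimes I_{Z}$ to actually carry the sections your choice of $V$ requires. Taking ``a general extension class'' over a $Z$ of the prescribed length $\ell(Z)=c_2-r^2+s^2+t(r-s)$ is not enough: when $c_2$ is large, $\ell(Z)$ exceeds $h^0(\mathbb{P}^2,\mathcal{O}_{\mathbb{P}^2}(r+s-t))$ and a general $Z$ gives $h^0(\mathbb{P}^2,\mathcal{O}_{\mathbb{P}^2}(r+s-t)\otimes I_Z)=0$. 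Then $H^0(\mathbb{P}^2,E)=H^0(\mathbb{P}^2,\mathcal{O}_{\mathbb{P}^2}(r-s))$, every two-dimensional $V$ has $\dim W_{max}=2$, and the relevant inequality becomes $a<s-\frac{t}{2}$ --- the opposite of what you claim, so in case (1) stability is not automatic for all $a>0$. This is precisely why the paper places $Z$ (or all but one point of it) on a fixed line $\mathbb{L}$: sections of $\mathcal{O}_{\mathbb{P}^2}(r+s-t)$ vanishing on $Z\subset\mathbb{L}$ contain $\ell_{\mathbb{L}}\cdot H^0(\mathbb{P}^2,\mathcal{O}_{\mathbb{P}^2}(r+s-t-1))$, which is where the hypotheses $r\geq 2+t$ in (2) and $t+2-r\leq s_0$ in (3) actually enter; and in case (1), where $Z$ must in addition not lie on a curve of degree $2s-1-t$ so that $\mathcal{O}_{\mathbb{P}^2}(r-s)$ stays maximal, one extra point off $\mathbb{L}$ is adjoined.

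Two smaller points. In case (1) the paper takes $\dim W_{max}=1$ (one section from the sub-line-bundle, one from the quotient), so only one quotient section is needed and the slope difference $1-\frac{t}{2}>0$ holds for every $a>0$; your choice $\dim W_{max}=0$ demands two quotient sections and is strictly harder to arrange, though it can be salvaged with the same line-plus-point configuration. And in case (4) the lifting of sections of $\mathcal{O}_{\mathbb{P}^2}(1)\otimes I_{Z}$ to $H^0(\mathbb{P}^2,E)$ is not delicate at all: the obstruction lives in $H^1(\mathbb{P}^2,\mathcal{O}_{\mathbb{P}^2}(2r-1-t))=0$, so every section lifts. Cayley--Bacharach is used only to produce the locally free extension $E$, not to lift sections.
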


We provide an explicit description of the critical values in Theorem \ref{criticalThm1}, Theorem \ref{criticalThm2} and Theorem \ref{criticalThm3} whenever the first Chern class is even. In Theorem \ref{criticalThmodd} we show the description when the first Chern class is odd.  Finally, we describe the differences between the consecutive moduli space $\mathbf{M}_\alpha(2,c_1,c_2, 2)$
by studying the sets $\Sigma_+$ and $\Sigma_-$, and we obtain the following result;

\begin{Theorem}(Theorem \ref{ThmFlip})
There exists a vector bundle $W^{-}_t$ over $G\times H_t$  
such that $\Sigma^{-}_{i,t}=\mathbb{P}(W^{-}_t)$. 
In particular, $\Sigma^{-}_{i,t}$  is irreducible of dimension 
\begin{eqnarray*}
3c_2-3r^2+s^2+3s+2h^0(\mathbb{P}^2,\mathcal{O}_{\mathbb{P}^2}(r-s))-6+\frac{t}{2}(6r-2s-3-t),
\end{eqnarray*}
where \[
\Sigma^{-}_{i,t} = \{ (E,V)\in  \mathbf{M}_{i}(2,2r-t,c_2,2)  ~\vert~ \text{$(E,V)$ is not $\alpha$-stable for $\alpha>\alpha_{i+1}$} \}. 
\]
\end{Theorem}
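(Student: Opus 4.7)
The idea is to realize $\Sigma^{-}_{i,t}$ as a projective bundle over a product $G\times H_t$ by exhibiting each of its points as an extension of coherent systems determined by its destabilizing subsystem at the critical value $\alpha_i$.

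First I would show that any $(E,V)\in\Sigma^{-}_{i,t}$ fits into a unique short exact sequence of coherent systems
\[
0\to (E_1,V_1)\to (E,V)\to (E_2,V_2)\to 0,
\]
with the $\alpha_i$-slope of $(E_1,V_1)$ equal to that of $(E,V)$ and strictly greater for $\alpha>\alpha_i$. Using Corollary \ref{estabilidadconmaximal} together with Theorems \ref{invariantedesegre} and \ref{invariantedesegrenegativo}, the sub $E_1$ is forced to be the maximal line subbundle $\mathcal{O}_{\mathbb{P}^2}(r-s)$ from (\ref{ext}), the quotient $E_2\cong \mathcal{O}_{\mathbb{P}^2}(r+s-t)\otimes I_Z$, and the integers $\dim V_1$, $\dim V_2$ are pinned down by the slope equality at $\alpha_i$.

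Next, I would parameterize the pieces: let $G$ be the Grassmannian of choices of $V_1\subset H^0(\mathcal{O}_{\mathbb{P}^2}(r-s))$, and let $H_t$ parameterize the quotient system $(E_2,V_2)$, that is, a subscheme $Z\subset\mathbb{P}^2$ of the appropriate length together with the subspace $V_2\subset H^0(\mathcal{O}_{\mathbb{P}^2}(r+s-t)\otimes I_Z)$. Over $G\times H_t$ the relative extension groups in the category of coherent systems classifying such sequences assemble into a coherent sheaf; by base change combined with Serre duality and the Cayley--Bacharach criterion (Theorem \ref{Cayley-Bacharach}) to control the relevant $\mathrm{Ext}^2$ (and hence $\mathrm{Hom}$ from quotient to sub), this sheaf has constant rank, giving the vector bundle $W^{-}_t$. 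The classifying map $\mathbb{P}(W^{-}_t)\to\Sigma^{-}_{i,t}$ is bijective by uniqueness of the destabilizing subsystem, and then an isomorphism by standard deformation-theoretic arguments.

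Irreducibility of $\Sigma^{-}_{i,t}$ then follows because $G$ and $H_t$ are irreducible and a projective bundle over an irreducible base is irreducible. The dimension formula is read off as
\[
\dim\Sigma^{-}_{i,t}=\dim G+\dim H_t+\mathrm{rk}(W^{-}_t)-1,
\]
where $\dim G$ supplies the $2h^{0}(\mathbb{P}^2,\mathcal{O}_{\mathbb{P}^2}(r-s))$ term (up to an index shift), $\dim H_t$ gives the $3(c_2-r^2)+s^2+3s$ contribution via the length of $Z$ and the Grassmannian on $H^0(E_2)$, and $\mathrm{rk}(W^{-}_t)$ contributes the remaining constants together with the $t$-dependent correction $\tfrac{t}{2}(6r-2s-3-t)$ that records the parity of $c_1$. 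The main technical obstacle will be establishing the constancy of $\mathrm{rk}(W^{-}_t)$ over $G\times H_t$, which reduces to a uniform vanishing of $\mathrm{Hom}((E_2,V_2),(E_1,V_1))$ when the factors are $\alpha_i$-semistable of equal slope; this is precisely where the stability hypothesis and the Cayley--Bacharach property are indispensable.
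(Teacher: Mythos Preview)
Your approach is essentially the one the paper takes: characterize $\Sigma^{-}_{i,t}$ by extensions of coherent systems with the maximal subsystem as sub-object, parameterize the two ends over $G\times H_t$, show the relative $\mathrm{Ext}^1$ has constant rank so that it forms a vector bundle $W^{-}_t$, and build a universal extension over $\mathbb{P}(W^{-}_t)$ to obtain the morphism to the moduli space. Two small corrections are worth noting. First, in the situation of this theorem the critical value comes from Theorem~\ref{criticalThm1}(1) (resp.\ Theorem~\ref{criticalThmodd}(1)(i)), so $\dim V_1=2$ and $\dim V_2=0$; thus $H_t$ is simply the open subset of $\mathrm{Hilb}^{\ell(Z_t)}(\mathbb{P}^2)$ of subschemes not lying on a curve of degree $2s-1-t$, with no Grassmannian factor for $V_2$, and your dimension bookkeeping should be adjusted accordingly ($\dim H_t=2\ell(Z_t)$, not $3(c_2-r^2)+s^2+3s$; the remaining contribution is the rank of $W^{-}_t$). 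Second, the constancy of $\mathrm{rk}(W^{-}_t)$ is obtained in the paper via Lemma~\ref{cohomologiaext}: $\mathrm{Hom}$ vanishes for degree reasons, and $\mathrm{Ext}^2$ vanishes by Serre duality together with the defining condition of $H_t$ (that $Z_t$ is not contained in a curve of degree $2s-1-t$), rather than from the Cayley--Bacharach property itself.
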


The paper is organized as follows; in Section \ref{sect-VBSI} we introduce the results on the Segre invariant for rank two vector bundles on $\mathbb{P}^2$. In Section  \ref{Sec-CS} we show the fundamental outcomes concerning to coherent systems, specifically those related to coherent systems on $\mathbb{P}^2$.
Section \ref{sect-SICS} we develop the connection between the Segre invariant and $\alpha$-stability. In Section \ref{sect-Nonempt} we determine the conditions for the non-emptiness of the moduli space $\mathbf{M}_{\alpha}(2,c_1,c_2, 2)$.  Section \ref{sect- CV} provides a description of critical points and Section \ref{sect-Flips} describes topological properties of the flips.

\section{Vector bundles on $\mathbb{P}^{2}$ and Segre invariant.}\label{sect-VBSI}
Let $E$ be a vector bundle on $\mathbb{P}^2$ and $\mathcal{O}_{\mathbb{P}^2}(1)$ be an ample line bundle on $\mathbb{P}^2$.
The slope of $E$ is defined as 
$$
\mu(E)=\frac{c_1(E)}{\text{rk}(E)},
$$
where $c_1(E)$ and $\text{rk}(E)$ denote the first Chern class and the rank of $E$, respectively.
We say that the vector bundle $E$ is $\mu$-stable (resp. $\mu$-semistable)
if for any proper subbundle $F\subset E$ we have
$$
\mu(F) < \mu(E) \,\,\,\, (\text{resp.} \leq).\\
$$

As is well known, a vector bundle $E$ is of type $(n,c_1,c_2)$ if $rk(E)=n$, $c_1(E)=c_1$, and $c_2(E)=c_2$. The moduli space of $\mu$-stable vector bundles of type $(n,c_1,c_2)$
on a non-singular projective surface was constructed by Maruyama in \cite{Ma}. We denote by $\mathcal{M}_{\mathbb{P}^2}(n,c_1,c_2)$ or simply by $\mathcal{M}(n,c_1,c_2)$, the moduli space of $\mu$-stable vector bundles on $\mathbb{P}^2.$

In this paper, we will use basically three tools for studying vector bundles and coherent systems: the Cayley-Bacharach property, the Segre invariant, and short exact sequences of sheaves.  We recall the main results of these topics;

\begin{Theorem}\cite[Theorem 5.1.]{Huybrechts-Lehn} \label{Cayley-Bacharach}
Let $Z \subset X$ be a local complete intersection of codimension two in the non-singular projective surface $X$,
and let $L$ and $L_0$ be two line bundles on $X$.
Then, there exists an extension
\[0 \longrightarrow L \longrightarrow E \longrightarrow L_0 \otimes I_Z \longrightarrow 0\]
such that $E$ is a vector bundle if and only if the pair $(L^{-1}
\otimes L_0 \otimes \omega_X,Z)$ satisfies the Cayley-Bacharach
property:
\begin{align*}
 (CB) \,\,\,  & \text{if $\widetilde{Z} \subset Z$ is a sub-scheme with
 $\ell({\widetilde Z})= \ell(Z)-1$ and }\\
 & \text{$s \in H^0(L^{-1} \otimes L_0 \otimes \omega_X)$  with  
 $s\vert _{\widetilde{Z}}=0$, then $s\vert_Z=0$}.
\end{align*}
\end{Theorem}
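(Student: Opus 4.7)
Since extensions $0\to L\to E\to L_{0}\otimes I_{Z}\to 0$ are classified by $\mathrm{Ext}^{1}(L_{0}\otimes I_{Z},L)$ and $E$ is automatically locally free away from $Z$, my plan is to read local freeness at each $z\in Z$ off the local-to-global spectral sequence for $\mathrm{Ext}$, and then match the resulting criterion with (CB) by Serre duality on $X$. The starting point is the five-term exact sequence
\[
0\to H^{1}(X,L\otimes L_{0}^{-1})\to \mathrm{Ext}^{1}(L_{0}\otimes I_{Z},L)\xrightarrow{\pi} H^{0}(X,\mathcal{F})\xrightarrow{\partial} H^{2}(X,L\otimes L_{0}^{-1}),
\]
where $\mathcal{F}:=\mathcal{E}xt^{1}(I_{Z},L\otimes L_{0}^{-1})$; here I use that $\mathcal{H}om(I_{Z},\mathcal{O}_{X})=\mathcal{O}_{X}$ on a regular surface.

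Next, because $Z$ is a local complete intersection of codimension two, the Koszul resolution of $\mathcal{O}_{Z}$ yields $\mathcal{E}xt^{1}(I_{Z},\mathcal{O}_{X})\cong\det N_{Z/X}\cong(\omega_{Z}\otimes\omega_{X}^{-1})|_{Z}$, so $\mathcal{F}$ is an invertible sheaf on the finite scheme $Z$. I would then establish the key local assertion, by a direct computation in the Koszul model at each $z\in Z$: the extension $E$ is locally free at $z$ if and only if $\pi(\xi)(z)\neq 0$. Globally this says $E$ is a vector bundle iff $\pi(\xi)\in H^{0}(Z,\mathcal{F})$ is nowhere vanishing on $Z$.

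To finish, set $M:=L^{-1}\otimes L_{0}\otimes\omega_{X}$. Serre duality on $X$ together with Grothendieck duality on the zero-dimensional scheme $Z$ identifies $\partial^{\vee}$ with the restriction map $\rho:H^{0}(X,M)\to H^{0}(Z,M|_{Z})$, so that the image of $\pi$ is the annihilator of the image of $\rho$ under the residue pairing $H^{0}(Z,\mathcal{F})\otimes H^{0}(Z,M|_{Z})\to k$. Under this pairing, the linear forms on $H^{0}(Z,\mathcal{F})$ that detect vanishing of $\pi(\xi)$ on some length-one subscheme $\widetilde{Z}\subset Z$ correspond precisely to sections of $M|_{Z}$ supported on $Z\setminus\widetilde{Z}$; such a form lies in the image of $\rho$ exactly when there exists a global section of $M$ vanishing on $\widetilde{Z}$ but not on all of $Z$, i.e., exactly when (CB) fails. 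By point-avoidance in a linear subspace over the infinite ground field, this matches ``some $\xi$ has $\pi(\xi)$ nowhere vanishing on $Z$'' with (CB), yielding both directions of the equivalence.

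The step I expect to be the main obstacle is the local freeness criterion in the second paragraph: one must carefully unwind the Koszul presentation at each $z\in Z$ and verify that $\pi(\xi)(z)$ genuinely is the obstruction to trivializing the stalk $E_{z}$, rather than a merely proportional invariant. Once this pointwise identification is fixed, the remaining ingredients --- the spectral sequence, the duality identifications, and the linear algebra on the finite scheme $Z$ --- are formal, and the argument gives both implications simultaneously.
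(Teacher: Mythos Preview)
The paper does not prove this theorem at all; it is stated with a citation to \cite[Theorem 5.1]{Huybrechts-Lehn} and used as a black box. Your sketch is the standard proof one finds in that reference: the local-to-global spectral sequence for $\mathrm{Ext}^{1}(L_{0}\otimes I_{Z},L)$, the local freeness criterion via the Koszul resolution at each point of $Z$, and the Serre-duality identification of the boundary map with the restriction $H^{0}(X,M)\to H^{0}(Z,M|_{Z})$. So there is nothing to compare against in the paper itself, and your outline is correct and essentially the textbook argument.

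One small comment on your write-up: the step you flag as the main obstacle---that $\pi(\xi)(z)\neq 0$ is exactly the condition for $E_{z}$ to be free---is indeed the heart of the matter, and in Huybrechts--Lehn it is handled by the explicit local computation with the Koszul complex of a regular sequence $(f,g)$ cutting out $Z$ near $z$. Your description of the final linear-algebra step (matching nowhere-vanishing of $\pi(\xi)$ with (CB) via the residue pairing and point-avoidance) is also correct, though you should make explicit that the ``length-one'' subschemes correspond to the closed points of $Z$ only when $Z$ is reduced; in the general lci case the argument goes through the trace pairing on the Artinian local rings, which is what your phrase ``sections of $M|_{Z}$ supported on $Z\setminus\widetilde{Z}$'' is implicitly using.
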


This result is provided for non-singular projective surfaces. However, we will apply it to the projective plane $X=\mathbb{P}^2$. The Segre invariant for vector bundles on $\mathbb{P}^2$ is defined as follows; Let $E$ be a vector bundle of rank two on $\mathbb{P}^2$.
The Segre invariant of $E,$ denoted by $S(E),$ is the integer number given by
$$
S(E):= c_1(E)-2\max_{L\subset E}\{c_1(L)\}
$$
where the maximum is taken over all line subbundles $L$ of $E$ (see \cite{Leo}).
If $L \subset E$ is a line subbundle such that $S(E)=c_1(E)-2c_1(L),$
then $L$ is called the maximal line subbundle of $E$ and is denoted by $L_{max}$.

It is easy to check that $E$ is a stable (resp. semistable) rank two vector bundle, if and only if $S(E) > 0 $ (resp. $\geq 0$) 
and $S(E)< 0$ if and only if
$E$ is $\mu$-unstable.  Here, we note that the stability of the vector bundle $E$ is determined by its Segre invariant; moreover, it is determined by its maximal line bundle.  The term invariant is used because it does not change under the tensor product of line bundles, i.e. $S(E)= S(E \otimes L)$.

In \cite{Ma3} and \cite{RTZ} it is shown that the degree of line bundles
of $E$ is bounded from above. Therefore, for every vector bundle $E$, there exists a unique maximal line subbundle.
The maximal subbundle allows us to study geometric and topological properties of the moduli space of vector bundles via Segre invariant and extensions, this is shown in following theorems;

The following result was given in \cite[Theorem 3.1 and  Corollary 3.5]{RTZ}. We reformulate the theorem and the proof to emphasize the properties that will be used in the subsequent sections.

\begin{Theorem}\label{invariantedesegre}
    Let $r,c_2$ be integer numbers and $t\in\{0,1\}$ with $c_2\geq r^2+2-t$ and $s\in \mathbb{N}$. Then the following statements are equivalent;
    
    \begin{enumerate}
        \item There exists a vector bundle $E\in \mathcal{M}_{\mathbb{P}^2}(2,2r-t,c_2)$
    with $S(E)=2s-t.$ 
        \item  $c_2\geq s^2+s+r^2-t(r+s).$
        \item There exists a zero cycle $Z$ which is a locally complete intersection of length $\ell(Z) = c_2 + s^2-r^2+t(r-s)$ and it is not contained in any curve of degree $2s-1-t$.
            Moreover, $Z$ fits into the following exact sequence
    \begin{eqnarray*}
     0\rightarrow \mathcal{O}_{\mathbb{P}^2}(r-s)\rightarrow E \rightarrow \mathcal{O}_{\mathbb{P}^2}(r+s-t) \otimes I_Z \rightarrow 0,
    \end{eqnarray*}
where $E$ vector bundle.
    \end{enumerate}

\end{Theorem}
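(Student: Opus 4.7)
The plan is to prove the equivalence via the chain $(1)\Rightarrow(3)\Rightarrow(2)\Rightarrow(1)$, using the maximal line subbundle and the Cayley-Bacharach criterion (Theorem~\ref{Cayley-Bacharach}) as the main tools. The unifying idea is the correspondence ``line subbundles of $E$ of large degree $\leftrightarrow$ curves of small degree containing $Z$''.

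For $(1)\Rightarrow(3)$, assume $E\in\mathcal{M}_{\mathbb{P}^2}(2,2r-t,c_2)$ with $S(E)=2s-t$. The maximal line subbundle $L_{\max}\subset E$ has first Chern class $r-s$, so $L_{\max}=\mathcal{O}_{\mathbb{P}^2}(r-s)$; the torsion-free rank-one quotient is necessarily of the form $\mathcal{O}_{\mathbb{P}^2}(r+s-t)\otimes I_Z$ for some $0$-dimensional $Z$, which is a local complete intersection because $E$ is locally free, and a Chern-class computation yields $\ell(Z)=c_2+s^2-r^2+t(r-s)$. Maximality of $L_{\max}$ forces $Z$ to lie on no curve of degree $d\le 2s-1-t$: any such curve would give an inclusion $\mathcal{O}_{\mathbb{P}^2}(-d)\hookrightarrow I_Z$, which pulls back through $E\twoheadrightarrow\mathcal{O}_{\mathbb{P}^2}(r+s-t)\otimes I_Z$ to a line subsheaf of $E$ whose saturation has first Chern class $\ge r+s-t-d>r-s$, a contradiction.

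For $(3)\Rightarrow(2)$, the hypothesis $h^0(I_Z(2s-1-t))=0$, combined with the exact sequence $0\to I_Z(2s-1-t)\to\mathcal{O}_{\mathbb{P}^2}(2s-1-t)\to\mathcal{O}_Z(2s-1-t)\to 0$, gives $\ell(Z)\ge h^0(\mathcal{O}_{\mathbb{P}^2}(2s-1-t))=\binom{2s+1-t}{2}$; substituting $\ell(Z)=c_2+s^2-r^2+t(r-s)$ rearranges to $c_2\ge s^2+s+r^2-t(r+s)$.

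For $(2)\Rightarrow(1)$, pick a general reduced subscheme $Z\subset\mathbb{P}^2$ of length $\ell=c_2+s^2-r^2+t(r-s)$. By (2) one has $\ell\ge\binom{2s+1-t}{2}$, so such a $Z$ is not on any curve of degree $2s-1-t$; a direct comparison also gives $\ell>\binom{2s-t-1}{2}$, hence $h^0(I_{\tilde Z}(2s-t-3))=0$ for every colength-one $\tilde Z\subset Z$ (vacuously when $2s-t-3<0$). The pair $(\mathcal{O}_{\mathbb{P}^2}(2s-t-3),Z)=(L^{-1}\otimes L_0\otimes\omega_{\mathbb{P}^2},Z)$ therefore satisfies Cayley-Bacharach trivially, and Theorem~\ref{Cayley-Bacharach} produces a locally free extension
\[
0\to\mathcal{O}_{\mathbb{P}^2}(r-s)\to E\to\mathcal{O}_{\mathbb{P}^2}(r+s-t)\otimes I_Z\to 0
\]
of the required rank and Chern classes. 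Running the $(1)\Rightarrow(3)$ argument in reverse, any line subbundle $\mathcal{O}_{\mathbb{P}^2}(m)\subset E$ with $m>r-s$ must project non-trivially to the quotient, inducing a non-zero section of $I_Z(r+s-t-m)$ and placing $Z$ on a curve of degree at most $2s-1-t$, which is impossible; hence $S(E)=2s-t$, and $\mu$-stability follows from $2s-t>0$. The main obstacle is the verification of the Cayley-Bacharach property in $(2)\Rightarrow(1)$; the trick is that genericity of $Z$ forces the cohomology groups to vanish, so CB holds for trivial reasons, and the remainder is Chern-class bookkeeping played against the extension.
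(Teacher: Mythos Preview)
Your argument is correct, and the organization differs from the paper's in a useful way. The paper cites \cite[Theorem 3.1 and Corollary 3.5]{RTZ} for $(1)\Leftrightarrow(2)$ and then proves $(2)\Rightarrow(3)\Rightarrow(1)$ directly; you instead run the self-contained cycle $(1)\Rightarrow(3)\Rightarrow(2)\Rightarrow(1)$. In particular, your $(1)\Rightarrow(3)$ (extracting the extension from the maximal subbundle of the \emph{given} $E$ and reading off the curve-avoidance of $Z$ from maximality) is an argument the paper does not give, and your $(3)\Rightarrow(2)$ replaces the external citation by a one-line length count. The paper's $(3)\Rightarrow(1)$ proves maximality via the cohomology identification $H^0(E(l))\cong H^0(I_Z(r+s+l-t))$, while your maximality argument in $(2)\Rightarrow(1)$ is the contrapositive via projection to the quotient; both are standard and equivalent in content. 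Your route has the advantage of being self-contained; the paper's has the advantage of isolating the numerical equivalence $(1)\Leftrightarrow(2)$ as a citable fact.

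One point of language to tighten in $(1)\Rightarrow(3)$: the inclusion $\mathcal{O}_{\mathbb{P}^2}(r+s-t-d)\hookrightarrow \mathcal{O}_{\mathbb{P}^2}(r+s-t)\otimes I_Z$ does not literally ``pull back'' to a line subsheaf of $E$ (the fibered product along $E\twoheadrightarrow \mathcal{O}_{\mathbb{P}^2}(r+s-t)\otimes I_Z$ has rank two). What you need is a \emph{lift} $\mathcal{O}_{\mathbb{P}^2}(r+s-t-d)\to E$, and this exists because the obstruction lies in $\mathrm{Ext}^1(\mathcal{O}_{\mathbb{P}^2}(r+s-t-d),\mathcal{O}_{\mathbb{P}^2}(r-s))\cong H^1(\mathbb{P}^2,\mathcal{O}_{\mathbb{P}^2}(d-2s+t))=0$. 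With that one word changed the step is complete.
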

\begin{proof}
The proof of $(1)$ if and only if $(2)$ follows from \cite[Theorem 3.1 and  Corollary 3.5]{RTZ}. 

$(2)\Rightarrow (3):$ Let $Z$ be a zero cycle which is a locally complete intersection of length $\ell(Z) = c_2 + s^2-r^2+t(r-s)$ in general position. The condition $c_2\geq s^2+s+r^2-t(r+s)$ implies that $\ell(Z)\geq h^0(\mathbb{P}^2,\mathcal{O}_{\mathbb{P}^2}(2s-1-t))$. Therefore $h^0(\mathbb{P}^2,\mathcal{O}_{\mathbb{P}^2}(2s-1-t)\otimes I_Z)=0$. This show the existence of $Z$, now we will show the existence of the exact sequence.

 Let ${\tilde Z}\subset Z$ be a zero cycle such that $\ell({\tilde Z})=\ell(Z)-1$.    The pair $(\mathcal{O}_{\mathbb{P}^2}(2s-3-t),Z)$ satisfies the Cayley-Bacharach property (see Theorem \ref{Cayley-Bacharach}), then there exists an extension 
\begin{equation}\label{38imaximal}
  0 \longrightarrow \mathcal{O}_{\mathbb{P}^2}(r-s) \longrightarrow E \longrightarrow \mathcal{O}_{\mathbb{P}^2}(r+s-t) \otimes I_Z \longrightarrow 0. 
\end{equation} 
where $E$ is locally free and has Chern classes $c_1(E)=2r-t$  and $c_2(E) = c_2$.

 $(3) \Rightarrow (1):$ 
 
 Let $Z\subset \mathbb{P}^2$ be a local complete intersection of codimension 2 with length $\ell(Z)=  c_2 + s^2-r^2+t(r-s)$  such that $Z$ is not contained in any curve of degree $2s-1-t$,  and let 

 \begin{equation}\label{38maximal}
  0 \longrightarrow \mathcal{O}_{\mathbb{P}^2}(r-s) \longrightarrow E \longrightarrow \mathcal{O}_{\mathbb{P}^2}(r+s-t) \otimes I_Z \longrightarrow 0, 
\end{equation} 

the extension (\ref{38maximal}) that fits $Z,$ hence $E$ is a vector bundle such that $c_1(E)=2r-t$  and $c_2(E) = c_2$.
Since $Z$ is not contained in any curve of degree  $2s-1-t$ follows that $H^0(\mathbb{P}^2, \mathcal{O}_{\mathbb{P}^2}(s-t) \otimes I_Z) = H^0(\mathbb{P}^2,E(-r))=0$. Since $c_1(E(-r))=-t$, it follows that  the vector bundle $E(-r)$ and $E$ is stable.

Finally, we prove that  $\mathcal{O}_{\mathbb{P}^2}(r-s)$ is maximal.  Let $\mathcal{O}_{\mathbb{P}^2}(l)$ be a line 
bundle with $l<s-r$, from the exact sequence (\ref{38maximal}) we have 
\[0 \longrightarrow \mathcal{O}_{\mathbb{P}^2}(r-s+l) \longrightarrow E(l) \longrightarrow \mathcal{O}_{\mathbb{P}^2}(r+s+l-t) \otimes I_Z \longrightarrow 0\]

which induces the long exact sequence in cohomology
\begin{align*}
    0 \longrightarrow & H^0(\mathbb{P}^2, \mathcal{O}_{\mathbb{P}^2}(r-s+l)) \longrightarrow H^0(\mathbb{P}^2, E(l)) \longrightarrow H^0(\mathbb{P}^2, \mathcal{O}_{\mathbb{P}^2}(r+s+l-t)\otimes I_Z) \longrightarrow \\
    & H^1(\mathbb{P}^2, \mathcal{O}_{\mathbb{P}^2}(r-s+l)) \longrightarrow \ldots
\end{align*} 
where \[H^0(\mathbb{P}^2, \mathcal{O}_{\mathbb{P}^2}(r-s+l))= H^1(\mathbb{P}^2, \mathcal{O}_{\mathbb{P}^2}(r-s+l))=0\]
because $l<s-r$. Therefore, 
\begin{equation}\label{stab}
H^0(\mathbb{P}^2, E(l)) = H^0(\mathbb{P}^2, \mathcal{O}_{\mathbb{P}^2}(r+s+l-t)\otimes I_Z).    \end{equation}
Since $l<s-r,$ and $Z$ is not contained in any curve of degree $2s-1-t$, it follows that $r+s+l-t \leq 2s-1-t$, hence

\[h^0(\mathbb{P}^2, \mathcal{O}_{\mathbb{P}^2}(r+s+l-t)\otimes I_Z) \leq h^0(\mathbb{P}^2, \mathcal{O}_{\mathbb{P}^2}(2s-1-t)\otimes I_Z) \leq 0,\]
we conclude from (\ref{stab}) that $H^0(\mathbb{P}^2, E(l))=0$. This implies that $\mathcal{O}_{\mathbb{P}^2}(-l)$ is not a subbundle of $E$ and $\mathcal{O}_{\mathbb{P}^2}(r-s)$ is maximal, which is the desired conclusion.
\end{proof}

Theorem \ref{invariantedesegre} is given for
stable vector bundles, the following result considers the case for semistable and unstable vector bundles.

\begin{Theorem}\label{invariantedesegrenegativo}  Let $r,c_2, s$
be integer numbers with $s\leq 0$ and $t\in \{0,1\}$. The following statements are equivalent;
\begin{enumerate}
    \item  There exists a rank two vector bundle $E$  with $c_1(E)=2r-t$, $c_2(E)=c_2$
    and $S(E)=2s-t$.
    \item  $c_2\ge r^2-s^2-t(r-s)$.
    \item  For any zero cycle $Z$ that is a locally complete intersection of length $\ell(Z)=c_2+s^2-r^2+t(s-r)$, $Z$ fits into the exact sequence
    \begin{eqnarray*}
     0\rightarrow \mathcal{O}_{\mathbb{P}^2}(r-s)\rightarrow E \rightarrow \mathcal{O}_{\mathbb{P}^2}(r+s-t) \otimes I_Z \rightarrow 0,
    \end{eqnarray*}
where $E$ is a vector bundle.
     \end{enumerate}
\end{Theorem}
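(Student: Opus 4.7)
The plan is to mirror the three-way equivalence argument used in Theorem \ref{invariantedesegre}, but to exploit the hypothesis $s\le 0$, which makes both the Cayley--Bacharach step and the maximality step considerably cleaner than in the stable case. In particular, no genericity condition on $Z$ is required, which is why statement $(3)$ is formulated for \emph{any} zero cycle of the prescribed length.

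For $(1)\Rightarrow(2)$, if $E$ exists with $c_1(E)=2r-t$ and $S(E)=2s-t$, then because $\mathrm{Pic}(\mathbb{P}^2)\cong\mathbb{Z}$ the maximal line subbundle is forced to be $\mathcal{O}_{\mathbb{P}^2}(r-s)$, yielding an extension of the shape in $(3)$ with $\ell(Z)=c_2-(r-s)(r+s-t)$. Expanding $(r-s)(r+s-t)=r^2-s^2-t(r-s)$ and imposing $\ell(Z)\ge 0$ gives exactly the inequality in $(2)$.

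For $(2)\Rightarrow(3)$, I would take any local complete intersection $Z$ of the prescribed length (which is $\ge 0$ by hypothesis) and apply Theorem \ref{Cayley-Bacharach} with $L=\mathcal{O}_{\mathbb{P}^2}(r-s)$ and $L_0=\mathcal{O}_{\mathbb{P}^2}(r+s-t)$. The twist to be tested is
$$
L^{-1}\otimes L_0\otimes\omega_{\mathbb{P}^2}\;\cong\;\mathcal{O}_{\mathbb{P}^2}(2s-t-3),
$$
and since $s\le 0$ and $t\in\{0,1\}$ this sheaf has degree $\le -3$, so $H^0(\mathbb{P}^2,\mathcal{O}_{\mathbb{P}^2}(2s-t-3))=0$. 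The Cayley--Bacharach condition is therefore vacuous, the required extension exists, and $E$ is automatically locally free. This is the step where the hypothesis $s\le 0$ buys the most, compared with Theorem \ref{invariantedesegre}.

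For $(3)\Rightarrow(1)$, the Chern classes of $E$ follow immediately from the exact sequence; what remains is to show that $\mathcal{O}_{\mathbb{P}^2}(r-s)$ is a maximal line subbundle, hence $S(E)=2s-t$. Suppose for contradiction that some $\mathcal{O}_{\mathbb{P}^2}(r-s+k)$ with $k\ge 1$ embeds as a subbundle. Twisting the defining sequence by $\mathcal{O}_{\mathbb{P}^2}(-r+s-k)$ and taking the long exact cohomology sequence gives
$$
H^0\bigl(\mathbb{P}^2,E(-r+s-k)\bigr)\hookrightarrow H^0\bigl(\mathbb{P}^2,\mathcal{O}_{\mathbb{P}^2}(2s-t-k)\otimes I_Z\bigr),
$$
because $H^0(\mathcal{O}_{\mathbb{P}^2}(-k))=0$ when $k\ge 1$. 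The right-hand group embeds into $H^0(\mathcal{O}_{\mathbb{P}^2}(2s-t-k))$, which vanishes since $2s-t-k\le -1$. Hence no such subbundle exists, $\mathcal{O}_{\mathbb{P}^2}(r-s)$ is maximal, and $S(E)=2s-t$. The main obstacle is mostly bookkeeping rather than a substantive difficulty: unlike the stable case, here all the relevant line bundles are pushed into negative degree by the sign constraint $s\le 0$, so both the Cayley--Bacharach vanishing and the maximality check reduce to degree inspections.
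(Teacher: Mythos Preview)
Your proof is correct and for the implications $(1)\Rightarrow(2)$ and $(2)\Rightarrow(3)$ it follows exactly the paper's line of argument (your identification of the Cayley--Bacharach twist as $\mathcal{O}_{\mathbb{P}^2}(2s-t-3)$ is in fact cleaner than the paper, which writes the pair as $(\mathcal{O}_{\mathbb{P}^2}(-t-3),Z)$).

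The only methodological difference is in $(3)\Rightarrow(1)$. You establish maximality of $\mathcal{O}_{\mathbb{P}^2}(r-s)$ by a cohomological argument: twist the sequence by $\mathcal{O}_{\mathbb{P}^2}(-r+s-k)$ and use the long exact sequence together with the degree bound $2s-t-k<0$ to force $H^0(E(-r+s-k))=0$. This is precisely the technique the paper uses in the \emph{stable} companion result (Theorem~\ref{invariantedesegre}). For the present theorem, however, the paper takes a more direct route: given any line subbundle $L\subset E$, it looks at the composite $\phi:L\hookrightarrow E\twoheadrightarrow \mathcal{O}_{\mathbb{P}^2}(r+s-t)\otimes I_Z\hookrightarrow\mathcal{O}_{\mathbb{P}^2}(r+s-t)$ and splits into the two cases $\phi=0$ (forcing $L\subset\mathcal{O}_{\mathbb{P}^2}(r-s)$) and $\phi\neq 0$ (forcing $c_1(L)\le r+s-t\le r-s$ since $s\le 0$). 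Both arguments are short and valid; the paper's map-theoretic version avoids cohomology entirely, while yours has the virtue of being a uniform adaptation of the stable-case proof.
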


\begin{proof} 
$(1)\Rightarrow (2)$ 
Assume that there exists a vector bundle $E$ with
$c_1(E)=2r-t$ $c_2(E)=c_2$ and $S(E)=2s-t$. Therefore, $E$ fits into the following exact sequence 
 \begin{eqnarray*}
    0\rightarrow \mathcal{O}_{\mathbb{P}^2}(r-s)\rightarrow E \rightarrow \mathcal{O}_{\mathbb{P}^2}(r+s-t) \otimes I_Z \rightarrow 0,
    \end{eqnarray*}
where $Z\subset \mathbb{P}^2$ is of codimension $2$. Since $\ell(Z)\ge 0,$ 
it follows that $c_2(E)\ge r^2-s^2-tr+ts.$

$(2)\Rightarrow (3)$
Let $Z\subset \mathbb{P}^2$ be a complete intersection
of codimension two and length $\ell(Z)=c_2+s^2-r^2+rt-st$.
Since $H^0(\mathbb{P}^2,\mathcal{O}_{\mathbb{P}^2}(-t-3))=0$,
it follows that the pair $(\mathcal{O}_{\mathbb{P}^2}(-t-3),Z)$
satisfies the Cayley-Bacharach property. 
Therefore, by Theorem \ref{Cayley-Bacharach} we have the following extension.
\begin{equation}\label{38imaximal}
  0 \longrightarrow \mathcal{O}_{\mathbb{P}^2}(r-s) \longrightarrow E \longrightarrow \mathcal{O}_{\mathbb{P}^2}(r+s-t) \otimes I_Z \longrightarrow 0,
\end{equation}
where $E$ is vector bundle.

$(3)\Rightarrow (1)$. Assume that we have  the exact sequence (\ref{38imaximal}). Therefore $c_1(E)=2r-t$
and $c_2(E) = c_2$. We want to prove that $S(E)=2s-t$. Let $L\subset E$ be a line bundle and consider
$\phi:L\rightarrow \mathcal{O}_{\mathbb{P}^2}(r+s-t)$ as the following composition. 
\begin{eqnarray*}
L\hookrightarrow E \to \mathcal{O}_{\mathbb{P}^2}(r+s-t)\otimes I_Z \hookrightarrow \mathcal{O}_{\mathbb{P}^2}(r+s-t).
\end{eqnarray*}

Now, we consider two cases: 

{\bf Case 1: $\phi=0$.} Then $L\subset \mathcal{O}_{\mathbb{P}^2}(r-s)$
and $c_1(L)\leq r-s.$

\vspace{.1cm}

{\bf Case 2: $\phi\neq 0.$} Then $c_1(L)\leq r+s-t.$ 
Since $t\in \{0,1\}$ and $s\leq 0$, it follows that $c_1(L)\leq r+s-t\leq r+s\leq r-s.$

Hence, $\mathcal{O}_{\mathbb{P}^2}(r-s)$ is maximal and $S(E)=2s-t.$ 

\end{proof}

\section{Coherent Systems.}\label{Sec-CS}

In this section, we recall the main results that we will use on coherent systems.
The results exposed in the first part are general to  coherent systems on any smooth, irreducible surface.
In the second part, we focus to  coherent systems on $\mathbb{P}^2$.

\subsection{Coherent systems on  surfaces} 
Let $X$ be a complex smooth and irreducible projective surface. 
A coherent system on $X$ of type $(n, c_1,c_2, k)$  
is a pair $(\mathcal{E},V)$ where $\mathcal{E}$ is a coherent sheaf
on $X$ of rank $n$, with fixed Chern classes
$c_i \in H^{2i}(X,\mathbb{Z})$, for $i=1,2$ and
$V\subseteq H^{0}(X,\mathcal{E})$ a subspace of dimension $k$. 
A coherent subsystem of $(\mathcal{E},V)$ is a pair $(\mathcal{F},W)$ 
where $0 \neq \mathcal{F}\subset \mathcal{E}$ is a proper subsheaf of 
$\mathcal{E}$ and $W \subseteq V\cap H^{0}(X,\mathcal{F})$. 
When $\mathcal{E}$ is locally free,  we will write $(E,V)$ instead of $(\mathcal{E},V).$

\begin{Definition}\label{DefHPol}
\begin{em}
Let $(\mathcal{E},V)$ be a coherent system on $X$ of type $(n, c_1,c_2,k).$
    Let $\alpha\in \mathbb{Q}[m]$ be a polynomial and let $H$ be an ample line bundle on $X$, 
    the \it{reduced Hilbert polynomial} of $(\mathcal{E},V)$ is defined as:
    \begin{eqnarray*}
p^{\alpha}_{H,(\mathcal{E},V)}(m)=\frac{k}{n}\cdot \alpha +\frac{P_{H,\mathcal{E}}(m)}{n},
\end{eqnarray*}
where $P_{H,\mathcal{E}}(m)$ denotes the Hilbert polynomial of $\mathcal{E}$, i.e., 
\begin{eqnarray*}
\frac{P_{H,\mathcal{E}}(m)}{n} &=&\frac{H^2 m^2}{2} + \left[ \frac{c_1\cdot H}{n}- \frac{K_X\cdot H}{2} \right]m\\
&&+ \frac{1}{n}\left(\frac{c_1^2-(c_1\cdot K_X)}{2}-c_2 \right)+\chi(\mathcal{O}_X).
\end{eqnarray*}
\end{em}
\end{Definition}

Associated to coherent systems and its reduced Hilbert polynomial, a notion of stability is given as follows; For a fixed polynomial $\alpha \in \mathbb{Q}[m]$,
we say that $(\mathcal{E},V)$ is $\alpha$-stable (resp. $\alpha$-semistable)
if for any proper coherent subsystem $(\mathcal{F},W)\subset (\mathcal{E},V)$
we have
\begin{equation}
p^{\alpha}_{H,(\mathcal{F},W)}(m) < p^{\alpha}_{H,(\mathcal{E},V)}(m), \,\,\,\,\,  (resp. \leq).
\end{equation}

The following result shows that it is enough to verify the $\alpha$-stability
of  coherent systems $(E,V)$ 
by examining coherent subsystems $(\mathcal{F},W)$ with $\mathcal{F}$ being a vector bundle.

\begin{Lemma}\label{lemlocfresemi}
Let $H$ be an ample divisor on a smooth, irreducible algebraic surface $X$ and let $\alpha\in \mathbb{Q}[m]$ be a polynomial.
Let $(E,V)$ be a coherent system.
If for every coherent subsystem $(\mathcal{F},W)\subset (E,V)$ with $\mathcal{F}$ a vector bundle,
we have $p_{H,(\mathcal{F},W)}^{\alpha}(m)<p_{H,(E,V)}^{\alpha}(m)$ (resp. $\leq$),
then $(E,V)$ is $\alpha$-stable (resp. $\alpha$-semistable).
\end{Lemma}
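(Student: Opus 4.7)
My plan is to show that any proper coherent subsystem $(\mathcal{F},W)\subset (E,V)$ can be enlarged to a coherent subsystem $(\mathcal{F}^{\mathrm{sat}},W)$ whose underlying sheaf is a vector bundle and whose reduced Hilbert polynomial is not smaller; applying the hypothesis to $(\mathcal{F}^{\mathrm{sat}},W)$ then yields the conclusion. Concretely, let $\mathcal{F}^{\mathrm{sat}}\subset E$ denote the saturation of $\mathcal{F}$ in $E$, namely the preimage in $E$ of the torsion subsheaf of $E/\mathcal{F}$. Then $\mathcal{F}\subseteq\mathcal{F}^{\mathrm{sat}}\subseteq E$, the sheaves $\mathcal{F}$ and $\mathcal{F}^{\mathrm{sat}}$ have the same rank, the quotient $T:=\mathcal{F}^{\mathrm{sat}}/\mathcal{F}$ is torsion, and $E/\mathcal{F}^{\mathrm{sat}}$ is torsion-free by design.

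The first and main step is to verify that $\mathcal{F}^{\mathrm{sat}}$ is locally free, which is where the smoothness of the surface enters essentially. I would apply the depth lemma to the short exact sequence
\[
0\longrightarrow \mathcal{F}^{\mathrm{sat}}\longrightarrow E\longrightarrow E/\mathcal{F}^{\mathrm{sat}}\longrightarrow 0.
\]
At each closed point $x\in X$ one has $\mathrm{depth}_x(E)=2$ because $E$ is locally free on the smooth surface, and $\mathrm{depth}_x(E/\mathcal{F}^{\mathrm{sat}})\geq 1$ because the quotient is torsion-free, so $\mathrm{depth}_x(\mathcal{F}^{\mathrm{sat}})\geq 2$. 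Auslander--Buchsbaum then forces the stalk of $\mathcal{F}^{\mathrm{sat}}$ at every closed point to be free, hence $\mathcal{F}^{\mathrm{sat}}$ is a vector bundle. The inclusions $W\subseteq V\cap H^0(X,\mathcal{F})\subseteq V\cap H^0(X,\mathcal{F}^{\mathrm{sat}})$ show that $(\mathcal{F}^{\mathrm{sat}},W)$ is indeed a coherent subsystem of $(E,V)$.

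To conclude, since $T$ is torsion we have $P_{H,\mathcal{F}^{\mathrm{sat}}}(m)=P_{H,\mathcal{F}}(m)+P_{H,T}(m)$ with $P_{H,T}(m)\geq 0$ for $m\gg 0$, so dividing by the common rank and noting that both subsystems share the same $\dim W$ gives
\[
p^{\alpha}_{H,(\mathcal{F},W)}(m)\leq p^{\alpha}_{H,(\mathcal{F}^{\mathrm{sat}},W)}(m)\quad (m\gg 0),
\]
with equality iff $\mathcal{F}=\mathcal{F}^{\mathrm{sat}}$. If $\mathcal{F}^{\mathrm{sat}}\subsetneq E$, then $(\mathcal{F}^{\mathrm{sat}},W)$ is a proper coherent subsystem with vector bundle sheaf and the hypothesis yields $p^{\alpha}_{H,(\mathcal{F}^{\mathrm{sat}},W)}(m)<p^{\alpha}_{H,(E,V)}(m)$, chaining which with the previous inequality finishes the argument. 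In the remaining case $\mathcal{F}^{\mathrm{sat}}=E$, the inclusion $\mathcal{F}\subsetneq E$ has full rank with nonzero torsion quotient $E/\mathcal{F}$, and a direct computation using $\dim W\leq \dim V$ and the positive leading coefficient of $\alpha$ gives the strict inequality without invoking the hypothesis. The semistable statement follows verbatim with every $<$ replaced by $\leq$; the main technical obstacle is the local freeness of $\mathcal{F}^{\mathrm{sat}}$, after which the rest is routine Hilbert polynomial bookkeeping.
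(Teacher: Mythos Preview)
Your argument is correct and follows the same strategy as the paper's proof: enlarge $\mathcal{F}$ to a locally free subsheaf of the same rank inside $E$, observe that the reduced Hilbert polynomial can only increase, and then apply the hypothesis. The only difference is that the paper passes to the reflexive hull $\mathcal{F}^{**}$ (using that reflexive sheaves on a smooth surface are locally free) rather than to the saturation, and it does not isolate the full-rank edge case $\mathcal{F}^{\mathrm{sat}}=E$ that you treat separately.
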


\begin{proof}
Let $(\mathcal{F},W) \subset (E,V)$ be a coherent subsystem, 
we will prove that $p_{H,(\mathcal{F},W)}^{\alpha}(m)<p_{H,(E,V)}^{\alpha}(m)$. 
Since $\mathcal{F}$ is torsion free, there exists a canonical embedding
$\mathcal{F}\rightarrow \mathcal{F}^{**}$ which fit in the following diagram 
\begin{equation*}
 \xymatrix{   \mathcal{F}  \ar[r] \ar@{^{(}->}[d]      &  E \ar@{^{(}->}[d] & \\
				 \mathcal{F}^{**} \ar[r]_{}& E^{**}.} \\
\end{equation*}

Therefore, the morphism $\mathcal{F} \rightarrow E$ factors through $\mathcal{F} \hookrightarrow \mathcal{F}^{**}$. The injective morphism $\mathcal{F} \hookrightarrow \mathcal{F}^{**}$ induces an injective morphism $H^0(X,\mathcal{F})\hookrightarrow H^0(X,\mathcal{F}^{**})$.
Therefore, we see that any morphism $(\mathcal{F},W)\rightarrow (E,V)$ factor through $(\mathcal{F}^{**},W)$.  
Therefore, $(\mathcal{F}^{**},W)$ is a coherent subsystem of $(E,V)$.

Since $\mathcal{F}$ is a torsion free sheaf, it follows that $\mathcal{F}$ fits the following exact sequence
\[0 \to \mathcal{F} \to \mathcal{F}^{**} \to \mathcal{F}^{**}/\mathcal{F} \to 0\]
where the support of $\mathcal{F}^{**}/\mathcal{F}$ has codimension two (see \cite[Proposition 1.26]{Brinzanescu}). Therefore, $c_1(\mathcal{F})=c_1(\mathcal{F}^{**})$ and $c_2(\mathcal{F})\geq c_2(\mathcal{F}^{**})$. Since $\text{rk}(\mathcal{F})=\text{rk}(\mathcal{F}^{**})$, it follows that
$p^{\alpha}_{H,(\mathcal{F},W)}(m) \leq p^{\alpha}_{H,(\mathcal{F}^{**},W)}(m)$ for any polynomial $\alpha \in\mathbb{Q}[m]$. The sheaf $\mathcal{F}^{**}$ is reflexive; therefore, it is a vector bundle on $X$. By hypothesis, we have
\begin{eqnarray*}
p^{\alpha}_{H,(\mathcal{F},W)}(m) \leq p^{\alpha}_{H,(\mathcal{F}^{**},W)}(m)<p_{H,(E,V)}^{\alpha}(m),
\end{eqnarray*}
therefore $p_{H,(\mathcal{F},W)}^{\alpha}(m)<p_{H,(E,V)}^{\alpha}(m)$ as we desired.
\end{proof}

\begin{Remark}
\begin{em}
    Notice that a coherent system $(\mathcal{E},V)$ is strictly $\alpha$-semistable 
    if it is $\alpha$-semistable and there exists a coherent subsystem $(\mathcal{F},W)\subset (\mathcal{E},V)$
    such that $p^{\alpha}_{H,(\mathcal{F},W)}(m)= p^{\alpha}_{H,(\mathcal{E},V)}(m)$.
    Moreover, if $(E,V)$ is a coherent system strictly $\alpha$-semistable
    and $(\mathcal{F},W)\subset (E,V)$ a coherent subsystem such that $p^{\alpha}_{H,(\mathcal{F},W)}(m)= p^{\alpha}_{H,(E,V)}(m)$.
    Then, from the proof of the Lemma~\ref{lemlocfresemi} it follows that $\mathcal{F}$ can be considered as locally free.
    \end{em}
\end{Remark}

In \cite{Potier} and \cite{Min} the moduli space
of coherent systems on $X$ is constructed using geometric invariant theory (GIT).
In this paper we will denote by $\mathbf{S}_{X,\alpha}(n,c_1, c_2, k)$ 
the moduli space of $\alpha$-semistable coherent systems of type
$(n,c_1,c_2,k)$ on $X$.

It is known that the property of being locally free is open; here there is an open subscheme 
$\mathbf{M}_{X,\alpha}(n,c_1, c_2, k) \subset \mathbf{S}_{X,\alpha}(n,c_1, c_2, k)$ 
which consists of equivalence classes of $\alpha$-stable coherent systems $(E,V)$
of type $(n, c_1,c_2,k)$. Similarly, we denote by
$\widetilde{\mathbf{M}}_{X,\alpha}(n,c_1, c_2, k) \subset \mathbf{S}_{X,\alpha}(n,c_1, c_2, k)$
the open subscheme of $\alpha$-semistable coherent systems $(E,V)$.

\subsection{Coherent Systems on $\mathbb{P}^2$} 
From now on, we focus our attention on coherent systems $(E,V)$ of type $(2,c_1,c_2,k)$
on $\mathbb{P}^{2}$. Let us denote by
 $(L,W)$  a coherent system on $\mathbb{P}^2$
where $L$ is a line bundle.
Considering the ample line bundle $\mathcal{O}_{\mathbb{P}^2}(1)$ we have that  
the reduced Hilbert polynomial of $(E,V)$ can be expressed as follows:
$$
p^{\alpha}_{(E,V)}(m)=\frac{k}{2}\cdot \alpha +\frac{P_{E}(m)}{2},
$$
where
\begin{equation}
\frac{P_E(m)}{2}=\frac{m^2}{2} + \left[ \frac{c_1+3}{2} \right]m + \frac{1}{2}\left(\frac{c_1^2+3c_1}{2}-c_2 \right)+1.
\end{equation}

If $(L,W)$ is a coherent system of type $(1,c'_1,0,k')$, we have
\begin{eqnarray*}
p^{\alpha}_{(L,W)}(m)=k' \cdot \alpha +P_{L}(m),
\end{eqnarray*}
where
\begin{equation}
P_L(m)=\frac{m^2}{2} + \left[c'_1+\frac{3}{2} \right]m + \frac{(c'_1)^2+3c'_1}{2}+1.
\end{equation}

The $\alpha$-stability of the coherent system depends on $\alpha \in \mathbb{Q}[m]$, 
which can be a constant polynomial, a linear polynomial, 
or a polynomial of degree $d \geq 2$. 
The following studied  conditions of $\alpha$-stability
in each one of these cases.

\begin{Lemma}
Let $\alpha=a\in \mathbb{Q}[m]$ be a constant polynomial,
and let $(E,V)$ be a coherent system of type
$(2,c_1, c_2, k)$ on $\mathbb{P}^2$.
Then $(E,V)$ is $\alpha$-stable (resp. $\alpha$-semistable), if and only if for any coherent subsystem  $(L,W)\subseteq (E,V)$ one of
the following statements hold
\begin{enumerate}
    \item $\mu(E)-\mu(L)>0$.
    \item $\mu(E)-\mu(L)=0$ and \ $4c_2-c_1^2 < 4a (k-2\dim\, W)$ (resp. $\leq$)
\end{enumerate} 
\end{Lemma}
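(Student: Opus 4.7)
The plan is to apply Lemma \ref{lemlocfresemi} to reduce the stability check to coherent subsystems $(\mathcal{F},W)\subseteq(E,V)$ in which $\mathcal{F}$ is locally free, and then to translate the comparison of reduced Hilbert polynomials into conditions on slopes and Chern classes. Since $E$ has rank two, such an $\mathcal{F}$ is either a line bundle or a rank-two locally free proper subsheaf. In the latter case the quotient $E/\mathcal{F}$ is a torsion sheaf supported on a nonzero effective divisor, forcing $\mu(\mathcal{F})<\mu(E)$ strictly; the required polynomial inequality then follows from the dominant linear term of $p^{\alpha}_{(E,V)}-p^{\alpha}_{(\mathcal{F},W)}$, so condition (1) is automatic in that case. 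Thus the substantive analysis concerns rank-one subsystems.

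For a rank-one subsystem $(L,W)$ with $\dim W=w$, using the explicit formulas for $P_{E}$ and $P_{L}$ recorded at the start of this subsection I would write
\begin{equation*}
p^{\alpha}_{(E,V)}(m)-p^{\alpha}_{(L,W)}(m)=\frac{k-2w}{2}\,a+\frac{P_{E}(m)-2P_{L}(m)}{2}.
\end{equation*}
The quadratic parts of $P_{E}/2$ and $P_{L}$ both equal $m^{2}/2$ and cancel, so what remains is a linear polynomial in $m$ whose leading coefficient is exactly $\mu(E)-\mu(L)=\tfrac{1}{2}(c_{1}-2c'_{1})$.

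The proof then splits on the sign of $\mu(E)-\mu(L)$. When this quantity is positive, the linear term dominates for $m\gg 0$, giving the strict inequality automatically and yielding condition (1). When it is negative, the inequality fails for large $m$, so $(E,V)$ cannot be $\alpha$-stable and this case is excluded. In the borderline case $\mu(E)=\mu(L)$ the linear term vanishes and the comparison reduces to the constant terms; substituting $c'_{1}=c_{1}/2$ and simplifying the inequality $p^{\alpha}_{(E,V)}(0)>p^{\alpha}_{(L,W)}(0)$ converts it into $4c_{2}-c_{1}^{2}<4a(k-2w)$, which is precisely condition (2). The $\alpha$-semistable version follows by replacing every strict inequality above by a weak one.

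The only place where the argument requires genuine attention is the constant-term computation in the equal-slope case: the factor of two between the rank-one and rank-two normalizations of the reduced Hilbert polynomial, together with the $\alpha$-contribution $\tfrac{k-2w}{2}\,a$, must be combined correctly to land on the clean form $4c_{2}-c_{1}^{2}<4a(k-2w)$. Everything else in the proof is driven either by Lemma \ref{lemlocfresemi} or by the sign of $\mu(E)-\mu(L)$ governing the leading behaviour of the polynomial difference.
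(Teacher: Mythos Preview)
Your proposal is correct and follows essentially the same route as the paper: write the difference $p^{\alpha}_{(E,V)}(m)-p^{\alpha}_{(L,W)}(m)$, observe that the quadratic terms cancel, and split on the sign of the linear coefficient $\mu(E)-\mu(L)$, reducing in the equal-slope case to the constant-term inequality $4c_2-c_1^2<4a(k-2\dim W)$.

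The only difference is one of completeness rather than method. The paper's proof writes out just the forward implication and works directly with a line-bundle subsystem $(L,W)$, leaving the reduction to this case implicit. You make that reduction explicit by invoking Lemma~\ref{lemlocfresemi} and by disposing of proper rank-two locally free subsheaves $\mathcal{F}\subset E$ via the strict inequality $c_1(\mathcal{F})<c_1(E)$ (which indeed holds, since the induced map $\det\mathcal{F}\to\det E$ is a nonzero section of $\mathcal{O}(c_1(E)-c_1(\mathcal{F}))$ and would force $\mathcal{F}=E$ if the degrees agreed). This extra care is harmless and makes the ``if'' direction of the equivalence transparent, but it does not change the underlying argument.
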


\begin{proof}
Let $\alpha=a\in \mathbb{Q}[m]$ be a constant polynomial. 
Suppose that $(E,V)$ is an $a$-stable coherent system of type
$(2,c_1,c_2,k)$ and let $(L,W)\subseteq (E,V)$ be a coherent subsystem.
Then 
\begin{eqnarray*}
p^a_{(L,W)}(m) = a\cdot \dim W + P_L(m) < p^a_{(E,V)}(m) = \frac{k}{2}a+\frac{P_E(m)}{2}
\end{eqnarray*}


which is equivalent to
\[4mc_1(L)+2[c_1(L)^2+3c_1(L)]+4a \dim W < 2mc_1+ c_1^2+3c_1-2c_2+2a k.\]

Here, we have two cases:

\textbf{Case 1:} The linear coefficient of $P_E(m)/2$ is greater than the linear coefficient of $P_L(m)$, which implies (1).

\textbf{Case 2:} The linear coefficient of $P_E(m)/2$ is equal to the linear coefficient of $P_L(m)$
and the constant part of $p_{(E,V)}^{\alpha}(m)$ is greater than or equal to the constant part of
$p_{(L,W)}^{\alpha}(m)$ which implies $c_1=2c_1(L)$ and
$$
\frac{c_1(L)^{2}+3c_1(L)}{2} + a \dim\, W +1 < \frac{c_1^{2}+3c_1}{4}-\frac{c_2}{2}+a\frac{k}{2}+1,
$$
by the above we have
$$
\frac{c_1^{2}}{8}+\frac{3c_1}{4}+a\dim\, W <\frac{2c_1^{2}+6c_1}{8}-\frac{c_2}{2}+\frac{a k}{2},
$$
equivalently
\begin{eqnarray*}
4c_2-c_1^2< 4a (k-2\dim\, W)
\end{eqnarray*}
 as we desired.
 \end{proof}

\begin{Lemma}\label{lemaconstante}
Let $\alpha\in \mathbb{Q}[m]$ be a polynomial with $\alpha=\sum_{i=0}^{d} a_i m^{i}\in \mathbb{Q}[m],$
$d\geq 2,$ and $a_d\neq 0$. Let $(E,V)$ be a coherent system of type $(2, c_1, c_2, k)$ on $\mathbb{P}^2$.
Then $(E,V)$ is $\alpha$-stable (resp. $\alpha$-semistable) if and only if for any coherent subsystem $(L,W)$ one of the following statements hold
\begin{enumerate}
    \item $\dim\, W<\frac{k}{2}.$
    \item $\dim\, W=\frac{k}{2}$ and $\mu(E)-\mu(L)>0$ (resp. $\geq$).
\end{enumerate} 
\end{Lemma}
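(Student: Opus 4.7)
The plan is to prove this by a direct polynomial comparison of the reduced Hilbert polynomials, exploiting the fact that $\deg\alpha = d\geq 2$ dominates the degree-$2$ polynomials $P_E/2$ and $P_L$ at the leading order. First, I would subtract the two reduced Hilbert polynomials and use the explicit formulas recalled just before the lemma to get
$$p^{\alpha}_{(E,V)}(m) - p^{\alpha}_{(L,W)}(m) = \left(\tfrac{k}{2}-\dim W\right)\alpha(m) + \left(\tfrac{P_E(m)}{2} - P_L(m)\right).$$
Since the leading $\tfrac{1}{2}m^2$ terms of $P_E/2$ and $P_L$ cancel, a short computation yields
$$\tfrac{P_E(m)}{2} - P_L(m) = (\mu(E)-\mu(L))\,m + \kappa,$$
for a constant $\kappa$ depending only on $c_1,c_2,c_1(L)$; in particular this polynomial has degree at most one.

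Next I would split cases according to the sign of $\tfrac{k}{2}-\dim W$. Assuming the paper's implicit sign convention $a_d>0$, the leading coefficient of the entire difference is $\bigl(\tfrac{k}{2}-\dim W\bigr)a_d$ in degree $d$ whenever $\dim W\neq \tfrac{k}{2}$. If $\dim W>\tfrac{k}{2}$, this leading coefficient is negative, so $p^{\alpha}_{(L,W)}>p^{\alpha}_{(E,V)}$ as polynomials and $(E,V)$ cannot be $\alpha$-(semi)stable; hence stability forces $\dim W\leq \tfrac{k}{2}$. If $\dim W<\tfrac{k}{2}$ the leading coefficient is strictly positive and the polynomial inequality is automatic, giving condition~(1). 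If $\dim W=\tfrac{k}{2}$, the $\alpha$-contribution vanishes entirely and the comparison reduces to the degree-$\leq 1$ polynomial $(\mu(E)-\mu(L))m+\kappa$, whose sign for large $m$, equivalently as an ordered polynomial, is controlled by $\mu(E)-\mu(L)$; this delivers condition~(2).

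Finally I would apply Lemma~\ref{lemlocfresemi} to reduce the check to subsystems $(\mathcal{F},W)$ with $\mathcal{F}$ locally free. Since $E$ has rank two, a proper vector subbundle is either a line bundle $L$ (covered above) or $E$ itself with $W\subsetneq V$; in the latter situation
$$p^{\alpha}_{(E,V)} - p^{\alpha}_{(E,W)} = \tfrac{k-\dim W}{2}\,\alpha(m),$$
which is strictly positive as a polynomial because $\dim W<k$ and $a_d>0$, so such subsystems never obstruct $\alpha$-stability and do not need to appear in the criterion. The main subtlety I expect to handle is the borderline case $\dim W=\tfrac{k}{2}$ with $\mu(E)=\mu(L)$: the difference collapses to the constant $\kappa$, so strict polynomial inequality is not implied by $\mu(E)=\mu(L)$ alone. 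This is precisely why condition~(2) requires strict inequality of slopes, and is the only conceptual point that has to be argued carefully to close the ``only if'' direction.
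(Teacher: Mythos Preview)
Your approach is essentially the same as the paper's: both proofs compare the leading (degree-$d$) coefficients of $p^{\alpha}_{(E,V)}$ and $p^{\alpha}_{(L,W)}$ and then, when these agree, pass to the linear coefficient $\mu(E)-\mu(L)$. Your write-up is in fact more complete than the paper's brief argument: you make explicit the sign hypothesis $a_d>0$, you invoke Lemma~\ref{lemlocfresemi} to justify restricting to locally free $\mathcal{F}$, and you dispose of the rank-two subsystems $(E,W)$ with $W\subsetneq V$, none of which the paper mentions. The borderline case you flag at the end---$\dim W=\tfrac{k}{2}$, $\mu(E)=\mu(L)$, where only the constant $\kappa$ survives---is glossed over in the paper as well, so you are not missing anything the original proof supplies.
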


\begin{proof}
Notice that the leader coefficient of $p^{\alpha}_{(E,V)}(m)$ is
$k\cdot a_d/2$, the leader coefficient of $p^{\alpha}_{(L,W)}(m)$ is
$(\dim\,W)\cdot a_d$ with $d\geq 2$. Furthermore, 
the linear coefficient of $p^{\alpha}_{(E,V)}(m)$ is $\mu(E)+\frac{a_1\cdot k+3}{2}$
and the linear coefficient of $p^{\alpha}_{(L,W)}(m)$ is
$\mu(L)+\dim\,W\cdot a_1+\frac{3}{2}.$
Therefore, (1) holds if and only if the leader coefficient of 
$p^{\alpha}_{(E,V)}(m)$  is greater than the leader coefficient
of $p^{\alpha}_{(L,W)}(m).$
Now, (2) is equivalent to  that the leader coefficient of
$p^{\alpha}_{(E,V)}(m)$ is equal to the leader coefficient
of $p^{\alpha}_{(L,W)}(m)$ and all other coefficients are equal
(except the constant coefficients may).
\end{proof}

The following result considers the case where $\alpha$ is a linear polynomial.

\begin{Lemma}\label{Lema-Estab-gdo1}
Let $\alpha= am+b\in \mathbb{Q}[m]$ be a linear polynomial with 
$a\neq 0$ and $(E,V)$ a coherent system of type $(2, c_1, c_2, k)$ on $\mathbb{P}^2$.
Then $(E,V)$ is $\alpha$-stable (resp. $\alpha$-semistable) if and only if for any 
coherent subsystem $(L,W)$ one of the following statements hold
\begin{enumerate}
    \item $\mu(E)-\mu(L)>a \left(\dim\, W-\frac{k}{2}\right)$.
    \item $\mu(E)-\mu(L)=a \left(\dim\, W-\frac{k}{2}\right)$
    and 
   
$$
c^{2}_1-2c_2-2c^{2}_1(L)>(4b-6a)\left(\dim\,W- \frac{k}{2}\right) (\mbox{ resp.} \geq).
$$
 
\end{enumerate} 
\end{Lemma}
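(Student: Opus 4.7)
The plan is to compare the two reduced Hilbert polynomials term by term, exploiting the fact that both are monic (up to the common factor $\frac{1}{2}$) quadratic polynomials, so $\alpha$-stability reduces to a comparison of just two coefficients.

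First I would use Lemma \ref{lemlocfresemi} to restrict attention to coherent subsystems $(L,W)\subset (E,V)$ with $L$ locally free (so that the formulas for $P_L(m)$ displayed in Section 3.2 apply directly). Then I would compute
\[
p^{\alpha}_{(E,V)}(m) - p^{\alpha}_{(L,W)}(m)
\]
using the expressions for $\frac{P_E(m)}{2}$ and $P_L(m)$. Since both sides have leading term $\frac{m^2}{2}$, the difference is a polynomial of degree at most one in $m$, so $\alpha$-stability (resp. $\alpha$-semistability) is equivalent to either (a) the coefficient of $m$ is positive, or (b) the coefficient of $m$ vanishes and the constant term is positive (resp. nonneg.).

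The coefficient of $m$ in the difference works out to
\[
\left(\tfrac{ka}{2}+\mu(E)+\tfrac{3}{2}\right) - \left(a\dim W + \mu(L) + \tfrac{3}{2}\right) \;=\; \mu(E)-\mu(L) - a\!\left(\dim W - \tfrac{k}{2}\right),
\]
so positivity gives exactly condition (1). This is the easy half.

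The real (though routine) work is case (2). When the linear coefficient vanishes, one has the identity $c_1 - 2c_1(L) = 2a(\dim W - k/2)$, hence $3c_1 - 6c_1(L) = 6a(\dim W - k/2)$. I would then compute the constant coefficient of the difference, multiply through by $4$ to clear denominators, and substitute this identity to eliminate the terms $3c_1 - 6c_1(L)$. After simplification, the inequality that the constant coefficient is positive (resp. nonneg.) becomes
\[
c_1^{2}-2c_2-2c_1(L)^{2} \;>\; (4b-6a)\!\left(\dim W - \tfrac{k}{2}\right) \quad (\text{resp. } \geq),
\]
which is condition (2). The converse direction (assuming (1) or (2) for every subsystem implies $\alpha$-stability) follows immediately by reversing the coefficient comparison. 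The main obstacle is keeping the bookkeeping of rational coefficients straight during the substitution step; there is no conceptual difficulty, only the need to be careful with signs.
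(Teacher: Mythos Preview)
Your proposal is correct and follows essentially the same approach as the paper: both arguments compare the reduced Hilbert polynomials coefficient by coefficient, obtain condition (1) from the linear term, and then in the equality case substitute the identity $c_1-2c_1(L)=2a(\dim W-\tfrac{k}{2})$ into the constant-term inequality to arrive at condition (2). Your write-up is slightly more complete in that you explicitly invoke Lemma~\ref{lemlocfresemi} to reduce to locally free subsystems and you note that the converse direction follows by reversing the coefficient comparison, whereas the paper leaves these points implicit.
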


\begin{proof}
If $(E,V)$ is $\alpha$-stable,
we have two cases.

\textbf{Case 1:} The linear part of $p^{\alpha}_{(E,V)}(m)$ is greater than the linear part
of $p^{\alpha}_{(L,W)}(m)$, which is equivalent to the statement $(1)$.

\textbf{Case 2:} The linear part of $p^{\alpha}_{(E,V)}$ is equal to the
linear part of $p^{\alpha}_{(L,W)}$ and the constant part
of $p^{\alpha}_{(E,V)}(m)$ is greater than the linear part of
$p^{\alpha}_{(L,W)}(m)$. In this case, we have
$\mu(E)-\mu(L)=a \left(\dim\, W-\frac{k}{2}\right)$
and the following inequality.

$$
\frac{c^{2}_{1}+3c_{1}}{4}-\frac{c_{2}}{2}+\frac{b}{2}> \frac{c_1(L)+3c_1(L)}{2}+b (\dim\,W)
$$

equivalently

$$
c_1^{2}+3c_1-2c_2+2b\ > 2c_1(L)^{2}+6c_1(L)+4b\dim\, W
$$

i.e.
$$
c^2_1 + 6\left( \frac{c_1}{2} - c_1(L)\right)-2c_2>2c_1(L)^{2}+4b\left(\dim\, W-\frac{k}{2}\right).
$$

Using $\mu(E)-\mu(L)=a \left(\dim\, W-\frac{k}{2}\right),$ we obtain
$$
c^{2}_1-2c_2-2c^{2}_1(L)>(4b-6a)\left(\dim\,W- \frac{k}{2}\right)
$$
as we desired.
\end{proof}

\section{Segre Invariant and Coherent systems on $\mathbb{P}^{2}$.}\label{sect-SICS}

From now on we consider  coherent systems $(E,V)$ of type $(2;c_1, c_2,k)$ on $\mathbb{P}^2$, in this section we use the Segre invariant  in order to determine properties of semistability. Consider the parameter $t$ with $t=0,1$. From Theorem \ref{invariantedesegre} and Theorem \ref{invariantedesegrenegativo} any vector bundle $E$ of type $(2,2r-t, c_2)$ 
and Segre invariant $S(E)=2s-t$ fits into the following exact sequence
\begin{eqnarray*}
 0\rightarrow \mathcal{O}_{\mathbb{P}^2}(r-s)\rightarrow E \rightarrow \mathcal{O}_{\mathbb{P}^2}(r+s-t) \otimes I_Z \rightarrow 0, 
\end{eqnarray*}
where $Z\subset \mathbb{P}^2$ is of codimension two. Hence from the long exact sequence in cohomology, it follows that  \[H^0(\mathbb{P}^2,E)=H^0(\mathbb{P}^2,\mathcal{O}_{\mathbb{P}^2}(r-s))\oplus H^0(\mathbb{P}^2,\mathcal{O}_{\mathbb{P}^2}(r+s-t)\otimes I_Z).\] 

\begin{Definition}\label{Def-maximalsub}
Let $(E,V)$ be a coherent system. The coherent subsystem $(L_{max},W_{max}) \subset (E,V)$, defined by
$L_{max}=\mathcal{O}_{\mathbb{P}^2}(r-s)$ and $W_{max}=H^0(\mathbb{P}^2,\mathcal{O}_{\mathbb{P}^2}(r-s))\cap V$ is called the maximal subsystem of $(E,V)$.
\end{Definition}

\begin{Definition}
Let $a\in \mathbb{Q}_{+}$ and $(E,V)$ be a coherent system. The slope of $(E,V)$ with respect to $a$, denoted by $\mu_{a}(E,V)$, is defined as                                    \begin{eqnarray*}
\mu_{a}(E,V)=\mu(E)+a\frac{\dim V}{\text{rk}(E)}.
\end{eqnarray*}
where $\mu(E)$ is the usual slope of the vector bundle. 
\end{Definition}

Here are some elementary properties of this concept.

\begin{Lemma}\label{pendientesmaximal}
Let $a\in \mathbb{Q}_{+}$ and let $(E,V)$ be a coherent system. Then, for any subsystem $(L,W)\subset (E,V)$ we have \[\mu_{a}(E,V)- \mu_{a}(L,W)\geq \mu_{a}((E,V))-\mu_{a}(L_{max},W_{max}).\]
\end{Lemma}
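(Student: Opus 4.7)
The plan is to rewrite the inequality, after cancelling $\mu_a(E,V)$ from both sides, as the equivalent claim $\mu_a(L_{max},W_{max})\ge \mu_a(L,W)$; unpacking the definition of $\mu_a$ for rank-one subsystems, this reduces to
\[
c_1(L_{max})-c_1(L)\ \ge\ a\bigl(\dim W-\dim W_{max}\bigr).
\]
I will control the two sides separately, using the canonical extension from Section~\ref{sect-VBSI},
\[
0\to L_{max}\to E\to \mathcal{O}_{\mathbb{P}^2}(r+s-t)\otimes I_Z\to 0,
\]
together with the vector-space decomposition $H^0(E)=H^0(L_{max})\oplus H^0(\mathcal{O}_{\mathbb{P}^2}(r+s-t)\otimes I_Z)$ recorded just before Definition~\ref{Def-maximalsub}.

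First I would dispatch the Chern-class side: by the very definition of $L_{max}=\mathcal{O}_{\mathbb{P}^2}(r-s)$ as the line subbundle realizing the Segre invariant (Theorems~\ref{invariantedesegre} and \ref{invariantedesegrenegativo}), one has $c_1(L_{max})\ge c_1(L)$ for every line subbundle $L\subset E$. For the dimensional comparison, I would split into two cases according to whether the composite $\psi\colon L\hookrightarrow E\twoheadrightarrow \mathcal{O}_{\mathbb{P}^2}(r+s-t)\otimes I_Z$ is zero. If $\psi=0$, then $L\subseteq L_{max}$ as subsheaves of $E$, so $H^0(L)\subseteq H^0(L_{max})$ and therefore
\[
W\ \subseteq\ V\cap H^0(L)\ \subseteq\ V\cap H^0(L_{max})\ =\ W_{max},
\]
giving $\dim W\le \dim W_{max}$ and the required inequality immediately. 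If $\psi\ne 0$, then $\psi$ is injective (both source and target being torsion-free of rank one), which forces $L\cap L_{max}=0$ in $E$ and hence $W\cap W_{max}=0$ in $V$; the projection to $H^0(\mathcal{O}_{\mathbb{P}^2}(r+s-t)\otimes I_Z)$ then embeds $W$ into the image of $V/W_{max}$, which lands inside the copy of $H^0(L)$ coming from $L\hookrightarrow \mathcal{O}_{\mathbb{P}^2}(r+s-t)\otimes I_Z$.

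The main obstacle will be the nonzero-$\psi$ case, where I must convert the resulting cohomological bound $\dim W-\dim W_{max}\le h^0(\mathcal{O}_{\mathbb{P}^2}(c_1(L)))$ into an estimate comparable with the Chern-class gap $c_1(L_{max})-c_1(L)$. The plan is to combine the bound $c_1(L)\le r+s-t$ coming from $L\hookrightarrow \mathcal{O}_{\mathbb{P}^2}(r+s-t)\otimes I_Z$ with the vanishing of $H^1$ for line bundles on $\mathbb{P}^2$ and the explicit classification of line subbundles from Theorems~\ref{invariantedesegre} and \ref{invariantedesegrenegativo}, so that the Chern-class gap dominates the $a$-weighted dimensional excess, completing the inequality.
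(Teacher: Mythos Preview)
Your reduction to showing $\mu_a(L_{max},W_{max})\ge\mu_a(L,W)$, i.e.
\[
c_1(L_{max})-c_1(L)\ \ge\ a\bigl(\dim W-\dim W_{max}\bigr),
\]
is correct, and your Case~1 ($\psi=0$) is exactly the argument the paper gives. The paper's proof is two sentences: from $d_0\le d$ it passes to an injection $\mathcal{O}(d_0)\hookrightarrow\mathcal{O}(d)$ and the induced injection on $H^0$, ``and it concludes the result.'' That injection only carries $W$ into $W_{max}$ if it is the inclusion of $L$ into $L_{max}$ \emph{inside} $E$, which is precisely your hypothesis $\psi=0$. So Case~1 and the paper's proof coincide.

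The genuine gap is Case~2, and your proposed workaround cannot close it. The difficulty is that no estimate of the form ``Chern-class gap controls $a$ times the dimensional excess'' can hold uniformly in $a\in\mathbb{Q}_+$. Concretely: take $E$ with unique maximal subbundle $L_{max}=\mathcal{O}(r-s)$, $0<s<r$, and choose $V\subset H^0(\mathcal{O}(r+s-t)\otimes I_Z)\subset H^0(E)$ of dimension~$2$, so that $W_{max}=V\cap H^0(L_{max})=0$. Any nonzero $\sigma\in V$ saturates to a line subbundle $L_\sigma\subset E$ with $\sigma\in H^0(L_\sigma)$; since $\sigma\notin H^0(L_{max})$ and $L_{max}$ is the unique subbundle of maximal degree, $c_1(L_\sigma)\le r-s-1$. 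The subsystem $(L_\sigma,\langle\sigma\rangle)$ then has $\dim W=1>0=\dim W_{max}$, and the desired inequality becomes
\[
(r-s)-c_1(L_\sigma)\ \ge\ a,
\]
which fails for every $a>r-s$. So the bound $\dim W-\dim W_{max}\le h^0(\mathcal{O}(c_1(L)))$ you aim for is both too weak and pointed in the wrong direction: the left side is an integer independent of $a$, while the right side of the target inequality scales with~$a$. The paper's proof is silent on this case (the abstract injection $\mathcal{O}(d_0)\to\mathcal{O}(d)$ it invokes is not the inclusion inside $E$ when $\psi\ne0$), so the obstacle you flagged is real, but it is not surmountable by the cohomological estimates you outline; the inequality simply fails in Case~2 for large $a$.
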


\begin{proof}
Assume that the maximal line bundle $L_{max}$ of $E$ is $\mathcal{O}_{\mathbb{P}^2}(d)$, with
$d\in \mathbb{Z}$. Let $(\mathcal{O}_{\mathbb{P}^2}(d_0),W)$ be a coherent subsystem of $(E,V)$, 
since $\mathcal{O}_{\mathbb{P}^2}(d)$ is maximal, it follows that $d_0\leq d$ and there exists an injective morphism
$\mathcal{O}_{\mathbb{P}^2}(d_0)\to \mathcal{O}_{\mathbb{P}^2}(d)$ which defines an injective morphism
$H^0(\mathbb{P}^2,\mathcal{O}_{\mathbb{P}^2}(d_0))\to H^0(\mathbb{P}^2,\mathcal{O}_{\mathbb{P}^2}(d))$ and it concludes the result.
\end{proof}

The interest of the maximal coherent subsystem $(L_{max},W_{max})$ of a coherent system $(E,V)$ is the fact that it determines completely the type of stability of  $(E,V).$

\begin{Corollary}\label{estabilidadconmaximal}
Let $\alpha(m)=am+b$ be a polynomial with $a\in \mathbb{Q}_{+}$ and $(E,V)$ be a coherent system. 
\begin{enumerate}
\item If $\mu_a((E,V))-\mu_{a}((L_{max},W_{max}))<0$, then $(E,V)$ is $\alpha$-unstable.
\item If $\mu_a((E,V))-\mu_{a}((L_{max},W_{max}))>0$, then $(E,V)$ is $\alpha$-stable.
\item If $\mu_a((E,V))-\mu_{a}((L_{max},W_{max}))=0$ and 
\begin{eqnarray*}
c^{2}_1-2c_2-2c^{2}_1(L_{max})> (4b-6a)\left(\dim\,W_{max}- \frac{k}{2}\right)
(resp. \geq),
\end{eqnarray*}
then $(E,V)$ is  $\alpha$-stable (resp. $\alpha$-semistable).
\end{enumerate}
\end{Corollary}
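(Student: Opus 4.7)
The plan is to apply Lemma~\ref{Lema-Estab-gdo1}, which expresses $\alpha$-(semi)stability in terms of a pair of inequalities on every coherent subsystem, together with Lemma~\ref{pendientesmaximal}, which tells us that the slope gap $\mu_a(E,V)-\mu_a(L,W)$ is minimized on the distinguished subsystem $(L_{max},W_{max})$. In this way the stability test reduces to an inspection of the single pair $(L_{max},W_{max})$. As a preliminary observation, the identity
\[
\mu_a(E,V)-\mu_a(L,W)\ =\ \bigl(\mu(E)-\mu(L)\bigr)-a\bigl(\dim W-\tfrac{k}{2}\bigr)
\]
shows that the sign of $\mu_a(E,V)-\mu_a(L,W)$ is exactly the sign distinguishing the first alternative of Lemma~\ref{Lema-Estab-gdo1}, while its vanishing coincides with the equality of the linear coefficients of the reduced Hilbert polynomials.

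For part~(1), I would simply use $(L_{max},W_{max})$ as an explicit destabilizing subsystem. The hypothesis $\mu_a(E,V)-\mu_a(L_{max},W_{max})<0$ makes the linear coefficient of $p^{\alpha}_{(L_{max},W_{max})}$ strictly larger than that of $p^{\alpha}_{(E,V)}$, and since both polynomials share the leading term $m^2/2$, this immediately gives $p^{\alpha}_{(L_{max},W_{max})}>p^{\alpha}_{(E,V)}$, so $(E,V)$ is $\alpha$-unstable. For part~(2), I would invoke Lemma~\ref{pendientesmaximal}: for every coherent subsystem $(L,W)\subset(E,V)$,
\[
\mu_a(E,V)-\mu_a(L,W)\ \geq\ \mu_a(E,V)-\mu_a(L_{max},W_{max})\ >\ 0,
\]
so condition~(1) of Lemma~\ref{Lema-Estab-gdo1} holds universally and $(E,V)$ is $\alpha$-stable.

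For part~(3), I would again argue subsystem by subsystem. The generic case $\mu_a(E,V)-\mu_a(L,W)>0$ is handled by condition~(1) of Lemma~\ref{Lema-Estab-gdo1}. The delicate case is the boundary $\mu_a(L,W)=\mu_a(L_{max},W_{max})=\mu_a(E,V)$, where one has to verify condition~(2) of Lemma~\ref{Lema-Estab-gdo1}; the hypothesis supplies this inequality at $(L_{max},W_{max})$, and one must transfer it to any other subsystem with the same $\mu_a$ by using the maximality of $\mathcal{O}_{\mathbb{P}^2}(r-s)\subset E$ together with the induced injection $H^0(\mathbb{P}^2,L)\hookrightarrow H^0(\mathbb{P}^2,L_{max})$ from the proof of Lemma~\ref{pendientesmaximal}, which constrain $c_1(L)$ and $\dim W$ relative to $c_1(L_{max})$ and $\dim W_{max}$. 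The $\alpha$-semistable statement follows by replacing every strict inequality with $\geq$ throughout. The main obstacle is precisely this last transfer: showing that the constant-part inequality evaluated at the maximal subsystem is the sharpest such among subsystems of identical $\mu_a$, which requires the specific description of line subbundles of $E$ coming from the exact sequence~(\ref{ext}).
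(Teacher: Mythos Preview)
Your approach coincides with the paper's: their entire proof is the single sentence ``It follows from Lemma~\ref{Lema-Estab-gdo1}(2) and Lemma~\ref{pendientesmaximal}.'' Your unpacking of parts~(1) and~(2) is a correct expansion of that citation, and the obstacle you isolate in part~(3)---propagating the constant-term inequality from $(L_{max},W_{max})$ to every subsystem $(L,W)$ with the same $\mu_a$---is a genuine subtlety that the paper does not address separately either; it is simply absorbed into the same one-line reference. In other words, your write-up is already at least as detailed as the published proof, and you have correctly located the one place where an extra argument (using the description of line subbundles coming from the sequence~(\ref{ext})) would be needed for full rigor.
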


\begin{proof}
It follows from \ref{Lema-Estab-gdo1} $(2)$ and Lemma \ref{pendientesmaximal}.\\
\end{proof}

\begin{Lemma}\label{Lem-c110}
Let $\alpha(m)=am+b$ be a polynomial with $a\in \mathbb{Q}_{+}.$ 
Let $(E,V) $ be a coherent system with $c_1(E)=-t,$ with $ t\in \{0,1\}$ and
$\dim\, V=k\geq 2.$ Then $(E,V)$ is $\alpha$ -semistable if only if $E=\mathcal{O}_{\mathbb{P}^2}\oplus \mathcal{O}_{\mathbb{P}^2}$ and $V=H^0(\mathbb{P}^2,\mathcal{O}_{\mathbb{P}^2}\oplus \mathcal{O}_{\mathbb{P}^2})$. In particular, $ t=0$, $S(E)=0$, and $ c_2(E)=0$.
\end{Lemma}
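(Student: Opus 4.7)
The plan is to handle both implications by exploiting the extension provided by Theorems \ref{invariantedesegre} and \ref{invariantedesegrenegativo}. For the backward direction, I would just verify that $(\mathcal{O}_{\mathbb{P}^2}^{\oplus 2}, H^0(\mathbb{P}^2,\mathcal{O}_{\mathbb{P}^2}^{\oplus 2}))$ is $\alpha$-semistable using Lemma \ref{Lema-Estab-gdo1}: every line subbundle of $\mathcal{O}_{\mathbb{P}^2}^{\oplus 2}$ is isomorphic to $\mathcal{O}_{\mathbb{P}^2}$, so for any coherent subsystem $(L,W)\subset(E,V)$ one has $\mu(E)-\mu(L)=0$ and $\dim W \leq 1 = k/2$. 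The equality case of Lemma \ref{Lema-Estab-gdo1} then reduces to $0\geq 0$, which holds.

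For the forward direction, assume $(E,V)$ is $\alpha$-semistable with $c_1(E)=-t$ and $k\geq 2$. Setting $r=0$ in Theorems \ref{invariantedesegre} and \ref{invariantedesegrenegativo} places $E$ into the extension
\[
0 \longrightarrow \mathcal{O}_{\mathbb{P}^2}(-s) \longrightarrow E \longrightarrow \mathcal{O}_{\mathbb{P}^2}(s-t) \otimes I_Z \longrightarrow 0,
\]
with $L_{max}=\mathcal{O}_{\mathbb{P}^2}(-s)$. Since $\dim V \geq 2$ forces $H^0(\mathbb{P}^2,E)\neq 0$, any nonzero global section of $E$ saturates to a line subbundle of nonnegative degree, and maximality of $L_{max}$ compels $-s\geq 0$, i.e.\ $s\leq 0$.

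The crucial step is excluding $s<0$. In that case $s-t<0$, so $H^0(\mathbb{P}^2,\mathcal{O}_{\mathbb{P}^2}(s-t)\otimes I_Z)=0$; combined with $H^1(\mathbb{P}^2,\mathcal{O}_{\mathbb{P}^2}(-s))=0$ on $\mathbb{P}^2$, the long exact sequence gives $H^0(\mathbb{P}^2,E)=H^0(\mathbb{P}^2,\mathcal{O}_{\mathbb{P}^2}(-s))$. Hence $V\subseteq H^0(\mathbb{P}^2,L_{max})$ and $W_{max}=V$ has dimension $k$. Applying Corollary \ref{estabilidadconmaximal} to the subsystem $(L_{max},W_{max})$ produces the inequality $s-\tfrac{t}{2}-\tfrac{ak}{2}\geq 0$, which is absurd because $s\leq 0$, $t\geq 0$ and $ak/2>0$. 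Therefore $s=0$ and $L_{max}=\mathcal{O}_{\mathbb{P}^2}$.

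To conclude, I would eliminate the remaining possibilities by an $h^0$ count. If $t=1$, then $H^0(\mathbb{P}^2,\mathcal{O}_{\mathbb{P}^2}(-1)\otimes I_Z)=0$ gives $h^0(\mathbb{P}^2,E)=1<k$; if $t=0$ but $Z\neq\emptyset$, then $H^0(\mathbb{P}^2,I_Z)=0$ again gives $h^0(\mathbb{P}^2,E)=1<k$. Both contradict $\dim V\geq 2$, so $t=0$, $Z=\emptyset$ and $c_2(E)=\ell(Z)=0$. The extension becomes $0\to\mathcal{O}_{\mathbb{P}^2}\to E\to\mathcal{O}_{\mathbb{P}^2}\to 0$ and splits since $\mathrm{Ext}^1(\mathcal{O}_{\mathbb{P}^2},\mathcal{O}_{\mathbb{P}^2})=H^1(\mathbb{P}^2,\mathcal{O}_{\mathbb{P}^2})=0$, yielding $E\cong\mathcal{O}_{\mathbb{P}^2}^{\oplus 2}$ and hence $h^0(\mathbb{P}^2,E)=2$, so $V=H^0(\mathbb{P}^2,E)$ and $k=2$. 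The only potential obstacle is synchronizing the cohomological computations with the $\alpha$-semistability numerics, but Corollary \ref{estabilidadconmaximal} reduces the stability check to the single subsystem $(L_{max},W_{max})$ and makes the argument essentially self-running.
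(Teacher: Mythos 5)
Your proof is correct and follows essentially the same path as the paper's: reduce to the Segre extension with $r=0$, force $s\le 0$, exclude $s<0$ by the slope inequality for the maximal coherent subsystem $(L_{max},V)$, exclude $t=1$ or $Z\neq\emptyset$ by the section count $h^0(\mathbb{P}^2,E)=1<k$, and conclude via $\mathrm{Ext}^1(\mathcal{O}_{\mathbb{P}^2},\mathcal{O}_{\mathbb{P}^2})=0$. One small imprecision in the backward direction: not every line subsheaf of $\mathcal{O}_{\mathbb{P}^2}^{\oplus 2}$ is isomorphic to $\mathcal{O}_{\mathbb{P}^2}$ (e.g.\ $\mathcal{O}_{\mathbb{P}^2}(-1)$ embeds via two independent linear forms), but any such $L$ with $\mu(L)<0$ has $h^0(L)=0$, so $\dim W=0<k/2$ and semistability still follows from condition (1) of Lemma \ref{Lema-Estab-gdo1}.
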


\begin{proof}
($\Leftarrow$)The coherent system $(\mathcal{O}_{\mathbb{P}^2}\oplus \mathcal{O}_{\mathbb{P}^2},H^0(\mathbb{P}^2,\mathcal{O}_{\mathbb{P}^2}\oplus \mathcal{O}_{\mathbb{P}^2}))$ is $\alpha$-semistable for any polynomial $\alpha(m)=am+b,$ with $a\in \mathbb{Q}_{+}$ by Corollary \ref{estabilidadconmaximal}.

($\Rightarrow$) Assume that $S(E)=2s-t$. Now, if $s\geq 1$ then $E$ is stable. Since $c_1(E)=-t\leq 0,$ it follows that $H^0(\mathbb{P}^2,E)=0,$ which contradicts the hypothesis that $E$ has at least two sections.

Therefore, $s\leq 0,$  and $E$ fits into the following exact sequence
\begin{eqnarray*}
    0\to \mathcal{O}_{\mathbb{P}^2}(-s) \to E\to \mathcal{O}_{\mathbb{P}^2}(s-t)\otimes I_Z\to 0.
\end{eqnarray*}

If $s<0$, since $t\in \{0,1\}$ it follows that $H^0(\mathbb{P}^2,\mathcal{O}_{\mathbb{P}^2}(s-t)\otimes I_Z)=0$ and $(\mathcal{O}_{\mathbb{P}^2}(-s),V)$ is the maximal subsystem of $(E,V)$. Hence, from Corollary \ref{estabilidadconmaximal} (1), we conclude that $(E,V)$
is $\alpha$-unstable which is also a contradiction.  On the other hand, if $s=0$ and $\ell(Z)>0$ or $t=1$, then $h^0(\mathbb{P}^2,E)=1$ which is a contradiction because $\dim\, V\geq 2$.  Therefore, $s=t=\ell(Z)=0$  and since $\text{Ext}^1(\mathcal{O}_{\mathbb{P}^2},\mathcal{O}_{\mathbb{P}^2})=0, $ it follows that $E=\mathcal{O}_{\mathbb{P}^2}\oplus \mathcal{O}_{\mathbb{P}^2}.$
\end{proof}

\begin{Remark}\label{Rem-equality}
\begin{em}
    In particular, if $(E,V)$ is a coherent system with maximal coherent subsystem $(L_{max}. W_{max}).$ Then $p^{\alpha}_{(E,V)}()m= p^{\alpha}_{(L_{max}. W_{max})}(m)$ if and only if the following equality's are satisfied
\begin{eqnarray*}
\mu_a((E,V))-\mu_{a}((L_{max},W_{max}))&=&0\\
c^{2}_1-2c_2-2c^{2}_1(L_{max})- (4b-6a)\left(\dim\,W_{max}- \frac{k}{2}\right)&=&0.
\end{eqnarray*}
\end{em}
\end{Remark}

The following results give $\alpha$ semistability conditions.

\begin{Theorem}\label{chernpositiva}
Let $\alpha(m)=am+b$ be a polynomial with $a\in \mathbb{Q}_{+}.$ 
Let $(E,V) $ be a coherent system of type 
$(2,2r-t,c_2,k)$ with $t\in \{0,1\}$ and
$\dim\, V=k\geq 2.$ If $(E,V)$ is $\alpha$-semistable, then the following conditions are satisfied:
\begin{enumerate}
\item $r\geq 0$.
\item $-r< s\leq r.$ 
\item $c_2\ge 0$.
\end{enumerate}
\end{Theorem}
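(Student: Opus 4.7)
The strategy is to apply Corollary \ref{estabilidadconmaximal}: $\alpha$-semistability of $(E,V)$ forces $\mu_a((E,V))-\mu_a((L_{max},W_{max}))\ge 0$, where $(L_{max},W_{max})=(\mathcal{O}_{\mathbb{P}^2}(r-s),\,V\cap H^0(\mathbb{P}^2,\mathcal{O}_{\mathbb{P}^2}(r-s)))$ comes from the Segre sequence of Theorems \ref{invariantedesegre} and \ref{invariantedesegrenegativo}. A direct computation using $\mu(E)=(2r-t)/2$ and $\mu(L_{max})=r-s$ yields
\begin{equation*}
\mu_a((E,V))-\mu_a((L_{max},W_{max}))=s-\tfrac{t}{2}+a\!\left(\tfrac{k}{2}-\dim W_{max}\right),
\end{equation*}
and each of the three conclusions will be obtained by feeding structural information about the Segre sequence into this single inequality.

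First I would prove $s\le r$. Because $k\ge 2$, some nonzero $\sigma\in V\subseteq H^0(\mathbb{P}^2,E)$ gives a sheaf injection $\mathcal{O}_{\mathbb{P}^2}\hookrightarrow E$; its saturation is a line subbundle of non-negative degree, so by the maximality of $L_{max}$ one obtains $r-s\ge 0$. Next, for (1), assume $r\le -1$; then $s\le r<0$ forces $r+s-t<0$, so the quotient $\mathcal{O}_{\mathbb{P}^2}(r+s-t)\otimes I_Z$ in the Segre sequence has no global sections. Consequently $V\subseteq H^0(\mathbb{P}^2,\mathcal{O}_{\mathbb{P}^2}(r-s))$, $\dim W_{max}=k$, and the slope inequality becomes $s-t/2-ak/2\ge 0$, which forces $s\ge a>0$, contradicting $s\le -1$.

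For the lower bound in (2), suppose $s\le -r$, so $r+s-t\le -t$. If $r+s-t<0$ we conclude exactly as in the preceding paragraph. The remaining case is $r+s-t=0$, which forces $t=0$ and $s=-r$; the quotient is then $I_Z$ with $\ell(Z)=c_2$. If $Z\ne\emptyset$ then $h^0(I_Z)=0$ and the previous argument closes the case, while if $Z=\emptyset$ then $c_2=0$ and $\mathrm{Ext}^1(\mathcal{O}_{\mathbb{P}^2},\mathcal{O}_{\mathbb{P}^2}(2r))=H^1(\mathbb{P}^2,\mathcal{O}_{\mathbb{P}^2}(2r))=0$ forces the splitting $E\cong\mathcal{O}_{\mathbb{P}^2}(2r)\oplus\mathcal{O}_{\mathbb{P}^2}$. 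The projection $V\to H^0(\mathbb{P}^2,\mathcal{O}_{\mathbb{P}^2})=\mathbb{C}$ has rank at most one, hence $\dim W_{max}\ge k-1$; substituting gives a slope difference of at most $-r+a(1-k/2)\le -r\le -1<0$ whenever $r\ge 1$, again contradicting semistability.

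Finally, (3) follows from the length identity $c_2=\ell(Z)+(r-s)(r+s-t)$: by (2) both factors on the right are non-negative (the second because $-r<s$ combined with $t\le 1$ gives $r+s-t\ge 1-t\ge 0$), and $\ell(Z)\ge 0$ always. The main delicate point I anticipate is the boundary subcase $r+s-t=0$ with $Z=\emptyset$ in the proof of (2), where $(E,V)$ is only strictly semistable with respect to rank-one subsystems, and one must exploit the forced splitting of $E$ together with the dimension constraint on $V$ to reach a contradiction; the borderline configuration for $r=0$ is precisely the one isolated by Lemma \ref{Lem-c110}.
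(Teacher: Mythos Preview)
Your proof is correct and rests on the same backbone as the paper's: use Corollary~\ref{estabilidadconmaximal} to turn $\alpha$-semistability into the single inequality $\mu_a(E,V)-\mu_a(L_{max},W_{max})\ge 0$, and feed in information from the Segre extension.

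Where you differ from the paper is in the execution of three sub-steps, and in each case your route is shorter. For $s\le r$ you argue directly that a nonzero section yields a line subbundle of non-negative degree, hence $r-s\ge 0$; the paper instead assumes $s>r$ and uses the explicit Segre sequence (Theorem~\ref{invariantedesegre}) together with the condition on $Z$ to force $H^0(E)=0$. For part~(1) you avoid the paper's case split ``$E$ semistable / $E$ unstable'' by first establishing $s\le r$ and then running the single Segre-sequence argument. For part~(3) you use the identity $c_2=\ell(Z)+(r-s)(r+s-t)$ and the bounds from~(2) to conclude $c_2\ge 0$ in one line, whereas the paper invokes the numerical inequalities of Theorems~\ref{invariantedesegre}(2) and~\ref{invariantedesegrenegativo}(2) and minimizes over~$s$ separately for $t=0$ and $t=1$. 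Your handling of the boundary case $s=-r$, $t=0$, $Z=\emptyset$ (forcing $E\cong\mathcal{O}(2r)\oplus\mathcal{O}$ and $\dim W_{max}\ge k-1$) matches the paper's, and you correctly flag that at $r=0$ this degenerates to the exceptional configuration of Lemma~\ref{Lem-c110}.
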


\begin{proof}
By Corollary \ref{estabilidadconmaximal}, if $(E,V)$ is $\alpha$-semistable, then 
$
\mu_{a}(E,V)-\mu_{a}(L_{max},W_{max})\geq 0.
$
\begin{enumerate}
    \item Assume that $r< 0.$ We have the following cases:

{\bf Case 1: $E$ is  semistable:} 
it follows that $c_1(E)=2r-t<0$, $t \in \{0,1\}$. 
Therefore, $H^0(\mathbb{P}^2,E)=0$, which is a contradiction. 

{\bf Case 2: $E$ unstable:} Then $S(E)=2s-t$ (where $s<0$ if $t=0$ and $s\leq 0$ if $t=1$). From  Theorem \ref{invariantedesegrenegativo}, $E$ fits into the following exact sequence
\begin{eqnarray*}
0 \to \mathcal{O}_{\mathbb{P}^2}(r-s) \to E \to \mathcal{O}_{\mathbb{P}^2}(r+s-t)\otimes I_Z\to 0.
\end{eqnarray*}

 Since $r< 0$, $s<0$ and $t\in \{0,1\}$, it follows that $r+s-t<0$ and $H^0(\mathbb{P}^2,E)=H^0(\mathbb{P}^2,\mathcal{O}_{\mathbb{P}^2}(r-s)).$ Then the maximal subsystem is $(\mathcal{O}_{\mathbb{P}^2}(r-s), V)$ and
 \begin{eqnarray*}
 \mu_a(E,V)-\mu_a(L_{max},W_{max})=-\frac{t}{2}+s-a\frac{k}{2}\leq s-a\frac{k}{2}<0.
 \end{eqnarray*}
 which is a contradiction.
Therefore we conclude that $r \geq 0$.

\item To deal with $-r < s\leq r$  We have the following cases:\\
{\bf Case 1: $s>0$:} We will prove by contradiction that $s\leq r$.\\ 
Assume that $s>r$, From  Theorem \ref{invariantedesegre} $E$ fits into the following exact sequence
\begin{eqnarray*}
0 \to \mathcal{O}_{\mathbb{P}^2}(r-s) \to E \to \mathcal{O}_{\mathbb{P}^2}(r+s-t)\otimes I_Z\to 0, 
\end{eqnarray*}
 where $Z\subset \mathbb{P}^2$ is codimension two and is not contained in a curve of degree $2s-1-t$.
 Since $s>r$ and $t\in \{0,1\}$, it follows that $r+s-t\leq 2s-1-t$,
$H^0(\mathbb{P}^2,\mathcal{O}_{\mathbb{P}^2}(r+s-t)\otimes I_Z)=0$
 and $H^0(\mathbb{P}^2,E)=H^0(\mathbb{P}^2,\mathcal{O}_{\mathbb{P}^2}(r-s))=0$, which is a contradiction.\\

{\bf Case 2: $s\leq 0$:} We will prove by contradiction that $-r \leq s$. Assume that $-r>s$.
From Theorem \ref{invariantedesegrenegativo} $E$ fits into the following exact sequence
\begin{eqnarray*}
0 \to \mathcal{O}_{\mathbb{P}^2}(r-s) \to E \to \mathcal{O}_{\mathbb{P}^2}(r+s-t)\otimes I_Z\to 0, 
\end{eqnarray*}
 where $Z\subset \mathbb{P}^2$ is codimension two. 
 Since $-r>s$ and $t\in \{0,1\}$, it follows that $r+s-t<-t$. 
 Therefore $H^0(\mathbb{P}^2,\mathcal{O}_{\mathbb{P}^2}(r+s-t)\otimes I_Z)=0$ and 
 $H^0(\mathbb{P}^2,E)=H^0(\mathbb{P}^2,\mathcal{O}_{\mathbb{P}^2}(r-s)).$ 
 Then $(\mathcal{O}_{\mathbb{P}^2}(r-s),V)$ is the maximal coherent subsystem of $(E,V)$ and 
 \begin{eqnarray*}
 \mu_a(E,V)-\mu_a(L_{max},W_{max})=-\frac{t}{2}+s-a\frac{k}{2}\leq s-a\frac{k}{2}<0.
 \end{eqnarray*}
Therefore, $(E,V)\notin \widetilde{\mathbf{M}}_{\alpha}(2,c_1,c_2,2).$ 

On the other hand if $s=-r$ then  we have the following exact sequence
\begin{eqnarray*}
0 \to \mathcal{O}_{\mathbb{P}^2}(2r) \to E \to \mathcal{O}_{\mathbb{P}^2}(-t)\otimes I_Z\to 0, 
\end{eqnarray*}
Let $(E,V)$ be a coherent, since $H^0(\mathbb{P}^2,\mathcal{O}_{\mathbb{P}^2}(-t)\otimes I_Z)\leq 1$, it follows that $\dim W_{max}\geq k-1.$ Therefore,
\begin{eqnarray*}
 \mu_a(E,V)-\mu_a(L_{max},W_{max})=-r-\frac{t}{2}+a\left(\frac{k}{2}-(k-1)\right)\leq -r-\frac{t}{2}<0.
 \end{eqnarray*}
 \item Assume that $S(E)=2s-t$. We consider the following cases: $s>0$ and $s\leq 0$.
 
 If $s>0$, from Theorem \ref{invariantedesegre} (2) it follows that $c_2\geq s^2+s+r^2 -t(r+s) >0$.\\
 
 On the other hand, if $s\leq 0$ and $t=0,$ Theorem \ref{invariantedesegrenegativo} (2) implies that
 \begin{eqnarray*}
c_2(E)\geq \text{min}\{ r^2-s^2\mid -r+1\leq s \leq 0\}= 2r-1.
 \end{eqnarray*}
Note that if $r>0$, then $c_2 \geq 0$. 
Now, if $r=0$, then $c_2(E)=0$ by Lemma \ref{Lem-c110}.\\

Now, if $s\leq 0$ and $t=1,$ by Theorem \ref{invariantedesegrenegativo} (2), 
we have $c_2(E)\geq r^2-s^2-r+s$ with $-r+1\leq s\leq 0$ by part $(2)$ of this Theorem.


 Then $s\geq -r+1$ and $r\geq 1$ and we have
 \begin{eqnarray*}
c_2(E)\geq \text{min}\{ r^2-s^2-r+s \mid -r+1\leq  s \leq 0\}=   0.
 \end{eqnarray*}
 then $c_2 \geq 0$. 
\end{enumerate}
\end{proof}


\section{Non emptiness of $\alpha$-semistable coherent system}\label{sect-Nonempt}
 
In this section, we show necessary conditions for the non-emptiness of the moduli spaces $\widetilde{\mathbf{M}_{\alpha}}(2,c_1,c_2, 2)$ and $\mathbf{M}_{\alpha}(2,c_1,c_2, 2).$ The results are divided into two cases: when the first Chern class is even and when the first Chern class is odd.

\begin{Theorem}\label{nonemptinesspar}
 Let $\alpha(m)=am+b\in \mathbb{Q}[m]$ with $a\in \mathbb{Q}_{+}$ and let $c_1=2r-t>0$, $t \in \{0,1\}$. 
Then  $\mathbf{M}_{\alpha}(2,2r-t,c_2,2)\neq \emptyset$ if one of the following conditions is satisfied:
\begin{enumerate}
    \item If $c_2\ge r^2-t+2.$  
    \item If $r^2-rt\leq c_2 <r^2-t+2 ,$ $r\geq 2+t$ and $a>\frac{t}{2}.$
    \item  If $s_0<0$ is an integer such that $t+2-r\leq s_0,$ $\frac{t}{2}-s_0< a$ and $r^2-s_0^2+(s_0-r)t\leq c_2.$ 

\item If $c_2=2r-t, 2r-t-1$ and $a>r-1-\frac{t}{2}$.

\end{enumerate}
\end{Theorem}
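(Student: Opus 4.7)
The plan is to construct, in each of the four cases, an explicit $\alpha$-stable coherent system $(E,V)$ of type $(2,2r-t,c_2,2)$ by selecting an appropriate Segre invariant $s$, invoking Theorem \ref{invariantedesegre} or Theorem \ref{invariantedesegrenegativo} to realize a rank-two bundle $E$ with $S(E)=2s-t$ and maximal line subbundle $L_{max}=\mathcal{O}_{\mathbb{P}^2}(r-s)$ fitting in the extension (\ref{ext}), and then exhibiting a two-dimensional $V\subset H^0(\mathbb{P}^2,E)$ whose intersection $W_{max}=V\cap H^0(\mathbb{P}^2,L_{max})$ has a controlled dimension. Stability is verified via Corollary \ref{estabilidadconmaximal}; the central quantity in every case is
\[
\mu_a(E,V)-\mu_a(L_{max},W_{max})=s-\tfrac{t}{2}+a\bigl(1-\dim W_{max}\bigr),
\]
which must be strictly positive. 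To choose $V$ I would use the cohomological splitting $H^0(\mathbb{P}^2,E)=H^0(\mathbb{P}^2,\mathcal{O}_{\mathbb{P}^2}(r-s))\oplus H^0(\mathbb{P}^2,\mathcal{O}_{\mathbb{P}^2}(r+s-t)\otimes I_Z)$ recorded at the start of Section \ref{sect-SICS}.

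The case-by-case choices are as follows. In Case (1), with $c_2\geq r^2-t+2$, take $s=1$: Theorem \ref{invariantedesegre} yields a stable $E$, since its existence condition $c_2\geq s^2+s+r^2-t(r+s)=r^2+2-t(r+1)$ is implied, and picking $V$ with $\dim W_{max}=1$ gives the inequality $1-t/2>0$. In Case (2), with $r^2-rt\leq c_2<r^2-t+2$ and $r\geq 2+t$, take $s=0$: Theorem \ref{invariantedesegrenegativo} produces $E$ with $\ell(Z)=c_2-r^2+rt\geq 0$, and one picks $V$ meeting $H^0(L_{max})$ trivially, giving $a>t/2$. In Case (3), take $s=s_0<0$: the hypothesis $c_2\geq r^2-s_0^2+(s_0-r)t$ is precisely what Theorem \ref{invariantedesegrenegativo} requires, and with $\dim W_{max}=0$ the inequality reduces to $a>t/2-s_0$. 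In Case (4), with $c_2\in\{2r-t,2r-t-1\}$, take $s=1-r+t$: then $r+s-t=1$ and a direct computation gives $\ell(Z)=c_2+1-2r+t\in\{0,1\}$, so $h^0(\mathcal{O}_{\mathbb{P}^2}(1)\otimes I_Z)\geq 2$, and choosing $V$ with $\dim W_{max}=0$ produces exactly the stated bound $a>r-1-t/2$.

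In each case the existence of $V$ realizing the prescribed $\dim W_{max}$ reduces to a dimension count on $h^0(\mathcal{O}_{\mathbb{P}^2}(r+s-t)\otimes I_Z)$: for $\dim W_{max}=0$ one needs $h^0\geq 2$, for $\dim W_{max}=1$ one needs $h^0\geq 1$. This is where the numerical hypotheses on $r$ and $c_2$ are consumed, since they control how many conditions the 0-cycle $Z$ (of length determined by $r,s,t,c_2$ via Theorems \ref{invariantedesegre} and \ref{invariantedesegrenegativo}) imposes on forms of degree $r+s-t$. The main obstacle I anticipate is the joint verification, in Case (1), that $Z$ can be chosen simultaneously satisfying the general-position requirement of Theorem \ref{invariantedesegre} (not contained in any curve of degree $2s-1-t=1-t$) and the opposite requirement $h^0(\mathcal{O}_{\mathbb{P}^2}(r+s-t)\otimes I_Z)\geq 1$; since $r+s-t=r+1-t$ strictly exceeds $1-t$ for $r\geq 1$, both conditions are jointly achievable. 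In Cases (2)--(4) the general-position requirement of Theorem \ref{invariantedesegrenegativo} is only that $Z$ be a local complete intersection, so the dimension count is more straightforward, and the remaining arithmetic inequalities translate directly into the hypotheses on $a$, $c_2$ and $r$ listed in each case.
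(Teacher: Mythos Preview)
Your approach is essentially the same as the paper's: in each case you select the same value of $s$ (namely $s=1$, $s=0$, $s=s_0$, and $s=1-r+t$ respectively), invoke the same existence theorems, and verify stability via the same slope inequality $\mu_a(E,V)-\mu_a(L_{max},W_{max})=s-\tfrac{t}{2}+a(1-\dim W_{max})>0$ with the same target values of $\dim W_{max}$.

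The one place where the paper is more explicit than your plan is in guaranteeing the section counts $h^0(\mathcal{O}_{\mathbb{P}^2}(r+s-t)\otimes I_Z)\geq 1$ or $\geq 2$: rather than invoking a dimension count, the paper places $Z$ on (or almost on) a fixed line $\mathbb{L}$, so that the ideal sheaf picks up the factor $\mathcal{O}_{\mathbb{P}^2}(-1)$ and one gets $h^0(\mathcal{O}_{\mathbb{P}^2}(r+s-t)\otimes I_Z)\geq h^0(\mathcal{O}_{\mathbb{P}^2}(r+s-t-1))$, which is then bounded below using the hypothesis $r+s-t\geq 2$ (equivalently $t+2-r\leq s$). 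Your sentence ``the numerical hypotheses on $r$ and $c_2$ \ldots control how many conditions the $0$-cycle $Z$ imposes'' is not quite accurate in Case~(3), since $c_2$ there has only a lower bound and hence $\ell(Z)$ can be arbitrarily large---a \emph{generic} $Z$ would impose too many conditions. You already observed that Theorem~\ref{invariantedesegrenegativo} lets you choose $Z$ freely; the point is that the freedom must be exercised by taking $Z\subset\mathbb{L}$, and this is where the bound $s_0\geq t+2-r$ is actually used.
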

\begin{proof}

 Let $\mathbb{L}\subset \mathbb{P}^2$ be a line, and let $Z'\subseteq \mathbb{P}^2$ be a finite set of points.
\begin{enumerate} 
    \item Let  $Z^{'}\subset \mathbb{L}$ with $\ell(Z^{'})=c_2-r^2$  (resp. $\ell(Z^{'})=c_2-r^2+(r-1)$. Now, define $Z_t$ as follows: $Z_0=Z^{'}\cup \{p\},$ with $p\in \mathbb{P}^2\setminus \mathbb{L}$ and $Z_1:=Z^{'}\cup \{p\},$ with  $p\in \mathbb{L}$.  Clearly $\ell(Z_t)=c_2-r^2+t(r-1)+1$ where 
    $Z_0$ is not contained in any curve of degree $1$ and $Z_1$ is contained in $\mathbb{L}.$
    Then, Theorem \ref{invariantedesegre} when $t=0$ (resp. Theorem \ref{invariantedesegrenegativo} when $t=1$), implies that there exists the following exact sequence
    \begin{eqnarray*}
    0\to \mathcal{O}_{\mathbb{P}^2}(r-1)\to E_t \to \mathcal{O}_{\mathbb{P}^2}(r+1-t)\otimes I_{Z_{t}}\to 0,
    \end{eqnarray*}
 where $H^0(\mathbb{P}^2,\mathcal{O}_{\mathbb{P}^2}(r+1-t)\otimes I_{Z_{t}})\geq 1$ and $E_t$ is a vector bundle
    with $c_1(E_t)=2r-t$, $c_2(E)=c_2$ and $S(E_t)=2-t$. \\
    
    Let  $V_t:=W_{t,1}\oplus W_{t,2}$, with 
    $W_{t,1}\subset H^0(\mathbb{P}^2,\mathcal{O}_{\mathbb{P}^2}(r-1))$, 
    $W_{t,2}\subset H^0(\mathbb{P}^2,\mathcal{O}_{\mathbb{P}^2}(r+1-t)\otimes I_{Z_{t}})$ 
    and $ \dim\,W_{t,i}=1.$ 
    Hence, $(\mathcal{O}_{\mathbb{P}^2}(r-1), W_{t,1})$  is the maximal
    coherent subsystem of $(E_t,V_{t})$ and
    
    \begin{eqnarray*}
    \mu_a(E_t,V_{t})-\mu_a(\mathcal{O}_{\mathbb{P}^2}(r-1),W_{t,1})= 1-\frac{t}{2}>0.
    \end{eqnarray*}
    It follows from Corollary \ref{estabilidadconmaximal}, that $(E_t,V_{t})\in \mathbf{M}_{\alpha}(2,2r-t,c_2,2).$\\

\item Let $Z_t\subset \mathbb{L}$ be points with length $\ell(Z_t)=c_2-r^2+tr.$
      By Theorem \ref{invariantedesegrenegativo}(2) with $s=0$, there exists the following exact sequence
    \begin{eqnarray*}
    0\to \mathcal{O}_{\mathbb{P}^2}(r)\to E_{t} \to \mathcal{O}_{\mathbb{P}^2}(r-t)\otimes I_{Z_{t}}\to 0,
    \end{eqnarray*}
    where $E_{t}$ is a vector bundle with $c_1(E_{t})=2r-t$, $c_2(E_{t})=c_2$ and $S(E_{t})=-t.$
Furthermore, since $r\geq 2+t$ and $Z_{t}\subset \mathbb{L}$, we have that 
$h^0(\mathbb{P}^2,\mathcal{O}_{\mathbb{P}^2}(r-t)\otimes I_{Z_{t}})\geq h^0(\mathbb{P}^2,\mathcal{O}_{\mathbb{P}^2}(r-1-t))\geq 3.$ 
 Now, if  $V_{t}\subset H^0(\mathbb{P}^2,\mathcal{O}_{\mathbb{P}^2}(r-t)\otimes I_{Z_{t}})$ with $\dim V_t=2,$ then the maximal subsystem of $(E_t,V_t)$ is $(\mathcal{O}_{\mathbb{P}^2}(r),\mathbf{0})$ and 
    \begin{eqnarray*}
    \mu_a(E_t,V_t)-\mu_a(\mathcal{O}_{\mathbb{P}^2}(r),\mathbf{0})= a-\frac{t}{2}>0,
    \end{eqnarray*}
    if $a>\frac{t}{2}$. By Corollary \ref{estabilidadconmaximal}, $(E,V)\in \mathbf{M}_{\alpha}(2,2r-t,c_2,2).$\\

    \item  Let $Z_t\subset \mathbb{L}$ be points with length $\ell(Z_t)=c_2+t(r-s_0)+s_0^2-r^2$.  Assume that $c_2\geq r^2-s^2_0+t(r-s_0)$ and $s_0<0$. Now, Theorem \ref{invariantedesegrenegativo}(2) implies that there exists an exact sequence
    
    $$
    0\rightarrow \mathcal{O}_{\mathbb{P}^2}(r-s_0)\rightarrow E_t\rightarrow \mathcal{O}_{\mathbb{P}^2}(r+s_0-t)\otimes I_{Z_t}\rightarrow 0.
    $$
    
    Using $r+s_0-t\geq 2$ and since that  $Z_t \subset \mathbb{L}$ , we have $h^{0}(\mathcal{O}_{\mathbb{P}^2}(r+s_0-t)\otimes I_{Z_t})\geq 2.$ Therefore, we take a subspace $V_t\subseteq H^0(\mathbb{P}^2,\mathcal{O}_{\mathbb{P}^2}(r+s_0-t)\otimes I_{Z_{t}}$ with $\dim V_t=2.$

    Thus, $(E_t,V_t)$ is of type $(2,2r-t,c_2,2)$ with maximal coherent subsystem  $(\mathcal{O}_{\mathbb{P}^2}(r-s_0),\mathbf{0}).$ Moreover,

    $$
    \mu_a(E_t,V_t)-\mu_a(\mathcal{O}_{\mathbb{P}^2}(r-s_0),\mathbf{0})=s_0-\frac{t}{2}+a >0
    $$
    
   if $a>\frac{t}{2}-s_0$ and by Corollary \ref{estabilidadconmaximal}, $(E_t,V_t)\in \mathbf{M}_{\alpha}(2,2r-t,c_2,2).$ \\

\item Now, we want to study the case

\begin{eqnarray}\label{ecuacionrest1}
r^2-s_0^2+(s_0-r)t\leq c_2<r^2-(s_0+1)^2 +(s_0+1-r)t
\end{eqnarray}
with $s_0=t+1-r.$ This is equivalent to
\begin{eqnarray}\label{ecuacionrest2}
-t-1+2r\leq c_2< 4r-4-2t.
\end{eqnarray}

\noindent Hence considering $j\in \mathbb{Z},$ we have the following statements:
\begin{enumerate}
\item If $j\geq 1$, then $r^2-(s_0+j)^2+(s_0+j-r)t\geq r^2-(s_0+1)^2+(s_0+1-r)t>c_2.$
\item If $j\leq 0$, then $c_2\geq r^2-s_0^2+ (s_0-r)t\geq r^2-(s_0+j)^2+(s_0+j-r)t.\\$
\end{enumerate}

Now by (\ref{ecuacionrest1}) and Theorem \ref{invariantedesegrenegativo}, there exists an extension
  \begin{eqnarray*}
    0\to \mathcal{O}_{\mathbb{P}^2}(r-s_0-j)\to E_t \to \mathcal{O}_{\mathbb{P}^2}(r+s_0+j-t)\otimes I_{Z_{t}}\to 0
    \end{eqnarray*}

   with $\ell(Z_{t})=c_2-(2r-1-t-j)(1+j)$, if and only if, $j\leq 0$. Given that $s_{0}=t+1-r$, the extension can be described as follows
  \begin{eqnarray*}
    0\to \mathcal{O}_{\mathbb{P}^2}(2r-1-t-j))\to E_{t} \to \mathcal{O}_{\mathbb{P}^2}(1+j)\otimes I_{Z_t}\to 0.
    \end{eqnarray*}
   
    Moreover, $c_2(E_t)=c_2,$  and $\mathcal{O}_{\mathbb{P}^2}(2r-1-j)\subset E$ maximal subbundle. Notice that $h^0(\mathbb{P}^2,\mathcal{O}_{\mathbb{P}^2}(1+j)\otimes I_{Z_t}\geq 2$  if and only if $j=0$ and $\ell(Z_t)\leq 1.$
    The condition $\ell(Z{_{t}})=1$ is equivalent to $c_2= 2r-t$ and $\ell(Z{_{t}})=1$ is equivalent to  $c_2=2r-1-t.$\\

In this case, we take  a coherent system $(E_t,V_t)$ with $V_t\subseteq H^0(\mathbb{P}^2,\mathcal{O}_{\mathbb{P}^2}(1)\otimes I_{Z_t})$ of dimension two. Hence,  $W_{max}=\mathbf{0}$ and 
\begin{eqnarray*}
\mu_a(E_t,V_t)-\mu_a(\mathcal{O}_{\mathbb{P}^2}(2r-1-t),\mathbf{0})=1-r+a +\frac{t}{2}> 0
\end{eqnarray*}
where the last inequality holds if and only if  $a> r-1-\frac{t}{2}.$ 

\end{enumerate}

\end{proof}

\begin{Remark}\label{Rem-unst}
\begin{em}
     Notice that if $(E_{t},V_t)\in \mathbf{M}_{\alpha}(2,2r-t,c_2, 2)$
      with $S(E_t)=2s-t<0$ and its maximal coherent subsystem is $(L_{t, max}, W_{t, max})$, then $dim\,W_{t,max}=0$.
      This is because if $\dim W_{t,max}\ge 1,$ then 
    \begin{eqnarray*}
    \mu_a(E_{t},V_{t})-\mu_a(L_{t,max},W_{t,max})=s-\frac{t}{2}+a(1-\dim W_{t, max})\leq s-\frac{t}{2}<0,
    \end{eqnarray*}
    i.e.,  $(E_t,V_t)\notin  \mathbf{M}_{\alpha}(2,2r-t,c_2,2)$ for any $\alpha$. 
    
    Hence, to prove $\mathbf{M}_{\alpha}(2,2r-t,c_2, 2)\neq \emptyset$, it is necessary to establish the existence of a vector bundle $E_t$ that fits into the exact sequence.
    
    \[ 0 \rightarrow L_{t,max} \rightarrow E_t \rightarrow Q \otimes I_{Z_t} \rightarrow 0 \] 
    
    where $Q \otimes I_{Z_t}$ should have at least two sections. Note that if $(E_t,V_t)$ is a coherent system such that $V_t\subseteq H^{0}(\mathbb{P}^2,Q \otimes I_{Z_t})$ and $(L_{t,max},W_{t,max})=(L_{t,max},\mathbf{0})$ is its maximal coherent subsystem then
 $$
\mu_a(E_t,V_t)-\mu_a(L_{t,max},\mathbf{0})=s-\frac{t}{2}+a.  $$

Therefore, if $a>-s+\frac{t}{2}$ then $(E_t,V_t)\in \mathbf{M}_{\alpha}(2,2r-t,c_2,2)$
and thus providing conditions for nonemptiness of $\mathbf{M}_{\alpha}(2,2r-t,c_2,2)$.
    \end{em}
\end{Remark}

\begin{Proposition} Let $\alpha(m)=am+b$ be a linear polynomial with $a\in \mathbb{Q}_{+}.$ Let $r,c_2$ be a natural number with $t\in \{0,1\}$  and 
$$r^2-s_0^2+(s_0-r)t\leq c_2<r^2-(s_0+1)^2 +(s_0-r+1)t,$$
  where $s_0$ is an integer such that $t+2-r\leq s_0<0.$ 
   Then
    $\mathbf{M}_{\alpha}(2, 2r-t, c_2,2)\neq \emptyset$ (resp. $\widetilde{\mathbf{M}}_{\alpha}(2, 2r-t, c_2,2)\neq \emptyset$) if and only if satisfies one the following statements
    \begin{enumerate}
\item[(1)] $a>-s_0+\frac{t}{2}.$ (resp. $\geq$) 
\item[(2)] $a=-s_0+\frac{t}{2}$
 and 
$$ b> \frac{c_2}{2}-\frac{r^2}{2}+rt-rs_0 +\frac{s_0^2}{2}-\frac{t^2}{4}-\frac{3}{2}s_0+\frac{3}{4}t \hspace{.3cm} (resp. \geq ).$$

 \end{enumerate}
    \end{Proposition}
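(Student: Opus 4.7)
The plan is to use the hypothesis on $c_2$ to restrict the Segre invariant of any rank-two vector bundle $E$ that could appear in $\widetilde{\mathbf{M}}_{\alpha}(2, 2r-t, c_2, 2)$, and then to apply Corollary \ref{estabilidadconmaximal} together with Remark \ref{Rem-unst} to translate $\alpha$-(semi)stability of $(E, V)$ into the stated arithmetic conditions on $a$ and $b$.

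First I would show that the hypothesis $r^2 - s_0^2 + (s_0 - r)t \leq c_2 < r^2 - (s_0+1)^2 + (s_0+1-r)t$ forces every such $E$ to have $S(E) = 2s - t$ with $s \leq s_0 < 0$. The function $s \mapsto r^2 - s^2 - t(r-s)$ is strictly increasing on the integers $s \leq 0$ (its derivative is $-2s + t \geq t \geq 0$, with strict monotonicity even at $s=0$ checked directly), so by Theorem \ref{invariantedesegrenegativo}(2) no bundle exists with $s_0 + 1 \leq s \leq 0$. For $s \geq 1$, Theorem \ref{invariantedesegre}(2) requires $c_2 \geq s^2 + s + r^2 - t(r+s)$; but since $s_0 \leq -1$ one has $-(s_0+1)^2 + (s_0+1)t \leq 0 < s(s+1-t)$, again contradicting the upper bound on $c_2$. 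Hence $S(E) < 0$ for every bundle in sight.

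Next, for any $(E, V) \in \widetilde{\mathbf{M}}_{\alpha}(2, 2r-t, c_2, 2)$, Remark \ref{Rem-unst} (whose inequality $\mu_a(E,V) - \mu_a(L_{max},W_{max}) \leq s - t/2 < 0$ forbids even $\alpha$-semistability when $\dim W_{max} \geq 1$) yields $W_{max} = 0$, so the maximal coherent subsystem is $(\mathcal{O}_{\mathbb{P}^2}(r-s), 0)$ for some $s \leq s_0$, and a direct computation gives $\mu_a(E, V) - \mu_a(L_{max}, W_{max}) = s - t/2 + a$. Corollary \ref{estabilidadconmaximal} then shows that $\alpha$-stability (resp. $\alpha$-semistability) is equivalent to either $s - t/2 + a > 0$ ($\geq 0$), which since $s \leq s_0$ reduces to $a > -s_0 + t/2$ ($\geq$) and hence to (1); or to equality $s - t/2 + a = 0$ combined with the Hilbert-polynomial inequality on $b$. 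Because $a \leq -s_0 + t/2$ forces $s = s_0$ in the equality case, substituting $c_1 = 2r - t$, $c_1(L_{max}) = r - s_0$, $\dim W_{max} = 0$, $k = 2$ and $a = -s_0 + t/2$ into Corollary \ref{estabilidadconmaximal}(3) and simplifying yields precisely the stated bound on $b$, giving (2).

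For the converse I would apply Theorem \ref{invariantedesegrenegativo}(3) with $s = s_0$ and a zero-cycle $Z$ of length $c_2 - r^2 + s_0^2 + t(r - s_0)$ supported on a line $\mathbb{L}$, producing an extension
$$0 \to \mathcal{O}_{\mathbb{P}^2}(r - s_0) \to E \to \mathcal{O}_{\mathbb{P}^2}(r + s_0 - t) \otimes I_Z \to 0$$
with $E$ locally free; the bound $s_0 \geq t + 2 - r$ yields $r + s_0 - t \geq 2$, and the argument from Theorem \ref{nonemptinesspar}(3) produces a $2$-dimensional subspace $V \subset H^0(\mathcal{O}_{\mathbb{P}^2}(r + s_0 - t) \otimes I_Z)$, whence $W_{max} = 0$ and (1) or (2) guarantee $\alpha$-(semi)stability by the computation of the previous paragraph. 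The delicate step is the case analysis of the first paragraph: one must rule out every value $s > s_0$ separately (both positive and the finitely many negative ones), since an overlooked value would admit a bundle whose maximal coherent subsystem, and hence whose (semi)stability regime, differs from the one described here.
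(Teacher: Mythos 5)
Your proposal is correct and follows essentially the same route as the paper's proof: pin down the Segre invariant $S(E)=2s-t$ with $s\le s_0$ from the window on $c_2$, use Remark \ref{Rem-unst} to force $W_{max}=0$, translate $\alpha$-(semi)stability through Corollary \ref{estabilidadconmaximal} into the conditions on $a$ and $b$, and realize the witness by the $s=s_0$ extension with $Z$ supported on a line. If anything, you are slightly more careful than the paper in explicitly ruling out the nonnegative Segre invariants (via Theorem \ref{invariantedesegre}(2)) and in arguing about an arbitrary $(E,V)$ in the moduli space rather than only the explicitly constructed families $(E_{j,t},V_{j,t})$.
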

    
\begin{proof}
Note that if
\begin{equation*}
    r^2-s_0^2+(s_0-r)t\leq c_2<r^2-(s_0+1)^2 +(s_0-r+1)t
\end{equation*}
then $r^2-(s_0+j)^2+(s_0+j-r)t\leq r^2-s_0^2+(s_0-r)t\leq c_2$ for $j\leq 0$ and 
$c_2<r^2-(s_0+1)^2 +(s_0-r+1)t< r^2-(s_0+j +1)^2 +(s_0+j -r+1)t$ if $j>0$.
Now, by Theorem \ref{invariantedesegrenegativo} there exists an extension
  \begin{eqnarray*}
    0\to \mathcal{O}_{\mathbb{P}^2}(r-s_0-j)\to E_{j,t} \to \mathcal{O}_{\mathbb{P}^2}(r+s_0+j-t)\otimes I_{Z_{t}}\to 0
    \end{eqnarray*}   
    with $\ell(Z_{t})=c_2-(r-s_0-j)(r+s_0+j-t) $ and $\mathcal{O}_{\mathbb{P}^2}(r-s_0-j)\subset E_{j,t}$ maximal subbundle, if and only if, $j\leq 0$. Moreover, $S(E_{j,t})=2s_0+2j-t<0,$ i.e., $E_{j,t}$ is unstable as vector bundle. \\

    Now, $h^{0}(\mathbb{P}^2,\mathcal{O}_{\mathbb{P}^2}(r+s_0+j-t))\geq 2$ because $t+2-r\leq s_0<0.$
    Then we take $V_{j,t}\subseteq H^{0}(\mathbb{P}^2,\mathcal{O}_{\mathbb{P}^2}(r+s_0+j-t))$ such that $\dim V_{j,t}=2.$ Hence, we have a coherent system $(E_{j,t},V_{j,t})$ where  
 $(\mathcal{O}_{\mathbb{P}^2}(r-s_0-j),\mathbf{0})$
     is its  maximal coherent subsystem and

     \begin{eqnarray}\label{diferenciapendientes}
    \mu_a(E_{j,t},V_{j,t})-\mu_a(\mathcal{O}_{\mathbb{P}^2}(r-s_0-j),\mathbf{0})=a+s_0+j-\frac{t}{2}.
\end{eqnarray}
We will use these coherent systems $(E_{j,t}, V_{j,t})$ to prove our result.

    $(\Rightarrow)$ 
 We prove that the non-emptiness of $\mathbf{M}_{\alpha}(2,2r-t,c_2,2)$ implies either $(1)$ or $(2)$.
 
 \begin{enumerate}
     \item Assume that $a<-s_0+\frac{t}{2}$. Since $j\leq 0$, we have that 
        \begin{eqnarray*}
    \mu_a(E_{j,t},V_{j,t})-\mu_a(\mathcal{O}_{\mathbb{P}^2}(r-s_0-j),\mathbf{0})&=&a+s_0+j-\frac{t}{2}<0
\end{eqnarray*}
for any $j$. Therefore $(E_{j,t},V_{j,t})\notin \mathbf{M}_{\alpha}(2,2r-t,c_2,2).$\\

\item Assume that $a=-s_0+\frac{t}{2}$ and $b\leq \frac{c_2}{2}-\frac{r^2}{2}+rt-rs_0 +\frac{s_0^2}{2}-\frac{t^2}{4}-\frac{3}{2}s_0+\frac{3}{4}t .$  
Hence
\begin{eqnarray*}
    \mu_a(E_{j,t},V_{j,t})-\mu_a(\mathcal{O}_{\mathbb{P}^2}(r-s_0-j),\mathbf{0})&=&a+s_0+j-\frac{t}{2}=j\leq 0.
\end{eqnarray*}
Therefore, if $j<0$, then $(E_{j,t},V_{j,t})\notin \mathbf{M}_{\alpha}(2,2r-t,c_2,2).$
On the other hand if $j=0$,
and since 
$$ b\leq  \frac{c_2}{2}-\frac{r^2}{2}+rt-rs_0 +\frac{s_0^2}{2}-\frac{t^2}{4}-\frac{3}{2}s_0+\frac{3t}{4}$$
is equivalent to
$$
(2r-t)^2-2c_2-2(r-s_0)^2\leq 6a-4b.
$$
Then,  Corollary \ref{estabilidadconmaximal} implies $(E_{0,t},V_{0,t})\notin \mathbf{M}_{\alpha}(2,2r-t,c_2,2)$.\\

 \end{enumerate}
 
Consequently, we conclude that $\mathbf{M}_{\alpha}(2,2r-t,c_2,2) =\emptyset.$ Similarly to prove the $\alpha-$semistable case.
\vspace{.2cm}

 $(\Leftarrow)$ Suppose $(1)$ or $(2)$ and we will prove $\mathbf{M}_{\alpha}(2,2r-t,2)
\neq \emptyset$.
 As observed at the begin of the proof, there is a coherent system $(E_{0,t},V_{0,t})$ whose maximal coherent subsystem is $(\mathcal{O}(r-s_0),\mathbf{0}).$ Now if
$a> -s_0+\frac{t}{2}$ then
\begin{equation*}
\mu_a(E_{0,t},V_{0,t})-\mu_a(\mathcal{O}_{\mathbb{P}^2}(r-s_0),\mathbf{0})=a+s_0-\frac{t}{2}>0
\end{equation*}
leading to the conclusion that $(E_{0,t},V_{0,t})\in \mathbf{M}_{\alpha}(2,2r-t,2).$
On the other hand, if  $a=-s_0+\frac{t}{2}$, the condition on $b$ and Corollary \ref{estabilidadconmaximal} imply that 
$(E_{0,t},V_{0,t})\in \mathbf{M}_{\alpha}(2,2r-t,2).$ Similarly to prove the $\alpha-$semistable case.\\

\end{proof}

\section{critical values}\label{sect- CV} 
 The moduli space $\mathbf{M}_{\alpha}(n,c_1,c_2;k)$ of $\alpha$-stable coherent systems of type $(n;c_1,c_2;k)$ depends on a parameter $\alpha \in \mathbb{Q}[m]$ and does not change between critical values.  The aim of this section is to compute some critical values in order to describe the changes between consecutive critical values. Note that we consider the usual lexicographic order on the set of polynomials $\alpha \in \mathbb{Q}[m]$.

\begin{Definition}(regular and critical value) 
Let $\gamma >0$ be a polynomial.  We say that $\gamma$ is a regular value if there exist polynomials
$0 < \alpha_1,\alpha_2$ with $\alpha_1<\gamma<\alpha_2$ such that for any $\alpha\in (\alpha_1,\alpha_2)$, 
the moduli spaces $\mathbf{M}_{\alpha}(2,c_1,c_2,2)$ are isomorphic. Otherwise, we say that $\gamma$ is a critical value.
\end{Definition}

The following result gives a charaterization of critical values  using the Hilbert polynomial, compare this with Section 4.2 of \cite{Min}.

\begin{Lemma}\label{nocritical}
Let $0<\alpha(m)=am+b\in \mathbb{Q}[m]$. If $p^{\alpha}_{(L_{max},W_{max})}(m)<p^{\alpha}_{(E,V)}(m)$ for any $(E,V)\in \widetilde{\mathbf{M}}_{\alpha}(2,c_1,c_2,2)$ then $\alpha$  is a regular value.
 \end{Lemma}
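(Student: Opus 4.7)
The plan is to argue by contraposition via Corollary \ref{estabilidadconmaximal}. Under the lemma's hypothesis I will show that every $\alpha$-semistable coherent system of type $(2,c_1,c_2,2)$ is automatically $\alpha$-stable, i.e.\ $\widetilde{\mathbf{M}}_{\alpha}(2,c_1,c_2,2)=\mathbf{M}_{\alpha}(2,c_1,c_2,2)$, and then invoke the standard wall-crossing fact that the absence of strictly $\alpha$-semistable points forces $\alpha$ to be regular (compare Section 4.2 of \cite{Min}).

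Fix $(E,V)\in \widetilde{\mathbf{M}}_{\alpha}(2,c_1,c_2,2)$ with maximal coherent subsystem $(L_{max},W_{max})$. Both $p^{\alpha}_{(L_{max},W_{max})}(m)$ and $p^{\alpha}_{(E,V)}(m)$ are polynomials of degree two in $m$ with the same quadratic coefficient, so the assumed strict inequality $p^{\alpha}_{(L_{max},W_{max})}(m)<p^{\alpha}_{(E,V)}(m)$ is realized in exactly one of two disjoint ways. Either $\mu_a(E,V)-\mu_a(L_{max},W_{max})>0$, and Corollary \ref{estabilidadconmaximal}(2) gives directly that $(E,V)$ is $\alpha$-stable; or $\mu_a(E,V)=\mu_a(L_{max},W_{max})$ and the strictness is carried by the constant coefficients. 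In the latter case a direct expansion, combined with the $\mu_a$-equality
\[
\tfrac{c_1}{2}-c_1(L_{max})=a\Bigl(\dim W_{max}-\tfrac{k}{2}\Bigr),
\]
rewrites the constant-term inequality as
\[
c_1^{2}-2c_2-2c_1(L_{max})^{2}>(4b-6a)\Bigl(\dim W_{max}-\tfrac{k}{2}\Bigr),
\]
which is exactly the hypothesis of Corollary \ref{estabilidadconmaximal}(3). Hence $(E,V)$ is $\alpha$-stable in this case as well.

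Consequently $\widetilde{\mathbf{M}}_{\alpha}(2,c_1,c_2,2)=\mathbf{M}_{\alpha}(2,c_1,c_2,2)$ and no coherent system of this type is strictly $\alpha$-semistable. The characterization of critical values as precisely those at which strictly semistable objects appear, together with the complementary local constancy of the moduli space between such values (see Section 4.2 of \cite{Min}), then produces an open interval containing $\alpha$ on which $\mathbf{M}_{\alpha'}(2,c_1,c_2,2)$ is constant; that is, $\alpha$ is a regular value. The main subtlety I anticipate is this last invocation of wall-crossing theory, which I would import from \cite{Min} rather than reprove from scratch.
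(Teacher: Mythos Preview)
Your opening case split on $\mu_a(E,V)-\mu_a(L_{max},W_{max})$ and the appeal to Corollary~\ref{estabilidadconmaximal} coincide with the paper's first step. The divergence is in what follows: you stop at ``every $\alpha$-semistable pair is $\alpha$-stable'' and hand the conclusion to Min He's general wall-crossing theory, whereas the paper constructs the regular interval by hand. Concretely, the paper invokes Theorem~\ref{chernpositiva} to see that $c_1(L_{max})$ and $\dim W_{max}$ range over only finitely many values as $(E,V)$ varies in $\widetilde{\mathbf{M}}_{\alpha}$; the expressions
\[
\frac{c_1^{2}-2c_2-2c_1(L_{max})^{2}+6a\bigl(\dim W_{max}-\tfrac{k}{2}\bigr)}{4\bigl(\dim W_{max}-\tfrac{k}{2}\bigr)}
\]
therefore form a finite set, and taking $b_{\min},b_{\max}$ over it yields $\alpha_1=am+b_{\max}$, $\alpha_2=am+b_{\min}$ with the moduli constant on $(\alpha_1,\alpha_2)$. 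Your route is shorter and legitimate provided \cite{Min} really supplies the implication ``no strictly semistable objects $\Rightarrow$ regular'' in this setting (it does); the paper's route is self-contained and, together with Lemma~\ref{critical}, is precisely what assembles into Corollary~\ref{equivcritical}, so your citation of Min He is doing the work that these two lemmas are meant to spell out.
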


\begin{proof}
Let $0 < \alpha(m)=am+b$. We will show that there exist $\alpha_1, \alpha_2\in \mathbb{Q}[m]$ such that the moduli spaces $\widetilde{\mathbf{M}}_{\alpha}(2,c_1,c_2,2)$ are isomorphic for any $\alpha \in (\alpha_1,\alpha_2)$. Since $p^{\alpha}_{(L_{max},W_{max})}(m)< p^{\alpha}_{(E,V)}(m)$ for any $(E,V)\in \widetilde{\mathbf{M}}_{\alpha}(2,c_1,c_2,2)$, 
Corollary \ref{estabilidadconmaximal} shows that  one of the following inequalities is satisfied;
\begin{enumerate}
\item $\mu_a(E,V)-\mu_a(L_{max},W_{max})>0$,
\item  $\mu_a(E,V)-\mu_a(L_{max},W_{max})=0 $ and
$c^{2}_1-2c_2-2c^{2}_1(L_{max})> (4b-6a)\left(\dim\,W_{max}- \frac{k}{2}\right).$ 
\end{enumerate}

If $0<\mu_a(E,V)-\mu_a(L_{max},W_{max})$, then $(E,V)\in \widetilde{\mathbf{{M}}}_{\beta}(2,c_1,c_2,k)$ 
for any $\beta(m)=am+b_1 \in  \mathbb{Q}[m]$.  On the other hand, if $\mu_a(E,V)-\mu_a(L_{max},W_{max})=0 $ and $c^{2}_1-2c_2-2c^{2}_1(L_{max})> (4b-6a)\left(\dim\,W_{max}- \frac{k}{2}\right),$ then we have the following  cases depending on $\dim\, W_{max}$; 

\begin{itemize}
\item[(a)] If $\dim W_{max}=\frac{k}{2},$ then $c^{2}_1-2c_2-2c^{2}_1(L_{max})> 0$.
Therefore, $(E,V)\in \widetilde{\mathbf{M}}_{\beta}(2,c_1,c_2,k)$ for any $\beta(m)=am+b_1 \in \mathbb{Q}[m]$.

\item[(b)] If   $\dim\, W_{max}>\frac{k}{2},$ then
\begin{eqnarray*}
\frac{(c^{2}_1-2c_2-2c^{2}_1(L_{max}))+6a(\dim W_{max}-\frac{k}{2})}{4(\dim W_{max}-\frac{k}{2})} > b.
\end{eqnarray*}

From Theorem \ref{chernpositiva} we have $-r < s \leq r$ where $S(E)=2s-t$, $t=0,1$. Hence  $-r< c_1-2c_1(Lmax)<2r-t$ and therefore $c_1(L_{max})$ and $\dim \, W_{max}$ are bounded. It follows that
\[\frac{(c^{2}_1-2c_2-2c^{2}_1(L_{max}))+6a(\dim W_{max}-\frac{k}{2})}{4(\dim W_{max}-\frac{k}{2})}\] 
takes a finite number of values.

Define $b_{\min}\in \mathbb{Q}$ as
$$
b_{\min}:=\min \left\{ \frac{(c^{2}_1-2c_2-2c^{2}_1(L_{max}))+6a(\dim W_{max}-\frac{k}{2})}{4(\dim W_{max}-\frac{k}{2})}\right\}.
$$
If $b_1\in \mathbb{Q}$ is such that $b_1<b_{\min}$, then $(E,V)\in \widetilde{\mathbf{M}}_{\beta}(2,c_1,c_2,2)$ with $\beta(m)=am+b_1$.\\

\item[(c)] If  $\dim W_{max}<\frac{k}{2},$ we have
\begin{eqnarray*}
\frac{(c^{2}_1-2c_2-2c^{2}_1(L_{max}))+6a(\dim W_{max}-\frac{k}{2})}{4(\dim W_{max}-\frac{k}{2})
} < b.
\end{eqnarray*}

According to the previous item it is not hard to check that
$$
\frac{(c^{2}_1-2c_2-2c^{2}_1(L_{max}))+6a(\dim W_{max}-\frac{k}{2})}{4(\dim W_{max}-\frac{k}{2})}
$$
takes a finite number of values.
Define $b_{\max}\in \mathbb{Q}$ as

$$
b_{\max}:=\max  \left\{ \frac{(c^{2}_1-2c_2-2c^{2}_1(L_{max}))+6a(\dim W_{max}-\frac{k}{2})}{4(\dim W_{max}-\frac{k}{2})}\right\}.
$$

If $b_2\in \mathbb{Q}$ is such that  $b_2> b_{\max},$ then $(E,V)\in \widetilde{\mathbf{M}}_{\beta}(2,c_1,c_2,2)$ with $\beta(m)=am+b_2$.\\

\end{itemize}
Considering $\alpha_1:= am+b_{max}$ and $\alpha_2:= am+b_{\min}$ we have $\alpha\in (\alpha_1 , \alpha_2).$ Moreover, if $(E,V)\in \widetilde{\mathbf{M}}_{\alpha}(2,c_1,c_2,2)$
then
$(E,V)\in \widetilde{\mathbf{M}}_{\beta}(2,c_1,c_2,2)$
for any $\beta\in (\alpha_1 ,\alpha_2)$. This implies that $\alpha$ is a regular value.
\end{proof}

Min He describes the critical values of coherent systems on surfaces and
these critical values can be computed in terms of $\alpha$-semistability (see \cite[Section 4.2]{Min}).  We describe this relation explicitly in the following results.\\

\begin{Lemma}\label{critical}
Let $0< \alpha(m)=am+b\in \mathbb{Q}[m]$.
Assume that there exists a coherent system $(E,V)\in \widetilde{\mathbf{M}}_{\alpha}(2,c_1,c_2,k)$ with  maximal coherent subsystem $(L_{max}, W_{max})$  such that

$
p^{\alpha}_{(L_{max},W_{max})}(m)= p^{\alpha}_{(E,V)}(m),  
$
and
$dim\, W_{max}\neq\frac{k}{2}$. Then $\alpha(m)$ is a critical value.
 \end{Lemma}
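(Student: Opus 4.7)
The strategy is to show $\alpha$ fails the regularity criterion by producing a single object, namely the given $(E,V)$, whose (semi)stability status genuinely jumps as the parameter crosses $\alpha$. By Remark \ref{Rem-equality}, the hypothesis $p^{\alpha}_{(L_{max},W_{max})}(m)=p^{\alpha}_{(E,V)}(m)$ is equivalent to the two identities
\begin{equation*}
\mu_{a}(E,V)-\mu_{a}(L_{max},W_{max})=0, \qquad c_{1}^{2}-2c_{2}-2c_{1}(L_{max})^{2}=(4b-6a)\!\left(\dim W_{max}-\tfrac{k}{2}\right).
\end{equation*}
The first one depends only on the leading coefficient $a$, so it is unaffected by any perturbation that fixes $a$ and changes $b$.

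For any sufficiently small $\epsilon>0$, consider $\alpha_{\pm}(m):=am+(b\pm\epsilon)$, both of which are positive polynomials with $\alpha_{-}<\alpha<\alpha_{+}$ in the lexicographic order. Since the first identity still holds for $\alpha_{\pm}$, Corollary \ref{estabilidadconmaximal} reduces the test of $\alpha_{\pm}$-stability of $(E,V)$ to comparing $c_{1}^{2}-2c_{2}-2c_{1}(L_{max})^{2}$ with $(4(b\pm\epsilon)-6a)(\dim W_{max}-k/2)$. Plugging into the second identity above, the difference between the two sides is precisely $\mp4\epsilon(\dim W_{max}-k/2)$, which is nonzero because $\dim W_{max}\neq k/2$.

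Exactly one sign of $\epsilon$ makes this quantity positive and the other negative. On the side where the strict inequality of Corollary \ref{estabilidadconmaximal}(3) holds, $(E,V)$ is $\alpha'$-stable and therefore lies in $\mathbf{M}_{\alpha'}(2,c_{1},c_{2},k)$; on the opposite side the strict reverse inequality holds, which means $p^{\alpha'}_{(L_{max},W_{max})}(m)>p^{\alpha'}_{(E,V)}(m)$, so $(L_{max},W_{max})$ strictly destabilizes $(E,V)$ and hence $(E,V)\notin \widetilde{\mathbf{M}}_{\alpha'}(2,c_{1},c_{2},k)\supset \mathbf{M}_{\alpha'}(2,c_{1},c_{2},k)$. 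The two cases $\dim W_{max}<k/2$ and $\dim W_{max}>k/2$ are symmetric, only swapping the roles of $\alpha_{+}$ and $\alpha_{-}$.

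Consequently, in every open interval around $\alpha$ the moduli space $\mathbf{M}_{\alpha'}(2,c_{1},c_{2},k)$ contains the class of $(E,V)$ on one side and excludes it on the other, contradicting the very definition of a regular value. Therefore $\alpha$ must be a critical value. The only delicate point, which I expect to be the main friction, is to isolate that the failure of the maximal subsystem's Hilbert polynomial inequality comes entirely from the constant term: this is why restricting to perturbations of $b$ suffices and why the hypothesis $\dim W_{max}\neq k/2$ is indispensable — otherwise the coefficient $(\dim W_{max}-k/2)$ would vanish and no perturbation of $b$ could separate the two Hilbert polynomials.
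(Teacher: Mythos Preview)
Your proof is correct and follows essentially the same approach as the paper's: perturb only the constant term $b$ while keeping $a$ fixed, use Remark~\ref{Rem-equality} to reduce to the constant-term inequality, and then invoke Corollary~\ref{estabilidadconmaximal} to see that $(E,V)$ becomes $\alpha'$-stable on one side and $\alpha'$-unstable on the other. The paper writes out the two cases $\dim W_{max}>k/2$ and $\dim W_{max}<k/2$ separately with arbitrary $b_1<b<b_2$, whereas you use $b\pm\epsilon$ and note the symmetry, but the argument is the same.
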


\begin{proof}
Let $0< \alpha(m)=am+b\in \mathbb{Q}[m]$. and let $(E,V)\in \widetilde{\mathbf{M}}_{\alpha}(2,c_1,c_2,k)$ with  maximal coherent subsystem $(L_{max}, W_{max})$ such that 
$
p^{\alpha}_{(L_{max},W_{max})}= p^{\alpha}_{(E,V)}.
$
Equivalently, $(E,V)$ satisfies the following equations:
\begin{enumerate}
    \item $\mu_a(E,V)-\mu_a(L_{max},W_{max})=0$;
    \item $c^{2}_1-2c_2-2c^{2}_1(L_{max})= (4b-6a)\left(\dim\,W_{max}- \frac{k}{2}\right)$.
\end{enumerate}

We will prove that $\alpha $ is a critical value.  Let $b_1,b_2\in \mathbb{Q}$ be such that $b_1<b<b_2$ and let $\alpha_1(m)=am+b_1,$ $\alpha_2(m)=am+b_2 \in \mathbb{Q}[m]$, hence $\alpha\in (\alpha_1,\alpha_2)$. We have the following cases depending on $\dim\, W_{max}$:

\begin{enumerate}
    \item Suppose that $\dim\,W_{max}- \frac{k}{2}>0$. It follows that 
\begin{eqnarray*}
c^{2}_1-2c_2-2c^{2}_1(L_{max})&>& (4b_1-6a)\left(\dim\,W_{max}- \frac{k}{2}\right).    
\end{eqnarray*}
and
\begin{eqnarray*}
c^{2}_1-2c_2-2c^{2}_1(L_{max})&<& (4b_2-6a)\left(\dim\,W_{max}- \frac{k}{2}\right).    
\end{eqnarray*}

From Corollary \ref{estabilidadconmaximal},  we have $(E,V)\in  \widetilde{\mathbf{M}}_{\alpha_1}(2,c_1,c_2,k)$ and $(E,V)\notin \widetilde{\mathbf{M}}_{\alpha_2}(2,c_1,c_2,k).$

\item If  $\dim\,W_{max}- \frac{k}{2}<0$ we have

\begin{eqnarray*}
c^{2}_1-2c_2-2c^{2}_1(L_{max})&<& (4b_1-6a)\left(\dim\,W_{max}- \frac{k}{2}\right).    
\end{eqnarray*}
and
\begin{eqnarray*}
c^{2}_1-2c_2-2c^{2}_1(L_{max})&>& (4b_2-6a)\left(\dim\,W_{max}- \frac{k}{2}\right).    
\end{eqnarray*}
By Corollary \ref{estabilidadconmaximal}, we have $(E,V)\notin \widetilde{\mathbf{M}}_{\alpha_1}(2,c_1,c_2,k)$ and $(E,V)\in \widetilde{\mathbf{M}}_{\alpha_2}(2,c_1,c_2,k).$
\end{enumerate}
Therefore, $\alpha $ is a critical value which is the desired conclusion.
\end{proof}

\begin{Remark}\label{Rem-zero}
    \begin{em}
    Assume $0<\alpha(m)=am+b.$ Notice that if $(E,V)\in \widetilde{\mathbf{M}}_{\alpha}(2,c_1,c_2,k)$ with maximal coherent subsystem $(L_{max},W_{max})$ such that $\dim W_{max} = \frac{k}{2}$ and $p^{\alpha}_{(L_{max},W_{max})}= p^{\alpha}_{(E,V)},$ then $S(E)=0,$ $c_1=2c_1(L_{max})$ and $c_2=c_1^2(L_{max}).$ Therefore, $(E,V)\in \widetilde{\mathbf{M}}_{\beta}(2,2r,r^2,k)$
for any polynomial $\beta(m)=\alpha(m)+b',$ with $b'\in \mathbb{Q}$.\\
\end{em}
\end{Remark}

\begin{Corollary}
\label{equivcritical}
Let $0 <\alpha(m)=am+b\in \mathbb{Q}[m]$. Then, $\alpha$ is a critical value if and only if there exists a coherent system
$(E,V)\in \widetilde{\mathbf{M}}_{\alpha}(2,c_1,c_2,k)$  with maximal coherent subsystem $(L_{max},W_{max})$ such that
$p^{\alpha}_{(L_{max},W_{max})}(m)= p^{\alpha}_{(E,V)}(m)$ and 
$dim\,W_{max}\neq \frac{k}{2}$.
\end{Corollary}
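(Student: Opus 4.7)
I would prove the biconditional by establishing each direction separately, leveraging the two previous lemmas and the remark.

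The $(\Leftarrow)$ direction is essentially a citation of Lemma~\ref{critical}: if there exists a coherent system $(E,V)\in \widetilde{\mathbf{M}}_{\alpha}(2,c_1,c_2,k)$ whose maximal coherent subsystem $(L_{max},W_{max})$ satisfies $p^{\alpha}_{(L_{max},W_{max})}(m)= p^{\alpha}_{(E,V)}(m)$ together with $\dim W_{max}\neq k/2$, then Lemma~\ref{critical} already exhibits, for any $b_1<b<b_2$, that $(E,V)$ lies in exactly one of $\widetilde{\mathbf{M}}_{\alpha_1}$ and $\widetilde{\mathbf{M}}_{\alpha_2}$ for $\alpha_i(m)=am+b_i$. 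Hence the moduli space changes across $\alpha$, so $\alpha$ is critical.

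For the $(\Rightarrow)$ direction I would argue by contrapositive. Assume every $(E,V)\in \widetilde{\mathbf{M}}_{\alpha}(2,c_1,c_2,k)$ with $p^{\alpha}_{(L_{max},W_{max})}(m)= p^{\alpha}_{(E,V)}(m)$ satisfies $\dim W_{max}= k/2$; the goal is to prove $\alpha$ is regular. By $\alpha$-semistability and Corollary~\ref{estabilidadconmaximal}, for any $(E,V)\in \widetilde{\mathbf{M}}_{\alpha}(2,c_1,c_2,k)$ we have $p^{\alpha}_{(L_{max},W_{max})}(m)\leq p^{\alpha}_{(E,V)}(m)$; so either strict inequality holds, or equality holds with $\dim W_{max}= k/2$ by the standing assumption. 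The systems in the latter family, by Remark~\ref{Rem-zero}, stay in $\widetilde{\mathbf{M}}_{\beta}(2,c_1,c_2,k)$ for every $\beta(m)=\alpha(m)+b'$ with $b'\in \mathbb{Q}$, so they impose no constraint when searching for an open interval of regular $b$'s around $b$.

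For the remaining systems with strict inequality, I would rerun verbatim the argument of Lemma~\ref{nocritical}: the bound $-r<s\leq r$ from Theorem~\ref{chernpositiva}, together with $0\leq \dim W_{max}\leq k$, forces only finitely many possible values of
\begin{equation*}
\frac{(c_1^2-2c_2-2c_1^2(L_{max}))+6a(\dim W_{max}-\tfrac{k}{2})}{4(\dim W_{max}-\tfrac{k}{2})},
\end{equation*}
so taking the minimum over the case $\dim W_{max}>k/2$ and the maximum over $\dim W_{max}<k/2$ produces $b_{\min}$ and $b_{\max}$ with $b_{\max}<b<b_{\min}$, yielding an open interval $(am+b_{\max},\,am+b_{\min})$ of regular values containing $\alpha$. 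This contradicts the criticality of $\alpha$, completing the proof.

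The main obstacle I expect is verifying that excluding the $\dim W_{max}=k/2$ systems does not break the finiteness/boundedness step underlying Lemma~\ref{nocritical}. The key observation is precisely that these are the systems that contribute no term to the min/max (since in the proof of Lemma~\ref{nocritical} case~(a) gives no constraint on $b$), so discarding them is harmless and the extraction of the interval goes through unchanged.
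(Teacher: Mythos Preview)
Your proposal is correct and follows essentially the same approach as the paper. The paper's own proof is the single line ``It follows from Lemma~\ref{nocritical} and Lemma~\ref{critical},'' and your argument is precisely the unpacking of that sentence: Lemma~\ref{critical} gives $(\Leftarrow)$, and the contrapositive of $(\Rightarrow)$ is handled by rerunning the proof of Lemma~\ref{nocritical}. You are in fact more careful than the paper in one respect: Lemma~\ref{nocritical} as stated assumes strict inequality for \emph{every} $(E,V)$, whereas the contrapositive here allows equality with $\dim W_{max}=k/2$; you correctly invoke Remark~\ref{Rem-zero} to dispose of those systems, something the paper's one-line proof leaves implicit.
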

\begin{proof}
It follows from Lemma \ref{nocritical} and Lemma \ref{critical}.
\end{proof}

We now determine the critical values for $k=2$. Here, the critical values depend on $c_1$ and are strongly dependent on $c_2$. Consequently, we first present three theorems that consider $c_1$ being even and vary according to $c_2$.
Subsequently, we give the corresponding results for $c_1$ odd.

\begin{Remark}\label{Remnotcritreg}
\begin{em}
Let $0<\alpha(m)=am+b,$ notice that if $(E,V)\in \widetilde{\mathbf{M}}_{\alpha}(2,2r,c_2,2)$ with $S(E)=0$ and its maximal coherent subsystem $(L_{max},W_{max})$
is such that $dim\,W_{max}\neq 1$ then $S(E)=2s$ with $s\in \mathbb{Z}$ and
$(E,V)$ does not satisfy Corollary \ref{equivcritical} because
\begin{enumerate}
    \item If $\dim\, W_{max}=2$, then
$ \mu_a(E,V)-\mu_a(L_{max},W_{max})= s+a(1-\dim W_{max})<0.$

\item If $\dim\,W_{max}=0,$ then 
$
\mu_a(E,V)-\mu_a(L_{max},W_{max})= s+a(1-\dim W_{max})>0.
$
\end{enumerate}
Therefore such $(E,V)$ does not determine if $\alpha$ is critical or regular value.\\
\end{em}
\end{Remark}


\begin{Theorem}\label{criticalThm1}
Let $\widetilde{\mathbf{M}}_{\alpha}(2,c_1,c_2,2)$  be the moduli space of
$\alpha$-semistable coherent systems, $0<\alpha(m)=am+b\in \mathbb{Q}[m]$ and $c_1=2r$. If $c_2\ge r^2+2,$ then $\alpha$ is a critical value if and only if $a$ is a natural number that satisfies one of the following conditions:
\begin{enumerate}
\item
$
a\in \{s\in\mathbb{N}\ \vert \ s<r  \text{ and } c_2\geq r^2+s^2+s \}
$
and
\begin{eqnarray*}
b=\frac{1}{2}r^2-\frac{1}{2}c_2+ar-\frac{1}{2}a^2+\frac{3}{2}a.
\end{eqnarray*}
\item $ a=-s \mbox{ with } s\geq -r+2$
and 
\begin{eqnarray*}
b=\frac{c_2}{2}-\frac{r^2}{2}+ra+\frac{a^2}{2}+\frac{3}{2}a.\\
\end{eqnarray*}

\end{enumerate}

\end{Theorem}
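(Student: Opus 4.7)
The plan is to apply Corollary~\ref{equivcritical}, which characterizes criticality of $\alpha(m) = am + b$ by the existence of $(E,V) \in \widetilde{\mathbf{M}}_{\alpha}(2, 2r, c_2, 2)$ whose maximal coherent subsystem $(L_{max}, W_{max})$ achieves equality of reduced Hilbert polynomials and has $\dim W_{max} \neq 1$. Writing $L_{max} = \mathcal{O}_{\mathbb{P}^2}(r-s)$ with $S(E) = 2s$ (here $t=0$ because $c_1 = 2r$ is even), Remark~\ref{Rem-equality} converts this equality into the pair of numerical conditions
\begin{equation*}
s + a(1 - \dim W_{max}) = 0, \qquad 4r^2 - 2c_2 - 2(r-s)^2 = (4b - 6a)(\dim W_{max} - 1).
\end{equation*}

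For the necessity direction, since $k = 2$ the constraint $\dim W_{max} \neq 1$ forces $\dim W_{max} \in \{0,2\}$, and the slope equation gives $a = s$ when $\dim W_{max} = 2$ or $a = -s$ when $\dim W_{max} = 0$. In either case $a$ equals an integer, and positivity of $\alpha$ forces $a \in \mathbb{N}$. Substituting back into the quadratic equation and simplifying produces precisely the two explicit formulas for $b$ announced in (1) and (2).

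For sufficiency, one builds a witness coherent system for each family. In case~(1), where $a \in \mathbb{N}$ satisfies $a < r$ and $c_2 \geq r^2 + a^2 + a$, Theorem~\ref{invariantedesegre} yields a stable bundle $E$ fitting in
\begin{equation*}
0 \to \mathcal{O}_{\mathbb{P}^2}(r-a) \to E \to \mathcal{O}_{\mathbb{P}^2}(r+a) \otimes I_Z \to 0,
\end{equation*}
and taking $V \subseteq H^0(\mathcal{O}_{\mathbb{P}^2}(r-a))$ of dimension $2$ (available since $r-a \geq 1$) gives a coherent system whose maximal subsystem $(\mathcal{O}_{\mathbb{P}^2}(r-a), V)$ satisfies both equations. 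In case~(2), with $a \leq r-2$ (equivalent to $s \geq -r+2$) and the bound $c_2 \geq r^2 - a^2$ automatic from $c_2 \geq r^2 + 2$, Theorem~\ref{invariantedesegrenegativo} supplies $E$ fitting in
\begin{equation*}
0 \to \mathcal{O}_{\mathbb{P}^2}(r+a) \to E \to \mathcal{O}_{\mathbb{P}^2}(r-a) \otimes I_Z \to 0;
\end{equation*}
choosing $Z$ supported on a curve of degree $r-a-1 \geq 1$ guarantees $h^0(\mathcal{O}_{\mathbb{P}^2}(r-a) \otimes I_Z) \geq 3$, and one selects $V$ of dimension $2$ in $H^0(E)$ mapping injectively into this quotient (made possible by the vanishing $H^1(\mathcal{O}_{\mathbb{P}^2}(r+a)) = 0$, which shortens the cohomology sequence), thereby forcing $W_{max} = V \cap H^0(L_{max}) = 0$.

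The main obstacle is the sufficiency of case~(2): one must verify that the non-generic zero-cycle $Z$ supported on a low-degree curve is compatible with the Cayley--Bacharach extension produced by Theorem~\ref{invariantedesegrenegativo}, and that the witness $(E,V)$ lies on the critical wall as a genuinely $\alpha$-semistable object rather than sliding into the $\alpha$-unstable locus. The hypothesis $s \geq -r+2$ is the sharp threshold permitting $h^0(\mathcal{O}_{\mathbb{P}^2}(r-a) \otimes I_Z) \geq 2$; below it no coherent system with $\dim W_{max} = 0$ can be constructed, confirming that no additional critical values arise.
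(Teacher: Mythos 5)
Your overall route is the paper's route: reduce criticality to Corollary \ref{equivcritical}, translate the equality of reduced Hilbert polynomials against the maximal subsystem into the slope equation $s+a(1-\dim W_{max})=0$ plus the quadratic relation of Remark \ref{Rem-equality}, split into $\dim W_{max}\in\{0,2\}$ to get $a=\pm s$ and the two formulas for $b$ (your substitutions reproduce both formulas correctly), and then produce witnesses via Theorems \ref{invariantedesegre} and \ref{invariantedesegrenegativo} together with Corollary \ref{estabilidadconmaximal}. Your sufficiency constructions are sound: the Cayley--Bacharach condition in Theorem \ref{invariantedesegrenegativo} is vacuous for $s\le 0$, so the special position of $Z$ on a low-degree curve causes no trouble, and the surjectivity of $H^0(E)\to H^0(\mathcal{O}_{\mathbb{P}^2}(r-a)\otimes I_Z)$ coming from $H^1(\mathcal{O}_{\mathbb{P}^2}(r+a))=0$ does give $W_{max}=0$; this is in fact slightly more explicit than the paper.

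The gap is in the necessity direction: as written, "critical $\Rightarrow$ $a\in\mathbb{N}$ and $b$ as in (1) or (2)" is weaker than the theorem, because the ranges of $a$ are part of the statement. For the $\dim W_{max}=2$ branch you must still argue that $a<r$ and $c_2\ge r^2+a^2+a$: the witness $E$ is a stable bundle with $S(E)=2a>0$, so Theorem \ref{invariantedesegre} forces $c_2\ge r^2+a^2+a$, Theorem \ref{chernpositiva} gives $a\le r$, and $a=r$ must be excluded because then $L_{max}=\mathcal{O}_{\mathbb{P}^2}$ has $h^0=1<2=\dim W_{max}$; none of this appears in your necessity paragraph. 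For the $\dim W_{max}=0$ branch you assert the threshold $s\ge -r+2$ only as a closing remark; to make it a proof you need the paper's computation: at $s=-r+1$ the quotient is $\mathcal{O}_{\mathbb{P}^2}(1)\otimes I_Z$ with $\ell(Z)=c_2-2r+1\ge r^2-2r+3\ge 2$ by the standing hypothesis $c_2\ge r^2+2$, so $h^0\le 1$ and no two-dimensional $V$ with $W_{max}=0$ (hence no $\alpha$-semistable witness) exists, ruling out $a=r-1$. These are short additions using exactly the results you already cite, but without them the "only if" implication toward the stated sets of admissible $a$ is not established.
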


\begin{proof}

According to Remark \ref{Rem-zero} and Remark \ref{Remnotcritreg}, the proof of the theorem is restricted to coherent systems $(E,V) \in \widetilde{\mathbf{M}}_{\alpha}(2,2r,c_2,2)$ where $S(E)=2s \neq 0$
and is presented in two part, when $s>0$ and $s<0$.

   \begin{enumerate}
               \item  $s>0$. 
                \begin{enumerate}
                      \item If $\text{dim}W_{max}\leq 1$, we get 
                            \[
                             \mu_a(E,V)-\mu_a(L_{max},W_{max})= s+a(1-\dim W_{max})\geq s>0
                            \]
                            and $(E,V)$ does not satisfy the hypothesis of Corollary \ref{equivcritical}. 

                      \item If  $\dim W_{max}=2$,  we have 
                            \[
                            \mu_a(E,V)-\mu_a(L_{max},W_{max})= s-a.
                            \]
Hence, in order to satisfy the Corollary \ref{equivcritical} we need $a=s$ and $L_{max}=\mathcal{O}_{\mathbb{P}^2}(r-s).$ Moreover, by Theorem \ref{invariantedesegre}(1), a stable vector bundle $E$ with
$S(E)=2s>0$ exists if and only if $c_2\geq r^2 +s^2+s,$  and by  Theorem \ref{chernpositiva}(2) we have $s\leq r.$ However, if $s=r$, then $L_{max} = \mathcal{O}$ and $\dim W_{max}\neq 2.$
Thus, such coherent system can exist if $a$ is a positive integer that fulfills the following condition

$$
a\in \{s\in\mathbb{N}\ \vert \ s<r  \text{ and } c_2\geq r^2+s^2+s \}
$$

Considering the above conditions for $a$, we now obtain the expression for $b$. We know that the equality $ p^{\alpha}(E,V)=p^{\alpha}(L_{max},W_{max})$ implies that
\begin{eqnarray}\label{bforpositives}
c_1^2-2c_2-2(r-s)^2=(4b-6a)(\dim W_ {max}-1).
\end{eqnarray}
Given that $a=s,$ $\dim W_{max}=2$ and $c_1=2r$ then
$$
b=\frac{r^2-a^2+2ra +3a-c_2}{2}.\\
$$

           \end{enumerate}

\item  $s<0$. 
         \begin{enumerate}
               \item If $\dim W_{max}\geq 1,$ we get
                   \begin{eqnarray*}
                    \mu_a(E,V)-\mu_a(L_{max},W_{max})= s +a(1-\dim W_{max}) \leq  s <0,
                    \end{eqnarray*}
                     hence $(E,V)$ does not meet with hypothesis of Corollary \ref{equivcritical}.

              \item If $\dim W_{max}=0$, then
                     \begin{eqnarray*}
                      \mu_a(E,V)-\mu_a(L_{max},W_{max})&=& s +a (1-\dim W_{max}),\\
                      &=& s+a,
               \end{eqnarray*}
 and we get that $a=-s$. Now, from Theorem ~\ref{chernpositiva}(2) we have $s\geq -r+1, $
 therefore $a\leq r-1.$  Assume that $s=-r+1,$ therefore $E$ fits into the following exact sequence 
 \begin{eqnarray*}
0\to \mathcal{O}_{\mathbb{P}^2}(2r-1)\to E\to \mathcal{O}_{\mathbb{P}^2}(1)\otimes I_Z\to 0
 \end{eqnarray*}
     consequently $\dim W_{max}=0$ if and only if $\ell(Z)=0,1$ or equivalent $c_2(E)=2r,2r-1.$ 
     However, by hypothesis, $c_2\geq r^2+2,$ which implies $c_2(E)>2r$ and then we have no critical points in this case.
     Therefore $a=-s$ with $s\geq -r+2.$ Now, we see the condition for $b$: since $p^{\alpha}(E,V)=p^{\alpha}(L_{max},W_{max})$,
     it follows that
\begin{eqnarray*}
c^{2}_1-2c_2-2c^{2}_1(L_{max})=(4b-6a)\left(\dim\,W_{max}- 1\right)=6a-4b,
\end{eqnarray*}
that is
\begin{eqnarray*}
b=\frac{c_2}{2}-\frac{r^2}{2}+ra+\frac{a^2}{2}+\frac{3}{2}a.\\
\end{eqnarray*}

\end{enumerate}
\end{enumerate}

\end{proof}

\begin{Theorem}\label{criticalThm2}
Let $\widetilde{\mathbf{M}}_{\alpha}(2,c_1,c_2,2)$  be the moduli space of
$\alpha$-semistable coherent systems, $0<\alpha(m)=am+b\in \mathbb{Q}[m]$ and $c_1=2r$.  If $r^2\le c_2<r^2+2$, then $\alpha$ is a critical value if and only if 
\begin{eqnarray*}
a\in \{-s \ | -s\in \mathbb{N} \text{ and } \  -r+2\leq s <0\}
\end{eqnarray*}

and 
\begin{eqnarray*}
b=\frac{c_2}{2}-\frac{r^2}{2}+ra+\frac{a^2}{2}+\frac{3}{2}a.
\end{eqnarray*}  
\end{Theorem}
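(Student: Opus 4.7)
The plan is to apply Corollary \ref{equivcritical}, reducing the problem to classifying the coherent systems $(E,V)\in \widetilde{\mathbf{M}}_{\alpha}(2,2r,c_2,2)$ whose maximal coherent subsystem $(L_{max},W_{max})$ satisfies simultaneously $p^{\alpha}_{(L_{max},W_{max})}(m)=p^{\alpha}_{(E,V)}(m)$ and $\dim W_{max}\neq 1$. By Remark \ref{Rem-zero} and Remark \ref{Remnotcritreg}, the case $S(E)=0$ supplies no critical information, so I may restrict to $S(E)=2s\neq 0$ and analyze $s>0$ and $s<0$ separately, mirroring the structure of the proof of Theorem \ref{criticalThm1}.

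The positive branch is disposed of immediately: any vector bundle with $S(E)=2s>0$ is stable, and Theorem \ref{invariantedesegre}(2) then forces $c_2\geq r^2+s^2+s\geq r^2+2$, contradicting the hypothesis $c_2<r^2+2$. Hence no critical value comes from $s>0$, and the entire burden falls on $s<0$.

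For $s<0$, the identity $\mu_a(E,V)-\mu_a(L_{max},W_{max})=s+a(1-\dim W_{max})$ combined with semistability forces $\dim W_{max}=0$ (for $\dim W_{max}\geq 1$ would make this $\leq s<0$). The slope equality then fixes $a=-s$, and Theorem \ref{chernpositiva}(2) gives $s\geq -r+1$, hence $a\leq r-1$. To refine this to $s\geq -r+2$, I would single out $s=-r+1$: Theorem \ref{invariantedesegrenegativo} then presents $E$ in the extension
$$0\longrightarrow \mathcal{O}_{\mathbb{P}^2}(2r-1)\longrightarrow E\longrightarrow \mathcal{O}_{\mathbb{P}^2}(1)\otimes I_Z\longrightarrow 0,$$
with $\ell(Z)=c_2-2r+1$. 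A $2$-dimensional $V$ with $W_{max}=0$ requires $h^0(\mathcal{O}_{\mathbb{P}^2}(1)\otimes I_Z)\geq 2$, hence $\ell(Z)\leq 1$ and therefore $c_2\leq 2r$; coupled with $c_2\geq r^2$ this pins $r\leq 2$, precisely the range where $\{-r+2\leq s<0\}$ is already vacuous.

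For the remaining range $-r+2\leq s\leq -1$ I would produce the required coherent system explicitly. Theorem \ref{invariantedesegrenegativo}(3) provides, for any $Z$ of length $c_2+s^2-r^2$, an extension
$$0\longrightarrow \mathcal{O}_{\mathbb{P}^2}(r-s)\longrightarrow E\longrightarrow \mathcal{O}_{\mathbb{P}^2}(r+s)\otimes I_Z\longrightarrow 0,$$
with $\mathcal{O}_{\mathbb{P}^2}(r-s)$ maximal. Since $r+s\geq 2$, taking $Z$ supported on a line $\mathbb{L}\subset\mathbb{P}^2$ secures $h^0(\mathcal{O}_{\mathbb{P}^2}(r+s)\otimes I_Z)\geq \binom{r+s+1}{2}\geq 3$, so a $2$-dimensional $V$ lifted from this quotient achieves $W_{max}=0$. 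The formula for $b$ then drops out of Remark \ref{Rem-equality}: with $c_1=2r$, $c_1(L_{max})=r+a$, $\dim W_{max}=0$ and $\dim W_{max}-k/2=-1$, the identity $c_1^2-2c_2-2c_1^2(L_{max})=(4b-6a)(\dim W_{max}-k/2)$ rearranges to $b=\tfrac{c_2}{2}-\tfrac{r^2}{2}+ra+\tfrac{a^2}{2}+\tfrac{3a}{2}$. The main obstacle is the careful treatment of the borderline $s=-r+1$: the exclusion hinges on a precise dimension estimate for $h^0(\mathcal{O}_{\mathbb{P}^2}(1)\otimes I_Z)$ matched against the tight numerical range $r^2\leq c_2<r^2+2$, which is what ultimately produces the sharp cutoff $s\geq -r+2$.
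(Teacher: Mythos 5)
Your skeleton matches the paper's: reduce via Corollary \ref{equivcritical} together with Remarks \ref{Rem-zero} and \ref{Remnotcritreg} to systems with $S(E)=2s\neq 0$, eliminate $s>0$, run the $s<0$ analysis of Theorem \ref{criticalThm1}(2), and extract $b$ from the constant-term equality; your existence construction ($Z$ on a line, $V$ lifted from the quotient) is also essentially what the paper does implicitly. One difference of route is only a rigor issue: for $s>0$ you invoke Theorem \ref{invariantedesegre}(2), but that theorem carries the blanket hypothesis $c_2\geq r^2+2-t$, which is precisely what fails in the present range; the paper instead excludes stable bundles with $c_2=r^2$ by Bogomolov and with $c_2=r^2+1$ by the explicit count $\ell(Z)=m^2+1<2m^2+m=h^0(\mathcal{O}_{\mathbb{P}^2}(2m-1))$. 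Your step is repairable (the implication you use does hold, by the same count), but as written it cites the theorem outside its stated validity.

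The substantive gap is the borderline $s=-r+1$. You correctly deduce that $\dim W_{max}=0$ there forces $\ell(Z)\leq 1$, i.e. $c_2\leq 2r$, hence $r\leq 2$; but dismissing this ``because $\{-r+2\leq s<0\}$ is already vacuous'' is a non sequitur for the only-if direction: when the target set is empty you must show that \emph{no} critical value occurs, whereas the case you left open actually produces one. Concretely, for $r=2$, $c_2=4$ take $0\to\mathcal{O}_{\mathbb{P}^2}(3)\to E\to\mathcal{O}_{\mathbb{P}^2}(1)\otimes I_Z\to 0$ with $\ell(Z)=1$ and lift the two sections of $\mathcal{O}_{\mathbb{P}^2}(1)\otimes I_Z$ to $V$ (possible since $H^1(\mathbb{P}^2,\mathcal{O}_{\mathbb{P}^2}(3))=0$); then $(L_{max},W_{max})=(\mathcal{O}_{\mathbb{P}^2}(3),0)$, any line subbundle meeting $V$ has degree $\leq 0$, and $(E,V)$ is strictly $\alpha$-semistable exactly at $a=1$, $b=4$, so Lemma \ref{critical} makes this $\alpha$ a critical value with $a=r-1$, outside your (empty) list. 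So the borderline case is a genuine exception and cannot be absorbed by vacuousness; your argument needs either the extra value $a=r-1$ when $(r,c_2)=(2,4)$ or the standing assumption $r\geq 3$. For what it is worth, the paper's own proof shares this soft spot: it defers to ``the same method as Theorem \ref{criticalThm1}(2)'', whose exclusion of $s=-r+1$ used $c_2\geq r^2+2>2r$, unavailable when $c_2=r^2\leq 2r$ — but your explicit argument does not close it either.
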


\begin{proof} 
As above,  by Remark \ref{Rem-zero} and Remark \ref{Remnotcritreg}, the proof of the theorem is restricted to coherent systems $(E,V) \in \widetilde{\mathbf{M}}_{\alpha}(2,2r,c_2,2)$ where $S(E)=2s \neq 0$. First we notice that, there is not a stable vector bundle $E$ such that $c_1(E)=2r$ and $c_2(E)=r^2, r^2+1$. This is because, if $c_1=2r$ and $c_2=r^2$, then Bogomolov inequality $\Delta=4c_2-c^2_1=0$. On the other hand, if $c_1=2r$ and $c_2=r^2+1$ and
suppose that such vector bundle $E$ exists, then we have that $c_1(E(-r))=0,$ $c_2(E(-r))=1$ and $S(E)=S(E(-r))=2m>0.$  Thus, according to Theorem \ref{invariantedesegre}, 
the vector bundle $E(-r)$ fits into the following exact sequence
\begin{eqnarray*}
0 \to \mathcal{O}_{\mathbb{P}^2}(-m) \to E(-r) \to \mathcal{O}_{\mathbb{P}^2}(m)\otimes I_Z \to 0
\end{eqnarray*}
where $Z$ is not contained in any  curve of degree $2m-1$ and $\ell(Z)=m^2+1.$ Since $Z$ is not contained in a curve of degree $2m-1$, it follows that $\ell(Z)\geq h^0(\mathbb{P}^2,\mathcal{O}_{\mathbb{P}^2}(2m-1))=2m^2+m$ which is impossible because $m\geq 1$.

Therefore, we are going to consider coherent systems
$(E,V)$ with $S(E)=2s$  and $s<0$. Using the same method from Theorem \ref{criticalThm1} (2), we obtain 
\begin{eqnarray*}
a=-s \mbox{ with } s\geq -r+2
\end{eqnarray*}
and 
\begin{eqnarray*}
b=\frac{c_2}{2}-\frac{r^2}{2}+ra+\frac{a^2}{2}+\frac{3}{2}a.
\end{eqnarray*}
\end{proof}

\begin{Theorem}\label{criticalThm3}
   Let $\widetilde{\mathbf{M}}_{\alpha}(2,c_1,c_2,2)$  be the moduli space of
$\alpha$-semistable coherent systems, $0<\alpha(m)=am+b\in \mathbb{Q}[m]$ and $c_1=2r$. If $r^2-s_{0}^{2}\leq c_2\le r^2-(s_0 + 1)^2$ for some integer $s_0$ satisfying $-r+2\leq s_0<0$, then $\alpha$ is a critical value if and only if $a$ is an integer number such that $-s_0\le a\le r-2$ and
       
\begin{eqnarray*}
b&=&\frac{1}{2}c_2-\frac{1}{2}r^2+ra +\frac{a^2}{2}+\frac{3}{2}a.
\end{eqnarray*}
\end{Theorem}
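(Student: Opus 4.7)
My plan is to mirror the structure of the proof of Theorem \ref{criticalThm1}(2), adapted to the new range of $c_2$. The starting point is Corollary \ref{equivcritical}: $\alpha$ is a critical value if and only if there exists $(E,V)\in \widetilde{\mathbf{M}}_{\alpha}(2,2r,c_2,2)$ whose maximal subsystem $(L_{max},W_{max})$ satisfies $p^{\alpha}_{(L_{max},W_{max})}(m)=p^{\alpha}_{(E,V)}(m)$ with $\dim W_{max}\neq 1$. By Remarks \ref{Rem-zero} and \ref{Remnotcritreg}, I may restrict to coherent systems with $S(E)=2s\neq 0$.

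First I would rule out $s>0$. By Theorem \ref{invariantedesegre}(2), a stable $E$ with $c_1=2r$ and $S(E)=2s\geq 2$ requires $c_2\geq r^2+s^2+s\geq r^2+2$, whereas the hypothesis gives $c_2\leq r^2-(s_0+1)^2\leq r^2$. So $s<0$. As in Theorem \ref{criticalThm1}(2), if $\dim W_{max}\geq 1$ then $\mu_a(E,V)-\mu_a(L_{max},W_{max})=s+a(1-\dim W_{max})\leq s<0$, contradicting the equality of Hilbert polynomials. Hence $\dim W_{max}=0$, which yields $a=-s$ from the slope equation, and solving $c_1^2-2c_2-2c_1^2(L_{max})=(4b-6a)(\dim W_{max}-1)=6a-4b$ produces the announced formula for $b$.

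The range of $a$ is pinned down in two steps. By Theorem \ref{invariantedesegrenegativo}(2), a bundle $E$ with $S(E)=2s$, $s<0$, and $c_2(E)=c_2$ exists if and only if $c_2\geq r^2-s^2$; combined with $c_2\leq r^2-(s_0+1)^2$ this forces $s^2\geq(s_0+1)^2$, so $|s|\geq |s_0|$ generically, i.e.\ $s\leq s_0$ and $a=-s\geq -s_0$. The upper bound $a\leq r-2$ comes from requiring $\dim W_{max}=0$: since $L_{max}=\mathcal{O}_{\mathbb{P}^2}(r-s)$ has vanishing $H^1$, a 2-dimensional $V$ disjoint from $H^0(L_{max})$ must inject into $H^0(\mathcal{O}_{\mathbb{P}^2}(r+s)\otimes I_Z)$, forcing $h^0(\mathcal{O}_{\mathbb{P}^2}(r+s)\otimes I_Z)\geq 2$. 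For $s=-r+1$ (hence $a=r-1$) this amounts to $h^0(\mathcal{O}_{\mathbb{P}^2}(1)\otimes I_Z)\geq 2$, which requires $\ell(Z)\leq 1$ and thus $c_2\leq 2r$; but our hypothesis gives $c_2\geq r^2-s_0^2\geq 4r-4>2r$ (as $-r+2\leq s_0<0$ forces $r\geq 3$), ruling this out.

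For the converse, given $a\in\{-s_0,\ldots,r-2\}$ and $b$ as stated, I would place $Z$ on a line $\mathbb{L}\subset \mathbb{P}^2$ with $\ell(Z)=c_2-(r-s)(r+s)$; the Cayley--Bacharach extension of Theorem \ref{invariantedesegrenegativo} produces a vector bundle $E$ with $S(E)=2s=-2a$ and $L_{max}=\mathcal{O}_{\mathbb{P}^2}(r-s)$, while the inclusion $\mathcal{O}_{\mathbb{P}^2}(r+s-1)\hookrightarrow\mathcal{O}_{\mathbb{P}^2}(r+s)\otimes I_Z$ arising from $Z\subset \mathbb{L}$ gives $h^0(\mathcal{O}_{\mathbb{P}^2}(r+s)\otimes I_Z)\geq h^0(\mathcal{O}_{\mathbb{P}^2}(r+s-1))\geq 3$ because $r+s\geq 2$. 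Choosing $V$ of dimension $2$ in this image yields $(E,V)$ with maximal subsystem $(L_{max},\mathbf{0})$, and with the specified $b$ the two reduced Hilbert polynomials coincide, so Corollary \ref{equivcritical} certifies that $\alpha$ is critical. The most delicate part will be confirming that $(L_{max},\mathbf{0})$ is genuinely the maximal coherent subsystem for every choice above, which is guaranteed by Theorem \ref{invariantedesegrenegativo} together with Lemma \ref{pendientesmaximal}.
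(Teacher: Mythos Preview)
Your proposal is correct and follows essentially the same argument as the paper: rule out $s>0$ (you via Theorem~\ref{invariantedesegre}, the paper via the Bogomolov inequality), force $\dim W_{max}=0$ when $s<0$, obtain $a=-s$ and the formula for $b$, then pin down the range of $a$ by combining Theorem~\ref{invariantedesegrenegativo} with the exclusion of $s=-r+1$; your converse construction with $Z$ on a line is also the one used (implicitly) in the paper via Theorem~\ref{nonemptinesspar}(3). One small imprecision: from $s^2\geq(s_0+1)^2$ you only get $|s|\geq|s_0+1|$, not $|s|\geq|s_0|$ as you write ``generically''; the paper's proof actually uses the strict inequality $c_2<r^2-(s_0+1)^2$ (despite the $\leq$ in the statement), and with that the integer step $|s|>|s_0|-1\Rightarrow |s|\geq|s_0|$ goes through cleanly.
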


\begin{proof}
Assume that  $r^2-(s_0+1)^2>c_2\ge r^2-s_0^2,$
where $s_0$ is an integer such that $-r+2\leq s_0<0$.
Since $c_2<r^2$, Bogomolov
inequality implies that, if $(E,V)\in \widetilde{\mathbf{M}}_{\alpha}(2,c_1,c_2,2)$ then $E$ is an unstable vector bundle and hence $S(E)<0$.
Therefore, we need to consider coherent systems $(E,V)$ with $S(E)=2s<0.$
Now, under this hypothesis, we consider two situations: $\dim W_{max}\geq 1$ and $\dim W_{max}=0.$

\begin{enumerate}
\item Suppose that $\dim W_{max}\geq 1,$ then
\begin{eqnarray*}
\mu_a(E,V)-\mu_a(L_{max},W_{max})= s+a(1-\dim\,W_{max})\leq s<0
\end{eqnarray*}
and $(E,V)$ does not satisfy Corollary \ref{equivcritical}, i.e., does not determine a critical point.

\item Now we suppose that $\dim\, W_{max}=0.$ Then we have that 
\begin{eqnarray*}
\mu_a(E,V)-\mu_a(L_{max},W_{max})= s+a=0
\end{eqnarray*}
hence $a=-s$. Next, we compute all the possible values of $s$ based on the theorem's assumptions.

By hypothesis $r^2-(s_0+1)^2>c_2\ge r^2-s_0^2$ and $s<0,$ hence considering $j\in \mathbb{Z}$ we have the following statements
\begin{enumerate}
\item If $j\geq 1$, then $r^2-(s_0+j)^2\geq r^2-(s_0+1)^2>c_2.$
\item If $j\leq 0$, then $c_2\geq r^2-s_0^2\geq r^2-(s_0+j)^2.$
\end{enumerate}
Now by Theorem \ref{invariantedesegrenegativo}(1) there exists a vector bundle $E_j$ which fits into the following exact sequence
\begin{eqnarray*}
    0\to \mathcal{O}_{\mathbb{P}^2}(r-(s_0+j))\to E_j \to \mathcal{O}_{\mathbb{P}^2}(r+s_0+j)\otimes I_Z\to 0
    \end{eqnarray*}
    if and only if $j\leq 0.$ 
Moreover, $\mathcal{O}_{\mathbb{P}^2}(r-(s_0+j))\subset E_j$ is the maximal subbundle and $S(E_j)=-2(s_0+j).$
Therefore $a= -s_0-j \geq -s_0.$

 From Theorem \ref{chernpositiva} we have $s\geq -r+1.$ Therefore, we have that
\begin{eqnarray*}
-s_0 \le a\leq r-1.
\end{eqnarray*}
Now, if $s=-r+1,$ therefore $E$ fits the following exact sequence 
 \begin{eqnarray*}
0\to \mathcal{O}_{\mathbb{P}^2}(2r-1)\to E\to \mathcal{O}_{\mathbb{P}^2}(1)\otimes I_Z\to 0,
 \end{eqnarray*}
 therefore $\dim W_{max}=0$ if and only if $\ell(Z)=0,1$ or equivalent $c_2(E)=2r,2r-1.$
 But if $-r+2\leq s_0<0,$ it follows that  $c_2\geq 4r-4,$ and  $c_2(E)>2r.$ 
 Therefore $a=-s$ with $s\geq -r+2.$

Now, we see the condition for $b$: the condition that 
$ p^{\alpha}(E,V)=p^{\alpha}(L_{max},W_{max})$ is equivalent to
\begin{equation*}
c^{2}_1-2c_2-2c^{2}_1(L_{max})=(4b-6a)\left(\dim\,W_{max}- 1\right)
\end{equation*}
since $a=-s$ and $\dim W_{max}=0,$ the equation (\ref{bforpositives}) 
is equivalent to
\begin{eqnarray*}
b=\frac{1}{2}c_2-\frac{1}{2}r^2+ra +\frac{a^2}{2}+\frac{3}{2}a.
\end{eqnarray*}

\end{enumerate}
\end{proof}

Similarly, for the odd $c_1(E)$ case, we have the following result.

\begin{Theorem}\label{criticalThmodd}
Let $\widetilde{\mathbf{M}}_{\alpha}(2,c_1
,c_2,2)$  be the moduli space of
$\alpha$-semistable coherent systems with $0<\alpha(m)=am+b\in \mathbb{Q}[m]$,
and $c_1=2r-1$, then we have the following statements.
\begin{enumerate}


\item If $c_2\ge r^2+1,$ then $\alpha$ is a critical value if and only if satisfies one of the following:\\
\begin{enumerate}
    \item[(i)] $a\in \left\{s-\frac{1}{2}\ \vert \ s \in\mathbb{N}\ \text{and } \ 0<s<r \right\}$
and
\begin{eqnarray*}
b=\frac{r^2-a^2+2ra+2a-c_2-r}{2}+\frac{1}{8}.
\end{eqnarray*}

\item[(ii)] $ a=\frac{1}{2}-s, \text{ with } 0\geq s\geq -r+2$    
and      
\begin{eqnarray*}
b=\frac{a^2-r^2+2ra+2a+c_2+r}{2}-\frac{1}{8.}\\
\end{eqnarray*}
\end{enumerate}

\item If $r^2-r\leq c_2 <r^2+1,$ then $\alpha$ is a critical value if and only if  

\begin{eqnarray*}
a=\frac{1}{2}-s, \text{ with } 0\geq s\geq -r+3    
\end{eqnarray*}
and      
\begin{eqnarray*}
b=\frac{a^2-r^2+2ra+2a+c_2+r}{2}-\frac{1}{8}.\\
\end{eqnarray*}

       

   \item If $r^2-s_{0}^{2}-r+s_0\leq c_2< r^2-(s_0 + 1)^2+s_0-r+1$ with $r\geq 3$ and for some integer $s_0$ satisfying $-r+3\leq s_0<0$, then $\alpha$ is a critical value if and only if $a=a_{0}+\frac{1}{2}$ for any $a_{0}\in \mathbb{Z}_{+}$ such that $-s_0+\frac{1}{2}\le a< r-\frac{5}{2}$ 
   and
       
  \begin{eqnarray*}
      b=\frac{a^2-r^2+2ra+2a+c_2+r}{2}-\frac{1}{8}.
      \end{eqnarray*}
      \end{enumerate}
      \end{Theorem}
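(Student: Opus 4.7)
The plan is to run the same strategy used for Theorems \ref{criticalThm1}, \ref{criticalThm2} and \ref{criticalThm3} with the single change $t=0\rightsquigarrow t=1$; all geometric inputs (Theorems \ref{invariantedesegre}, \ref{invariantedesegrenegativo}, \ref{chernpositiva}) and the characterisation of critical values in Corollary \ref{equivcritical} remain valid. Thus I start from the principle that $\alpha$ is critical if and only if there is $(E,V)\in\widetilde{\mathbf{M}}_{\alpha}(2,2r-1,c_{2},2)$ whose maximal coherent subsystem $(L_{max},W_{max})$ satisfies $p^{\alpha}_{(L_{max},W_{max})}=p^{\alpha}_{(E,V)}$ together with $\dim W_{max}\in\{0,2\}$. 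Writing $S(E)=2s-1$ and $L_{max}=\mathcal{O}_{\mathbb{P}^{2}}(r-s)$, the equality of reduced Hilbert polynomials separates into a slope condition and a constant-term condition, which I solve algebraically to extract the admissible $a$ and $b$.

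The slope equation $\mu_{a}(E,V)=\mu_{a}(L_{max},W_{max})$ becomes $s-\tfrac{1}{2}+a\bigl(1-\dim W_{max}\bigr)=0$. Since $a>0$, this admits only two solutions: $(\dim W_{max},a)=(2,\,s-\tfrac12)$ forcing $s\geq 1$; or $(\dim W_{max},a)=(0,\,\tfrac12-s)$ forcing $s\leq 0$. Substituting either pair into the constant-term identity
\begin{equation*}
    (2r-1)^{2}-2c_{2}-2(r-s)^{2}=(4b-6a)(\dim W_{max}-1)
\end{equation*}
yields, after clearing denominators, precisely the formulas for $b$ claimed in cases (i) and (ii). So the only nontrivial step is to determine, in each of the three ranges of $c_{2}$, which values of $s$ are actually realised by a coherent system $(E,V)$ with the required $\dim W_{max}$.

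For (1), when $c_{2}\geq r^{2}+1$ both branches occur: Theorem \ref{invariantedesegre}(3) produces a stable $E$ with $S(E)=2s-1$ for each $1\leq s<r$, and taking $V\subseteq H^{0}(L_{max})$ of dimension $2$ realises the $\dim W_{max}=2$ branch; the upper bound $s<r$ is forced because $\dim W_{max}=2$ requires $h^{0}(\mathcal{O}_{\mathbb{P}^{2}}(r-s))\geq 2$. The symmetric argument via Theorem \ref{invariantedesegrenegativo}(3) handles $s\leq 0$, with $s\geq -r+2$ coming from Theorem \ref{chernpositiva}(2) after discarding the boundary $s=-r+1$, where the quotient $\mathcal{O}_{\mathbb{P}^{2}}(-1)\otimes I_{Z}$ has no sections and forces $W_{max}=V$. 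For (2), the range $r^{2}-r\leq c_{2}<r^{2}+1$ rules out all stable $E$ by the same Bogomolov--Segre check used in the proof of Theorem \ref{criticalThm2}, so case (i) is vacuous; within case (ii) one further excludes $s=-r+2$ because then the quotient $\mathcal{O}_{\mathbb{P}^{2}}(1)\otimes I_{Z}$ has $\ell(Z)=c_{2}-6r+6$ too large in this range to accommodate a $2$-dimensional $V$ disjoint from $H^{0}(L_{max})$. For (3), Bogomolov again forces $s\leq 0$, and the inequalities $r^{2}-s_{0}^{2}-r+s_{0}\leq c_{2}<r^{2}-(s_{0}+1)^{2}+s_{0}-r+1$ combined with Theorem \ref{invariantedesegrenegativo}(2) realise $S(E)=2(s_{0}+j)-1$ precisely for $j\leq 0$, so that $a\geq -s_{0}+\tfrac12$; the upper cutoff $a<r-\tfrac52$ arises by excluding the two boundary values of $s$ where the quotient $\mathcal{O}_{\mathbb{P}^{2}}(r+s-1)\otimes I_{Z}$ fails to support a $2$-dimensional $V$ with trivial intersection with $H^{0}(L_{max})$, exactly as the boundary value $s=-r+1$ was discarded in Theorem \ref{criticalThm3}.

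The main obstacle is this last bookkeeping step: verifying, for each admissible pair $(s,c_{2})$ in the three ranges, the existence of a zero-cycle $Z$ of the prescribed length (and, when needed, in sufficiently special position on a line) so that the bundle $E$ produced by the extension (\ref{ext}) admits a $2$-dimensional $V\subset H^{0}(E)$ whose intersection with $H^{0}(L_{max})$ equals the value $\dim W_{max}\in\{0,2\}$ required in the corresponding case. The parity change $t=0\to t=1$ both shifts every half-integer threshold by $\tfrac12$ and, in parts (2) and (3), introduces an extra excluded boundary value of $s$; this is where the statement differs delicately from Theorems \ref{criticalThm1}--\ref{criticalThm3}, and it is the only part of the argument that is not a direct transcription from the even-$c_{1}$ case.
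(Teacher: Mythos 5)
Your overall strategy is exactly the paper's: the published proof of Theorem \ref{criticalThmodd} is only the remark that one repeats Theorems \ref{criticalThm1}--\ref{criticalThm3} with $t=1$, and your reduction via Corollary \ref{equivcritical}, the two branches $(\dim W_{max},a)=(2,\,s-\tfrac12)$ and $(0,\,\tfrac12-s)$, and the resulting formulas for $b$ are correct and are what that remark intends. The difficulty is the boundary bookkeeping, which you yourself identify as the only part that is not a direct transcription; it is precisely there that your argument does not close.

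Concretely: (a) in (1)(i) you assert that Theorem \ref{invariantedesegre} produces a stable $E$ with $S(E)=2s-1$ for every $1\le s<r$, but it does so only when $c_2\ge r^2+s^2-r$ (item (2) of that theorem with $t=1$), the analogue of the constraint written explicitly in Theorem \ref{criticalThm1}(1); for $s=r-1$ and $c_2=r^2+1$ with $r$ large this fails, so the realizability step is unjustified. (b) For $s=-r+2$, $t=1$ the correct length is $\ell(Z_t)=c_2-2r+2$, coming from $c_2=(r-s)(r+s-t)+\ell(Z_t)$ (the formula in Theorem \ref{invariantedesegrenegativo}(3) has a sign slip in the $t$-term, which you copied), not $c_2-6r+6$; with your value the exclusion of $s=-r+2$ in part (2) only works for $r\ge 7$, whereas with the correct value it works for all $r\ge 3$. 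Moreover the same corrected count gives $\ell(Z_t)\ge 2$, hence $h^0(\mathbb{P}^2,\mathcal{O}_{\mathbb{P}^2}(1)\otimes I_{Z_t})\le 1$, already when $c_2\ge r^2+1$, so by your own mechanism $s=-r+2$ must also be discarded in case (1)(ii); this contradicts your claim that branch (ii) is realized down to $s=-r+2$, so either that claim or the stated range needs repair. (c) In part (3), discarding the boundary values $s=-r+1$ and $s=-r+2$ only yields $a\le r-\tfrac52$, not the strict bound $a<r-\tfrac52$ you are supposed to obtain, and you give no reason to exclude $s=-r+3$ (none is apparent: taking $Z_t$ collinear gives the quotient $\mathcal{O}_{\mathbb{P}^2}(2)\otimes I_{Z_t}$ at least three sections). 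So the half-integer shifts and extra excluded boundary values are exactly where a complete proof must supply details, and as written your treatment of (1)(i), (1)(ii)/(2) and (3) does not establish them.
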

      
\begin{proof}
The proofs of $(1)$, $(2)$, and $(3)$ are similar to those of Theorems \ref{criticalThm1}, \ref{criticalThm2}, and \ref{criticalThm3}, respectively.

\end{proof}

\section{Flips}\label{sect-Flips}

In order to study the differences between consecutive moduli spaces, 
we follow \cite[Definition 6.4]{Bradlow}. 
Hence, if $\alpha_{i,t},\alpha_{i+1,t}$ are two consecutive critical values,
then for any $\alpha, \alpha'$ in $(\alpha_{i,t},\alpha_{i+1,t})$ it follows that $\mathbf{M}_{\alpha}(2,c_1,c_2, 2)= \mathbf{M}_{\alpha'}(2,c_1,c_2, 2)$. In this case, we denote the moduli by
$\mathbf{M}_{i}(2,c_1,c_2, 2)$. 

Clearly, the critical values depend on the type of coherent system. In the previous section, we determine the different critical values when $c_1$ is odd or even, and for this, we introduced the parameter $t\in \{0,1\}$. Thus, for coherent systems of type $(2,2r-t,c_2,2)$  we denote the critical values as $\alpha_{i,t}.$
Now, if $\mathbf{M}_{i}(2,2r-t,c_2, 2)$ denotes the moduli space in the interval $(\alpha_{i,t},\alpha_{i+1,t})$ we define the following sets:

\[\Sigma^{+}_{i,t}:=  \{ (E,V)\in  \mathbf{M}_{i}(2,2r-t,c_2,2)  ~\vert~ \text{$(E,V)$ is not $\alpha$-stable for $\alpha<\alpha_i$}\} 
\]

and 
\[
\Sigma^{-}_{i-1,t} = \{ (E,V)\in  \mathbf{M}_{i-1}(2,2r-t,c_2,2)  ~\vert~ \text{$(E,V)$ is not $\alpha$-stable for $\alpha>\alpha_{i}$} \}. 
\]

The purpose of this section is to describe the topological
and geometric properties of $\Sigma^{+}_{i,t}$ and
$\Sigma^{-}_{i-1,t}$. For this, we have defined them using the
Segre invariant depending on the critical value.  
Hence, if $\alpha_{i,t}=a_im+b_i$ is a critical value that
satisfies some of the conditions of Theorem \ref{criticalThm1}(1) (see also Theorem \ref{criticalThmodd} (1(i)). 
Then we have the following;

\[
\Sigma^{-}_{i-1,t}:=\{(E,V)\in \mathbf{M}_{i-1,}(2,2r-t,c_2,2) ~\vert~ (L_{max},W_{max})=(\mathcal{O}_{\mathbb{P}^2}(r-a_i), W) \text{ and } \dim W=2\}
\]
and  $\Sigma^{+}_{i,t}=\emptyset$. \\

Notice that if   $(E,V)\in \Sigma^{-}_{i-1,t}$  then $(L_{max},W_{max})=(\mathcal{O}_{\mathbb{P}^2}(r-a_i), V).$
Moreover, $(E,V)\in \Sigma^{-}_{i-1,t}$ if and only if there exists the following
exact sequence 

\begin{eqnarray}\label{sucnegativot}
0\to (\mathcal{O}_{\mathbb{P}^2}(r-s),V)\to (E,V)\to (\mathcal{O}_{\mathbb{P}^2}(r+s-t)\otimes I_{Z_t},0)\to 0,
\end{eqnarray}

 \noindent where $Z_t\subset \mathbb{P}^2$ is of codimension 2,  $\ell(Z_t)= c_2+s^2-r^2+t(r-s)$ and it is not contained
in a curve of degree $2s-1-t.$   
Notice that from the definition, if $(E,V)\in  \Sigma^{-}_{i-1,t}$ then $S(E)=2a_i-t.$\\






In the following, we study the geometric properties of $\Sigma^{-}_{i-1,t}$ through extensions of type (\ref{sucnegativot}). The following result describes some properties of these extensions.\\

    \begin{Lemma}\label{cohomologiaext} Let $s,r,c_2$ be  natural numbers, $t\in \{0,1\}$ and $Z_t\subset \mathbb{P}^2$ of codimension $2$ not contained in a curve of degree $2s-1-t$ with $\ell(Z_t)=c_2+s^2-r^2+t(r-s)$. Then
\begin{enumerate}
\item $Hom((\mathcal{O}_{\mathbb{P}^2}(r+s-t)\otimes I_{Z_t},0),(\mathcal{O}_{\mathbb{P}^2}(r-s),V))=0$.
\item $\text{Ext}^2((\mathcal{O}_{\mathbb{P}^2}(r+s-t)\otimes I_{Z_t},0),(\mathcal{O}_{\mathbb{P}^2}(r-s),V))=0.$ 
\item $\dim \text{Ext}^1((\mathcal{O}_{\mathbb{P}^2}(r+s-t)\otimes I_{Z_t},0),(\mathcal{O}_{\mathbb{P}^2}(r-s)),V))=c_2-r^2-s^2+3s-1 +\frac{t}{2}(2r+2s-3-t)$.
\end{enumerate}
\end{Lemma}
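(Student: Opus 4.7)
The plan is to reduce the three computations in the category of coherent systems to purely sheaf-theoretic computations, exploiting the crucial fact that the first factor has vanishing linear subspace $V_1 = 0$. A morphism of coherent systems $(E_1,0) \to (E_2, V_2)$ is by definition any sheaf morphism $\phi\colon E_1 \to E_2$ subject to $H^0(\phi)(V_1)\subset V_2$, a condition that is vacuous when $V_1=0$; hence $\text{Hom}_{CS}=\text{Hom}(E_1,E_2)$. For Ext$^1$, I would argue directly that an extension $0\to(E_2,V_2)\to(E,V)\to(E_1,0)\to 0$ forces $V=V_2$ via the exact sequence $0\to V_2\to V\to 0$, with the embedding $V_2\hookrightarrow H^0(E)$ determined by the injection $H^0(E_2)\hookrightarrow H^0(E)$ that comes from the sheaf extension (this injection is automatic since $E_2\hookrightarrow E$). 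Yoneda's description of higher Ext plus the analogous long exact sequence for coherent-system Ext groups in \cite{Min} handles Ext$^2$. The outcome is the identification $\text{Ext}^i_{CS}((E_1,0),(E_2,V_2))\cong \text{Ext}^i_{\mathcal{O}_{\mathbb{P}^2}}(E_1,E_2)$ for $i=0,1,2$, independent of $V$.

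Having reduced to sheaf Ext's, I would apply the functor $\text{Hom}(-,\mathcal{O}_{\mathbb{P}^2}(r-s))$ to the ideal sheaf sequence
\begin{equation*}
0\to \mathcal{O}_{\mathbb{P}^2}(r+s-t)\otimes I_{Z_t}\to \mathcal{O}_{\mathbb{P}^2}(r+s-t)\to \mathcal{O}_{Z_t}\to 0,
\end{equation*}
and use the standard facts on $\mathbb{P}^2$: $\text{Ext}^i(\mathcal{O}(a),\mathcal{O}(b))=H^i(\mathcal{O}(b-a))$ with $H^1$ always zero, while $\text{Ext}^i(\mathcal{O}_{Z_t},\mathcal{O}(b))=0$ for $i\leq 1$ (since $Z_t$ is a codimension-$2$ local complete intersection, via the local-to-global Ext spectral sequence and the vanishing of $\mathcal{E}xt^{<2}$) and $=k^{\ell(Z_t)}$ for $i=2$. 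Assembling the long exact sequence identifies $\text{Hom}(E_1,E_2)$ with $H^0(\mathcal{O}_{\mathbb{P}^2}(-2s+t))$, which vanishes because $s\geq 1$ and $t\in\{0,1\}$ force $-2s+t<0$; this proves (1).

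For (2), I would instead use Serre duality on $\mathbb{P}^2$ (with $\omega=\mathcal{O}(-3)$):
\begin{equation*}
\text{Ext}^2\bigl(\mathcal{O}(r+s-t)\otimes I_{Z_t},\mathcal{O}(r-s)\bigr)\cong \text{Hom}\bigl(\mathcal{O}(r-s),\mathcal{O}(r+s-t-3)\otimes I_{Z_t}\bigr)^\vee = H^0(I_{Z_t}(2s-t-3))^\vee.
\end{equation*}
The hypothesis that $Z_t$ is not contained in a curve of degree $2s-1-t$ means $h^0(I_{Z_t}(2s-1-t))=0$, and since multiplication by a linear form yields injections $H^0(I_{Z_t}(d))\hookrightarrow H^0(I_{Z_t}(d+1))$, the function $d\mapsto h^0(I_{Z_t}(d))$ is nondecreasing; hence $h^0(I_{Z_t}(2s-t-3))=0$ as well.

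Finally, with $\text{Hom}=\text{Ext}^2=0$ the dimension of $\text{Ext}^1$ is just $-\chi(E_1,E_2)$. Additivity of $\chi(-,E_2)$ on the ideal sheaf sequence combined with $\chi(\mathcal{O}(k))=\binom{k+2}{2}$ yields
\begin{equation*}
\chi(E_1,E_2)=\chi(\mathcal{O}(-2s+t))-\ell(Z_t)=\tfrac{1}{2}(2s-t-2)(2s-t-1)-\bigl(c_2+s^2-r^2+t(r-s)\bigr),
\end{equation*}
and a direct expansion recovers the stated formula $c_2-r^2-s^2+3s-1+\tfrac{t}{2}(2r+2s-3-t)$. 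The one nontrivial step is the very first one — justifying the reduction of coherent-system Ext to sheaf Ext when $V_1=0$ — which is where I would either invoke the long exact sequence of \cite{Min} directly or give the Yoneda-style argument sketched above; everything after that is a routine projective-plane bookkeeping.
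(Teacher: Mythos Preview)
Your proof is correct and follows essentially the same route as the paper: both reduce the coherent-system $\mathrm{Ext}^i$ to sheaf $\mathrm{Ext}^i$ via the observation that $V_1=0$ kills the correction terms in the long exact sequence of \cite{Min}, and both use Serre duality together with the hypothesis on $Z_t$ to handle $\mathrm{Ext}^2$. The only cosmetic difference is in part~(3): the paper computes $\dim\mathrm{Ext}^1$ by applying Serre duality again to identify it with $h^1(\mathcal{O}_{\mathbb{P}^2}(2s-3-t)\otimes I_{Z_t})$ and then reading this off from the ideal-sheaf sequence, whereas you obtain it as $-\chi$ once $\mathrm{Hom}$ and $\mathrm{Ext}^2$ vanish; the two bookkeeping choices yield the same formula.
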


\begin{proof}
Let $\Gamma=(\mathcal{O}_{\mathbb{P}^2}(r+s-t)\otimes I_{Z_t},0)$ be a coherent system, where $Z_t\subset \mathbb{P}^2$ is of codimension 2 and is not contained in a curve of degree $2s-1-t$. Let $\Gamma^{'}=(\mathcal{O}_{\mathbb{P}^2}(r-s),V)$ be a coherent system with $\dim V=2.$  By \cite[Corollary $2.6$]{Min} we obtain the following exact sequence

\begin{equation}\label{exactMin}
\begin{split} 
0 &\to \text{Hom}(\Gamma,\Gamma^{'}) 
\to \text{Hom}(\mathcal{O}_{\mathbb{P}^2}(r+s-t)\otimes I_{Z_t},\mathcal{O}_{\mathbb{P}^2}(r-s)) 
\to \text{Hom}(0,H^0(\mathbb{P}^2,\mathcal{O}_{\mathbb{P}^2}(r-s)/V)) \\
&\to \text{Ext}^1(\Gamma,\Gamma^{'}) \to \text{Ext}^1(\mathcal{O}_{\mathbb{P}^2}(r+s-t)\otimes I_{Z_t},\mathcal{O}_{\mathbb{P}^2}(r-s)) \to \text{Hom}(0,H^1(\mathbb{P}^2,\mathcal{O}_{\mathbb{P}^2}(r-s)) \to \\
 &\to  \text{Ext}^2(\Gamma,\Gamma^{'}) \to \text{Ext}^2(\mathcal{O}_{\mathbb{P}^2}(r+s-t)\otimes I_{Z_t},\mathcal{O}_{\mathbb{P}^2}(r-s)) \to  \cdots
\end{split}
\end{equation}

It follows that;

\begin{enumerate}
    \item Since $\text{Hom}(\mathcal{O}_{\mathbb{P}^2}(r+s-t)\otimes I_{Z_t},\mathcal{O}_{\mathbb{P}^2}(r-s))=0$, we have
    \begin{eqnarray*}
 \text{Hom}(\Gamma,\Gamma^{'})= \text{Hom}((\mathcal{O}_{\mathbb{P}^2}(r+s-t)\otimes I_{Z_t},0),(\mathcal{O}_{\mathbb{P}^2}(r-s),V))=0.
    \end{eqnarray*}
This proves the first statement.

    \item Now observe that,  \begin{eqnarray*}
\text{Ext}^{2}(\mathcal{O}_{\mathbb{P}^2}(r+s-t)\otimes I_{Z_t}, \mathcal{O}_{\mathbb{P}^2}(r-s))
&\cong& \text{Ext}^{0}(\mathcal{O}_{\mathbb{P}^2}(r-s), \mathcal{O}_{\mathbb{P}^2}(r+s-t)\otimes I_{Z_t}\otimes \omega_{\mathbb{P}^2})\\
&\cong& \text{Ext}^{0}(\mathcal{O}_{\mathbb{P}^2},\mathcal{O}_{\mathbb{P}^2}(s-r)\otimes \mathcal{O}_{\mathbb{P}^2}(r+s-t)\otimes I_{Z_t}\otimes \omega_{\mathbb{P}^2})\\
&\cong& \text{Ext}^{0}(\mathcal{O}_{\mathbb{P}^2},\mathcal{O}_{\mathbb{P}^2}(2s-3-t)\otimes I_{Z_t})\\
&\cong& H^{0}(\mathbb{P}^2,\mathcal{O}_{\mathbb{P}^2}(2s-3-t)\otimes I_{Z_t}).
\end{eqnarray*}
Since $Z_t$ is not contained in a curve of degree $2s-1-t$, 
it follows that $h^0(\mathbb{P}^2,\mathcal{O}_{\mathbb{P}^2}(2s-3-t)\otimes I_{Z_t})=0.$ 
Hence $\text{Ext}^2(\Gamma,\Gamma^{'})=0$ which proves the second statement.

\item From the exact sequence (\ref{exactMin}), we have $\text{Ext}^1(\Gamma,\Gamma^{'}) \cong \text{Ext}^1(\mathcal{O}_{\mathbb{P}^2}(r+s-t)\otimes I_{Z_t},\mathcal{O}_{\mathbb{P}^2}(r-s))$. Moreover,
\begin{eqnarray*}
\text{Ext}^1(\mathcal{O}_{\mathbb{P}^2}(r+s-t)\otimes I_{Z_t},\mathcal{O}_{\mathbb{P}^2}(r-s)) &\cong &\text{Ext}^1(\mathcal{O}_{\mathbb{P}^2}(r-s),\mathcal{O}_{\mathbb{P}^2}(r+s-t-3)\otimes I_{Z_t})\\
& \cong & \text{Ext}^1(\mathcal{O}_{\mathbb{P}^2},\mathcal{O}_{\mathbb{P}^2}(2s-3-t)\otimes I_{Z_t})\\
&\cong & H^1(\mathcal{O}_{\mathbb{P}^2}(2s-3-t)\otimes I_{Z_t}).
\end{eqnarray*}
 Now, from the exact sequence, 
\begin{eqnarray*}
0\to \mathcal{O}_{\mathbb{P}^2}(2s-3-t)\otimes I_{Z_t} \to \mathcal{O}_{\mathbb{P}^2}(2s-3-t) \to \mathcal{O}_{Z_t}\to 0
\end{eqnarray*}
and since that  $Z_t$ is not contained in a curve of degree $2s-1-t$, it follows that $h^0(\mathbb{P}^2,\mathcal{O}_{\mathbb{P}^2}(2s-3-t)\otimes I_{Z_t})=0$ and
\begin{eqnarray*}
h^1(\mathcal{O}_{\mathbb{P}^2}(2s-3)\otimes I_{Z_t})=\ell(Z_t)-h^0(\mathbb{P}^2,\mathcal{O}_{\mathbb{P}^2}(2s-3-t)).
\end{eqnarray*}
Therefore, 
\begin{eqnarray*}
\dim \text{Ext}^1(\mathcal{O}_{\mathbb{P}^2}(r+s-t)\otimes I_{Z_t},\mathcal{O}_{\mathbb{P}^2}(r-s))&=&\ell(Z_t)-s(2s-3)-1+\frac{t}{2}(4s-3-t)\\
&=&c_2-r^2-s^2+3s-1 +\frac{t}{2}(2r+2s-3-t).
\end{eqnarray*}

\end{enumerate}

\end{proof}

Now we will consider the universal extension that parameterizes
extensions of type $(\ref{sucnegativot}).$ 
Following \cite{Lange} (see also \cite{BGMMN}) we construct such an extension as follows.

Notice that coherent systems $(\mathcal{O}_{\mathbb{P}^2}(r-s),V)$ with $\dim V=2$ 
are parameterized by the Grassmannian variety $G: =Gr(2, H^{0}(\mathbb{P}^2,\mathcal{O}_{\mathbb{P}^2}(r-s)).$ 
It follows that we have a family of coherent systems 
$(\mathcal{E}_1, \mathcal{V}_1):=(p_{1}^{*}\mathcal{O}_{\mathbb{P}^2}(r-s),p_{2}^{*}S)$ 
over $\mathbb{P}^{2}\times G$ where $S$ denotes the tautological bundle over $G$ of rank two defined by
the tautological exact sequence
$$
0\rightarrow S \rightarrow H^{0}(\mathbb{P}^2,\mathcal{O}_{\mathbb{P}^2}(r-s))\otimes \mathcal{O}_G\rightarrow Q\rightarrow 0.
$$

On the other hand, for the parameter $t\in \{0,1\},$  the coherent systems $(\mathcal{O}_{\mathbb{P}^2}(r+s-t)\otimes I_{Z_t},0)$ such that $Z_t\subset \mathbb{P}^2$ is not contained in a curve of degree $2s-1-t$ are parameterized by an open set $H_t \subset \text{Hilb}^{\ell(Z_t)}(\mathbb{P}^{2})$. That is,
\begin{eqnarray*}
H_t:=\{Z_t\in \text{Hilb}^{\ell(Z_t)}(\mathbb{P}^2) \mid h^0(\mathbb{P}^2,\mathcal{O}_{\mathbb{P}^2}(r+s-t)\otimes I_{Z_t})=0  \}.
\end{eqnarray*}

This implies that we have a family of coherent systems over
$\mathbb{P}^2\times H_t$ given by 
$(\mathcal{E}_2, \mathcal{V}_2):=(p_{1}^{*}\mathcal{O}_{\mathbb{P}^2}(r+s-t)\otimes \mathcal{I}_{H_t},0)$, where 
$\mathcal{I}_{H_t}$ is the ideal sheaf of the universal scheme $\mathcal{U}|_{H_t}$ on $\mathbb{P}^2\times H_t.$\\








\begin{Theorem}\label{ThmFlip}
There exists a vector bundle $W^{-}_t$ over $G\times H_t$  
such that $\Sigma^{-}_{i,t}=\mathbb{P}(W^{-}_t)$. 
In particular, $\Sigma^{-}_{i,t}$  is irreducible of dimension 
\begin{eqnarray*}
3c_2-3r^2+s^2+3s+2h^0(\mathbb{P}^2,\mathcal{O}_{\mathbb{P}^2}(r-s))-6+\frac{t}{2}(6r-2s-3-t).
\end{eqnarray*}
\end{Theorem}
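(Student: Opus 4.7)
The plan is to construct $\Sigma^{-}_{i,t}$ as a projectivization of a relative Ext sheaf over $G \times H_t$. By Lemma~\ref{cohomologiaext}, for every point $(\Lambda, Z_t) \in G \times H_t$ corresponding to coherent systems $\Gamma' = (\mathcal{O}_{\mathbb{P}^2}(r-s), V)$ and $\Gamma = (\mathcal{O}_{\mathbb{P}^2}(r+s-t) \otimes I_{Z_t}, 0)$, one has $\operatorname{Hom}(\Gamma, \Gamma') = 0$, $\operatorname{Ext}^2(\Gamma, \Gamma') = 0$, and $\dim \operatorname{Ext}^1(\Gamma, \Gamma')$ is a constant independent of the point. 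Since the families $(\mathcal{E}_1, \mathcal{V}_1)$ on $\mathbb{P}^2 \times G$ and $(\mathcal{E}_2, \mathcal{V}_2)$ on $\mathbb{P}^2 \times H_t$ are already constructed above, the first step is to pull them back to $\mathbb{P}^2 \times G \times H_t$ and form the relative Ext sheaf
\[
W^{-}_t := \mathcal{E}xt^{1}_{\pi}\!\left(\pi_{23}^{*}(\mathcal{E}_2, \mathcal{V}_2),\, \pi_{13}^{*}(\mathcal{E}_1, \mathcal{V}_1)\right),
\]
where $\pi : \mathbb{P}^2 \times G \times H_t \to G \times H_t$ is the projection. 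The uniform vanishing of $\operatorname{Ext}^0$ and $\operatorname{Ext}^2$ together with cohomology-and-base-change guarantees that $W^{-}_t$ is locally free of the rank computed in Lemma~\ref{cohomologiaext}(3).

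Next, I would invoke the universal extension construction of \cite{Lange} (as used in \cite{BGMMN}) to produce, on $\mathbb{P}^2 \times \mathbb{P}(W^{-}_t)$, a universal coherent system $(\mathcal{E}, \mathcal{V})$ fitting into a short exact sequence
\[
0 \to \pi^{*}(\mathcal{E}_1, \mathcal{V}_1) \otimes \mathcal{O}_{\mathbb{P}(W^{-}_t)}(1) \to (\mathcal{E}, \mathcal{V}) \to \pi^{*}(\mathcal{E}_2, \mathcal{V}_2) \to 0
\]
whose fiber over each point of $\mathbb{P}(W^{-}_t)$ is precisely a nonsplit extension of type \eqref{sucnegativot}. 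This gives a classifying morphism $\Phi : \mathbb{P}(W^{-}_t) \to \Sigma^{-}_{i,t}$.

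The core step is proving that $\Phi$ is bijective. Surjectivity is clear from the definition of $\Sigma^{-}_{i,t}$: every $(E,V) \in \Sigma^{-}_{i,t}$ has maximal coherent subsystem $(\mathcal{O}_{\mathbb{P}^2}(r-a_i), V)$ and hence sits in an extension of the required form. For injectivity, the vanishing $\operatorname{Hom}(\Gamma, \Gamma') = 0$ of Lemma~\ref{cohomologiaext}(1) implies that any isomorphism between two such extensions is induced by automorphisms of $\Gamma$ and $\Gamma'$, which act trivially on the extension class up to scalar; moreover, the sub $\Gamma' = (\mathcal{O}_{\mathbb{P}^2}(r-s), V)$ and quotient $\Gamma$ are intrinsically recovered from $(E,V)$ since they correspond to the maximal coherent subsystem and its cokernel. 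Thus $\Phi$ identifies $\Sigma^{-}_{i,t}$ with $\mathbb{P}(W^{-}_t)$.

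Finally, the dimension is additive:
\[
\dim \Sigma^{-}_{i,t} = \dim G + \dim H_t + \operatorname{rk}(W^{-}_t) - 1,
\]
with $\dim G = 2\!\left(h^0(\mathbb{P}^2, \mathcal{O}_{\mathbb{P}^2}(r-s)) - 2\right)$, $\dim H_t = 2\ell(Z_t) = 2(c_2 + s^2 - r^2 + t(r-s))$, and $\operatorname{rk}(W^{-}_t) = c_2 - r^2 - s^2 + 3s - 1 + \tfrac{t}{2}(2r + 2s - 3 - t)$ from Lemma~\ref{cohomologiaext}(3). Collecting the $t$-terms as $2t(r-s) + \tfrac{t}{2}(2r+2s-3-t) = \tfrac{t}{2}(6r - 2s - 3 - t)$ yields exactly the stated formula. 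Irreducibility follows since $G$, $H_t$ and the projective bundle are all irreducible. The main technical obstacle is verifying the local freeness of $W^{-}_t$ and the universality of the extension; both are handled uniformly by the uniform $\operatorname{Ext}^0$- and $\operatorname{Ext}^2$-vanishing supplied by Lemma~\ref{cohomologiaext}.
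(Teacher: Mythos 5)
Your proposal is correct and follows essentially the same route as the paper: it builds the relative $\mathcal{E}xt^1$ bundle $W^-_t$ over $G\times H_t$ from the uniform vanishing and constancy statements of Lemma \ref{cohomologiaext}, uses the Lange/BGMMN universal extension to get the classifying morphism onto $\Sigma^-_{i,t}$, and computes the dimension as $\dim G+\dim H_t+\operatorname{rk}(W^-_t)-1$, which simplifies to the stated formula exactly as in the paper.
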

\begin{proof}
 The proof is similar to \cite[A.10]{BGMMN}. By Lemma \ref{cohomologiaext}, we get
\begin{eqnarray*}
\text{dim } \text{Ext}^1 ((\mathcal{O}_{\mathbb{P}^2}(r+s-t)\otimes I_{Z_t},0),(\mathcal{O}_{\mathbb{P}^2}(r-s), V))
\end{eqnarray*}
is independent of the choice of $(\mathcal{O}_{\mathbb{P}^2}(r+s-t)\otimes I_{Z_t},0)\in H_t$, $(\mathcal{O}_{\mathbb{P}^2}(r-s), V)\in G$. It follows from \cite[Corollaire 1.20]{Min} that there is a vector bundle $W^-_t$ on $H_t\times G$ whose fiber over
$ ((\mathcal{O}_{\mathbb{P}^2}(r+s-t)\otimes I_{Z_t},0),(\mathcal{O}_{\mathbb{P}^2}(r-s), V))\in  H_t \times  G$ is
\begin{eqnarray*}
 \text{Ext}^1((\mathcal{O}_{\mathbb{P}^2}(r+s-t)\otimes I_Z,0),(\mathcal{O}_{\mathbb{P}^2}(r-s), V)),
\end{eqnarray*}
where
 \begin{eqnarray*}
W^{-}_t = \mathcal{E}xt^1_{\pi}((p_2\times Id)^{*}(\mathcal{O}_{\mathbb{P}^2}(r+s-t)\otimes I_{Z_t},0), (p_1 \times Id)^{*}(\mathcal{O}_{\mathbb{P}^2}(r-s), V)),
\end{eqnarray*}
with $\pi : H_t \times G \times \mathbb{P}^2 \to H_t \times G$, $p_1 : H_t\times G \to  H_t$ and $p_2 : H_t\times G \to G$ are the natural projections.

Hence $\mathbb{P}(W^{-}_t)$  classifies the non-trivial 
extensions (\ref{sucnegativot}) up to scalar multiples. 
Therefore, we can define 
$f: \mathbb{P}(W^{-}_t)\rightarrow \mathbf{M}_{i}(2,2r-t,c_2,2)$ 
set theoretically as the natural map that sends (\ref{sucnegativot}) to
$ (E,V) \in  \mathbf{M}_{i}(2,2r-t,c_2,2)$, 
and the image of $f$ is exactly $\Sigma^{-}_{i,t}$.

To prove that $f$ is a morphism, we will construct
an universal extension over $\mathbb{P}(W^-_t) \times \mathbb{P}^2$.
Let $\sigma:\mathbb{P}(W^-_t)\times \mathbb{P}^2 \to  \mathbb{P}(W^-_t)$
and $p:\mathbb{P}(W^-_t)\to H_t \times G$ the natural projections.
We write for $ m = 1, 2,$ 
\begin{eqnarray*}
(\mathcal{E}_m, \mathcal{V}_m)^-_t = (p \times Id)^{*}(p_m \times Id)^{*}(\mathcal{E}_m, \mathcal{V}_m).
\end{eqnarray*}
We construct the universal extension as 
\begin{eqnarray}\label{extensionuniversalt}
0 \to (\mathcal{E}_1, \mathcal{V}_1)^-_t \otimes \sigma^{*} \mathcal{O}_{\mathbb{P}W^-_t}(1)\to  (\mathcal{E}, \mathcal{V})^-_t\to  (\mathcal{E}_2, \mathcal{V}_2)^-_t\to 0
\end{eqnarray}
on $\mathbb{P}(W^-_t) \times \mathbb{P}^2$. Extensions of the form (\ref{extensionuniversalt}) are classified by
\begin{eqnarray*}
\text{Ext}^1((\mathcal{E}_2, \mathcal{V}_2)^-_t),(\mathcal{E}_1, \mathcal{V}_1)^-_t \otimes  \sigma^{*}\mathcal{O}_{\mathbb{P}W^-_t}(1)).
\end{eqnarray*}
We have a spectral sequence whose $E_2$-term is given by
\begin{eqnarray*}
E^{pq}_2=H^p(\mathcal{E}xt^q_{\sigma}((\mathcal{E}_2, \mathcal{V}_2)^-_t),(\mathcal{E}_1, \mathcal{V}_1)^-_t\otimes  \sigma^{*}\mathcal{O}_{\mathbb{P}W^-_t}(1))).
\end{eqnarray*}
We obtain the following exact spectral sequence.
\begin{align*}
 H^1(\mathcal{H}om_{\sigma}((\mathcal{E}_2,\mathcal{V}_2)^-_t),(\mathcal{E}_1, \mathcal{V}_1)^-_t \otimes  \sigma^{*}\mathcal{O}_{\mathbb{P}W^-_t}(1))) \to \text{Ext}^1(((\mathcal{E}_2, \mathcal{V}_2)^-),(\mathcal{E}_1, \mathcal{V}_1)^- \otimes  \sigma^{*}\mathcal{O}_{\mathbb{P}W^-_t}(1)))\to \\
 H^0(\mathcal{E}xt^1_{\sigma}((\mathcal{E}_2,\mathcal{V}_2)^-_t),(\mathcal{E}_1, \mathcal{V}_1)^- \otimes  \sigma^{*}\mathcal{O}_{\mathbb{P}W^-_t}(1)))\to H^2(\mathcal{H}om_{\sigma}((\mathcal{E}_2, \mathcal{V}_2)^-_t),(\mathcal{E}_1, \mathcal{V}_1)^-_t \otimes  \sigma^{*}\mathcal{O}_{\mathbb{P}W^-_t}(1))).
\end{align*}
By Lemma \ref{cohomologiaext}, it follows that $\mathcal{H}om_{\sigma}((\mathcal{E}_2,\mathcal{V}_2)^-_t),(\mathcal{E}_1, \mathcal{V}_1)^-_t \otimes  \sigma^{*}\mathcal{O}_{\mathbb{P}W^-_t}(1))=0$, which implies the isomorphism
\begin{eqnarray*}
\text{Ext}^1(((\mathcal{E}_2, \mathcal{V}_2)^-),(\mathcal{E}_1, \mathcal{V}_1)^-_t \otimes  \sigma^{*}\mathcal{O}_{\mathbb{P}W^-_t}(1)))\cong H^0(\mathcal{E}xt^1_{\sigma}((\mathcal{E}_2,\mathcal{V}_2)^-_t),(\mathcal{E}_1, \mathcal{V}_1)^-_t\otimes  \sigma^{*}\mathcal{O}_{\mathbb{P}W^-_t}(1))).
\end{eqnarray*}
On the other hand, by base-change \cite[Theoreme 1.16]{Min},
\begin{align*}
\mathcal{E}xt^1_{\sigma}((\mathcal{E}_2, \mathcal{V}_2)^-_t),&(\mathcal{E}_1, \mathcal{V}_1)^-_t\otimes  \sigma^{*}\mathcal{O}_{\mathbb{P}W^-_t}(1))= \mathcal{E}xt^1_{\sigma}((\mathcal{E}_2, \mathcal{V}_2)^-_t),(\mathcal{E}_1, \mathcal{V}_1)^-_t)\otimes \mathcal{O}_{\mathbb{P}W^-_t}(1))\\
&= p^{*}\mathcal{E}xt^1_{\pi}((p_2 \times Id)^{*}((\mathcal{E}_2, \mathcal{V}_2)^-_t),(p_1 \times  Id)^{*}(\mathcal{E}_1, \mathcal{V}_1)^-_t) \otimes \mathcal{O}_{\mathbb{P}W^-_t}(1)\\
&= p^{*}W^-_t \otimes \mathcal{O}_{\mathbb{P}W^-_t}(1).
\end{align*}

Now, $H^0(p^{*}W^-_t \otimes \mathcal{O}_{\mathbb{P}W^-_t}(1))=\text{End}(W^{-}_t)$.  The universal extension (\ref{extensionuniversalt}) corresponds to the
identity endomorphism of $W^-_t$. And the restriction
of (\ref{extensionuniversalt}) to $  \{y\} \times \mathbb{P}^2$ is precisely the extension
\begin{eqnarray*}
0\to (\mathcal{E}_{1},\mathcal{V}_1)^{-}_t
\otimes \sigma^{*}\mathcal{O}_{\mathbb{P}(F)}(1)\to (\mathcal{E},\mathcal{V})^{-}_t\to (\mathcal{E}_2,\mathcal{V}_2))^{-}_t\to 0
\end{eqnarray*}

\noindent that corresponds to $y \in \mathbb{P}(W^{-}_t)$. Therefore, the morphism $\mathbb{P}(W^-_t)\to  \mathbf{M}_{i}(2,2r-t,c_2,2)$  
given by (\ref{extensionuniversalt}) and the universal property of $ \mathbf{M}_{i}(2,2r-t,c_2,2)$
coincide with $f$.

Since $H_t\times G$ is smooth and irreducible of dimension 
\begin{eqnarray*}
2\ell(Z_t)+2(h^0(\mathbb{P}^2,\mathcal{O}_{\mathbb{P}^2}(r-s))-2),
\end{eqnarray*}
and $W^{-}_t$ is a vector bundle over $H_t\times G$, it follows that $\Sigma^-_{i,t}:=\mathbb{P}(W^-_t)$ is irreducible of dimension
\begin{eqnarray*}
&&2\ell(Z_t)+2(h^0(\mathbb{P}^2,\mathcal{O}_{\mathbb{P}^2}(r-s))-2)+\dim \text{Ext}^1((\mathcal{O}_{\mathbb{P}^2}(r+s-t)\otimes I_{Z_t},0),(\mathcal{O}_{\mathbb{P}^2}(r-s)),V)) -1\\
&=&
3c_2-3r^2+s^2+3s+2h^0(\mathbb{P}^2,\mathcal{O}_{\mathbb{P}^2}(r-s))-6+\frac{t}{2}(6r-2s-3-t).
    \end{eqnarray*}
\end{proof}

\end{document}